\tikzstyle{node} = [rectangle, rounded corners, minimum width=2cm, minimum height=1cm, text centered, draw=black, fill=blue!10]
\tikzstyle{hidden} = [draw=none, fill=none] 
\tikzstyle{double arrow} = [thick, double distance=2pt, -{Latex[round]}]
\newcommand*\ie{i.\kern.1em e.\ }
\newcommand*\eg{e.\kern.1em g.\ }
\newcommand*\cf{c.\kern.1em f.\ }
\theoremstyle{plain}
\newtheorem{theorem}{Theorem}[section]
\newtheorem{lemma}[theorem]{Lemma}
\newtheorem{proposition}[theorem]{Proposition}
\newtheorem{claim}[theorem]{Claim}
\newtheorem{corollary}[theorem]{Corollary}
\newtheorem{question}[theorem]{Question}
\newtheorem{observation}[theorem]{Observation}
\crefname{claim}{Claim}{Claims}
\theoremstyle{definition}
\newtheorem{definition}[theorem]{Definition}
\newtheorem{remark}{Remark}
\newtheorem{example}{Example}
\theoremstyle{plain}
\newcommand{\ignore}[1]{}
\newcommand{\dist}{\mathsf{dist}}
\newcommand{\Cov}[2]{\mathrm{Cov} \left[ #1, #2 \right]}
\newcommand{\Covs}[3]{\mathrm{Cov}_{#1}\left[ #2, #3 \right]}
\newcommand{\Var}[1]{\mathrm{Var} \left[ #1 \right]}
\newcommand{\Vars}[2]{{\mathrm{Var}}_{#1} \left[ #2 \right]}
\newcommand{\Ex}[1]{\bE \left[ #1 \right]}
\newcommand{\Exu}[2]{\underset{#1} \bE \left[ #2 \right] }
\renewcommand{\Pr}[1]{\bP \left[ #1 \right]} 
\newcommand{\define}{\vcentcolon=}
\newcommand{\floor}[1]{\ensuremath{\lfloor #1 \rfloor}}
\DeclarePairedDelimiter{\abs}{\lvert}{\rvert}
\newcommand{\ind}[1]{\mathds{1} \left[ #1 \right] }
\newcommand{\zo}{\{0,1\}}
\newcommand{\R}{\,\mathsf{R}}
\newcommand{\cE}{\ensuremath{\mathcal{E}}}
\newcommand{\bE}{\ensuremath{\mathbb{E}}}
\newcommand{\bN}{\ensuremath{\mathbb{N}}}
\newcommand{\bP}{\ensuremath{\mathbb{P}}}
\newcommand{\bR}{\ensuremath{\mathbb{R}}}
\newcommand{\inp}[2]{\left\langle #1, #2 \right\rangle}
\newcommand{\grad}{\nabla}
\newcommand{\mono}{\mathsf{mono}}
\newcommand{\mix}{\mathsf{mix}}
\newcommand{\lap}{\mathscr{L}}
\newcommand{\dir}{\mathcal{E}} 
\DeclareMathOperator{\vol}{vol}
\renewcommand{\epsilon}{\varepsilon}
\title{On the Spectral Expansion of Monotone Subsets of the Hypercube}
\newcommand{\email}[1]{\href{mailto:#1}{\texttt{#1}}}
\author{}
\author{Yumou Fei\thanks{Department of EECS, Massachusetts Institute of Technology. \email{yumou415@mit.edu}}
\and Renato Ferreira Pinto Jr.\thanks{University of Waterloo. \email{renato.ferreira@uwaterloo.ca}}}
\date{}
\begin{document}
\maketitle
\begin{abstract}
We study the spectral gap of subgraphs of the hypercube induced by monotone subsets of vertices. For a monotone subset $A\subseteq\{0,1\}^{n}$ of density $\mu(A)$, the previous best lower bound on the spectral gap, due to Cohen \cite{cohen2016problems}, was $\gamma\gtrsim \mu(A)/n^{2}$, improving upon the earlier bound $\gamma\gtrsim \mu(A)^{2}/n^{2}$ established by Ding and Mossel \cite{ding2014mixing}. In this paper, we prove the optimal lower bound $\gamma\gtrsim \mu(A)/n$. As a corollary, we improve the mixing time upper bound of the random walk on constant-density monotone sets from $O(n^{3})$, as shown by Ding and Mossel, to $O(n^{2})$. Along the way, we develop two new inequalities that may be of independent interest: (1)~a directed $L^{2}$-Poincar\'{e} inequality on the hypercube, and (2)~an ``approximate'' FKG inequality for monotone sets.
\end{abstract}

\thispagestyle{empty}
\setcounter{page}{0}
\newpage
{
\setcounter{tocdepth}{2} 
\tableofcontents
}
\thispagestyle{empty}
\setcounter{page}{0}
\newpage
\setcounter{page}{1}

\section{Introduction}

Suppose $G = (V, E)$ is a good expander graph, so that a random walk on the vertices of $G$ is
\emph{fast mixing}, \ie converges quickly to its stationary distribution. When is a random walk on a
subgraph of $G$ also fast mixing? More precisely, what kinds of subgraph restrictions preserve good
expansion?

This paper studies the case of the hypercube graph $H_n$, where vertices $x, y \in \zo^n$ are
connected by an edge if and only if they differ in exactly one coordinate. Recall that the
\emph{lazy random walk} on $H_n$ has mixing time $\Theta(n \log n)$. Given a subset $A
\subseteq \zo^n$ of vertices, we consider the random walk on $\zo^n$
\emph{censored} to $A$.

\begin{definition}[Censored random walk, \cite{ding2014mixing}]
    Given $A \subseteq \zo^n$, the \emph{random walk on $\zo^n$ censored to}
    $A$ is defined as follows. On state $x \in A$, sample $i \in [n]$ uniformly
    at random and let $y$ be the vertex obtained by flipping the $i$-th bit of
    $x$. Then
    \begin{enumerate}
        \item If $y \in A$, flip a coin and either stay at $x$ or move to $y$ (each with probability $1/2$).
        \item If $y \not\in A$, stay at $x$ (in which case we call this a \emph{censored} step).
    \end{enumerate}
\end{definition}

Without further guarantees on $A$, the censored random walk may mix well or
extremely poorly even when $A$ is large and connected, as the following two
examples illustrate:

\begin{example}[Subcube]
    Let $S \subset [n]$ be a set of indices, and let $A$ be the subcube given by
    the vertices $x \in \zo^n$ satisfying $x_i = 0$ for all $i \in S$. Then the
    censored random walk is essentially a random walk on the smaller cube
    $\zo^{n'}$, where $n' \define n - |S|$, except that only an
    $O(n'/n)$-fraction of the transitions are not censored. Thus the
    censored random walk has mixing time $O(\tfrac{n}{n'} \cdot n' \log n')
    = O(n \log n)$.
\end{example}

\begin{example}[Middle slice bridge]\label{ex:middle-slice}
    Let $x^* \in \zo^n$ be an arbitrary vertex with Hamming weight $|x^*| =
    \floor{n/2}$, and consider the set $A \define \left\{ x \in \zo^n : |x| \ne
    \floor{n/2} \right\} \cup \{ x^* \}$. A spectral argument shows that the
    mixing time of the censored random walk is exponential in $n$.
\end{example}

Thus, it is natural to ask: what properties of $A$ ensure fast mixing? In
\cite{ding2014mixing}, Ding \& Mossel initiated the study of random walks
censored to \emph{monotone} sets $A$\footnote{A set $A$ is called monotone if $x
\in A$ implies $y \in A$ whenever $x \preceq y$, where the latter denotes the
natural partial order on the hypercube: $x \preceq y$ if $x_i \le y_i$ for every
$i \in [n]$.} and showed that, when $A$ is not too small, monotonicity implies
fast mixing. Concretely, letting $\mu$ denote the uniform distribution on
$\zo^n$, they proved
\begin{theorem}[{\cite[Corollary 1.2]{ding2014mixing}}]
    \label{thm:ding-mossel}
    Let $A \subseteq \zo^n$ be a non-empty monotone set. Then the random walk on
    $\zo^n$ censored to $A$ has mixing time
    \[
        t_{\mix}\leq 512\cdot 
        \left(\frac{n}{\mu(A)}\right)^{2}\log(4\cdot 2^{n}\mu(A)).    
    \]
\end{theorem}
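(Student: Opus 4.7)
I would prove the theorem in two steps: (i) a standard reduction of the mixing time to the spectral gap $\gamma$ of the censored chain, and (ii) a spectral gap lower bound $\gamma \gtrsim (\mu(A)/n)^{2}$ via canonical paths. For (i), observe that the censored walk is reversible with respect to the uniform distribution $\pi$ on $A$, with $\pi_{\min} = 1/(2^{n}\mu(A))$; the holding probability is at least $1/2$, so the absolute spectral gap equals $\gamma$. The standard reversible-chain estimate
\[
t_{\mix}\ \le\ \frac{1}{\gamma}\log\!\left(\frac{4}{\pi_{\min}}\right)\ =\ \frac{1}{\gamma}\log\!\bigl(4\cdot 2^{n}\mu(A)\bigr)
\]
then reduces the theorem to the spectral gap bound, with the constant $512$ emerging by tracking prefactors.

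For (ii), I would use the canonical-paths (Poincar\'{e}) method, exploiting the following consequence of upward closure: if $x, y \in A$ then $x\vee y \in A$, and every vertex on any monotone path from $x$ up to $x\vee y$ (resp.\ from $x\vee y$ down to $y$) lies in $A$, since such a vertex $w$ satisfies $x\le w\le x\vee y$ (resp.\ $y\le w\le x\vee y$). Concretely, for each $(x,y)\in A\times A$ and each permutation $\sigma\in S_{n}$, define the canonical path $\gamma_{x,y,\sigma}$ that first flips the bits of $x\triangle(x\vee y)$ in the order prescribed by $\sigma$ (up-phase), and then flips the bits of $y\triangle(x\vee y)$ in the order prescribed by $\sigma$ (down-phase). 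Averaging over $\sigma$ uniform on $S_{n}$ yields a canonical flow, and the Poincar\'{e} inequality gives $\gamma\ge 1/\rho$ with
\[
\rho\ =\ \max_{e}\,\frac{1}{Q(e)}\,\Exs{\sigma}{\sum_{(x,y):\,e\in\gamma_{x,y,\sigma}}\pi(x)\pi(y)\,|\gamma_{x,y,\sigma}|},
\]
where $Q(e) = 1/(2n|A|)$ and $|\gamma_{x,y,\sigma}|\le 2n$.

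The main technical obstacle is showing $\rho = O((n/\mu(A))^{2})$. Fix an edge $e = (u, u\oplus e_{i})$ traversed in the up-phase, say. A pair $(x,y)$ passes through $e$ exactly when $x\le u$, $(x\vee y)\ge u\oplus e_{i}$, and $\sigma$ schedules the bits of $\{j : u_{j}=1,\,x_{j}=0\}$ before coordinate $i$ among the up-phase bits. The randomization by $\sigma$ contributes a typical factor of $\Theta(1/n)$ (one over the number of up-bits). The combinatorial heart of the argument is to bound the number of eligible pairs $(x,y)\in A\times A$: in the worst case this count can be as large as $|A|^{2}/\mu(A)$ for edges near the ``top'' of $A$, because the upper set of $u\oplus e_{i}$ within $A$ may encompass a $\mu(A)^{-1}$ fraction of all of $A$. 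Combining $Q(e)^{-1} = 2n|A|$, path length $O(n)$, the $\sigma$-randomization factor $O(1/n)$, and the pair count $O(|A|^{2}/\mu(A))$ normalized by $|A|^{2}$ yields the desired $\rho = O(n^{2}/\mu(A)^{2})$. Verifying this congestion bound uniformly over all edges---and carefully ruling out a worse $\mu(A)^{-2}$ blow-up---is the principal technical hurdle, and is exactly the step that the present paper eventually bypasses in order to improve the spectral gap to $\mu(A)/n$.
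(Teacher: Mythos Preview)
This statement is quoted from \cite{ding2014mixing}; the present paper does not prove it but sketches the original argument in its proof overview. That argument is \emph{not} a canonical-path argument: Ding--Mossel lower bound the bottleneck ratio directly, using the directed edge-isoperimetric inequality of \cite{GGLRS00} together with the FKG inequality to show that for every $S\subseteq A$ one of $S$, $A\setminus S$ has many upward boundary edges into the other, which yields $\phi(H_A)\gtrsim \mu(A)/n$; Cheeger's inequality then gives $\gamma\gtrsim \mu(A)^2/n^2$, and both the factor $512$ and the quadratic loss in $\mu(A)$ come from that Cheeger step.

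Your canonical-path route has a genuine gap. Routing every pair through the apex $x\vee y$ produces \emph{exponential} congestion at edges near $1^n$, already for $A=\{0,1\}^n$. For the edge $e=\{1^{n-1}0,\,1^n\}$, a pair $(x,y)$ can use $e$ in the up-phase only if $x_n=0$ and $x\vee y=1^n$; summing over such pairs with the $\sigma$-averaging factor $1/|\{j:x_j=0\}|$ gives
\[
\sum_{x:\,x_n=0}\frac{2^{|x|}}{n-|x|}
\;=\;\sum_{k=1}^{n}\binom{n-1}{k-1}\frac{2^{\,n-k}}{k}
\;=\;\frac{3^n-2^n}{n},
\]
so the congestion at $e$ satisfies $\rho_e\gtrsim n(3/2)^n$ rather than $\mathrm{poly}(n)$. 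Your bookkeeping also does not close: the ``pair count $O(|A|^2/\mu(A))$'' is at least $|A|^2$ and hence vacuous, and in any case $2n|A|\cdot n\cdot(1/n)\cdot(|A|^2/\mu(A))/|A|^2=2n|A|/\mu(A)=2n\cdot 2^n$, not $n^2/\mu(A)^2$. A canonical-path proof is possible---the paper credits \cite{cohen2016problems} with one, obtaining the stronger $\gamma\gtrsim\mu(A)/n^2$---but it requires a different, less top-heavy flow than the naive via-$x\vee y$ construction; you would need to specify and analyze that flow to make the plan work.
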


When the density $\mu(A)$ is a constant, the above implies a mixing time bound of $O(n^3)$. In particular, for the uncensored special case $A = H_n$, this result only yields an upper
bound of $O(n^3)$ on the mixing time, versus the optimal $\Theta(n \log n)$; this suggests the
potential for improving upon \cref{thm:ding-mossel}, and indeed \cite{ding2014mixing} asked the following question.

\begin{question}[{\cite[Question 1.1]{ding2014mixing}}]\label{q:n-logn-mixing}
Suppose $\mu(A)\geq \varepsilon$ for some constant $\varepsilon>0$. Is it true that $t_{\mix}\leq O_{\varepsilon}(n\log n)$?
\end{question}

Our main result makes progress on this question by showing an $O(n^{2})$ mixing time bound for monotone sets $A$ of constant density.

\begin{theorem}
    \label{thm:intro-mixing-time}
    Let $A \subseteq \zo^n$ be a non-empty monotone set. Then the random walk on
    $\zo^n$ censored to $A$ has mixing time
    \[
        t_{\mix} \leq \frac{2n}{\mu(A)}\cdot  \log(4\cdot 2^n \mu(A)) \,.
    \]
\end{theorem}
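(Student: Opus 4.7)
My plan is to deduce Theorem~\ref{thm:intro-mixing-time} as an immediate corollary of the paper's main spectral gap estimate, $\gamma \gtrsim \mu(A)/n$, together with the standard mixing-time bound for lazy reversible Markov chains.

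First, I would observe that the censored random walk is lazy and reversible with respect to the uniform distribution $\pi$ on $A$: laziness holds because on any uncensored step the walk tosses an independent fair coin, so the transition operator has non-negative eigenvalues; reversibility with uniform stationarity is immediate from the symmetry of the transition probabilities; and the minimum stationary probability is $\pi_{\min} = 1/|A| = 1/(2^n \mu(A))$. For any such chain with spectral gap $\gamma$, a standard $\ell^{2}$-to-TV argument gives
\[
    t_{\mix}(\varepsilon) \;\leq\; \frac{1}{\gamma} \log \frac{1}{\varepsilon\, \pi_{\min}}.
\]
Setting $\varepsilon = 1/4$ and substituting the spectral gap bound (with the appropriate implicit constant $\gamma \geq \mu(A)/(2n)$) yields $t_{\mix} \leq (2n/\mu(A)) \cdot \log(4 \cdot 2^n \mu(A))$, matching the statement.

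The entire content of the theorem therefore reduces to proving the spectral gap bound $\gamma \geq \mu(A)/(2n)$, and that is where the real difficulty lies. I would approach it by lower-bounding the Rayleigh quotient $\dir(f,f)/\Var{f}$ for an arbitrary $f : A \to \bR$. A naive application of the hypercube Poincar\'e inequality does not suffice, since only edges with \emph{both} endpoints in $A$ contribute to the Dirichlet form of the censored walk, and an adversarial monotone $A$ can make a sizeable fraction of the hypercube edges incident to $A$ be censored. Following the strategy announced in the abstract, I would: (i)~use a \emph{directed $L^2$-Poincar\'e inequality} on $\zo^{n}$ to bound the variance of $f$ in terms of one-sided (upward) differences along each coordinate, rather than the symmetric differences appearing in the usual Poincar\'e inequality; and (ii)~use an \emph{approximate FKG inequality} to show that, for a monotone set $A$, the upward differences summed over $A$ are comparable, up to a factor of $\mu(A)$, to the Dirichlet-form contributions from edges whose endpoints both lie in $A$. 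Quantifying the loss in this approximate FKG step sharply enough to obtain $\gamma \gtrsim \mu(A)/n$, rather than the weaker $\mu(A)/n^{2}$ of \cite{cohen2016problems} or $\mu(A)^{2}/n^{2}$ of \cite{ding2014mixing}, is the principal obstacle I would expect.
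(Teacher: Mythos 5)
Your reduction is exactly what the paper does: it invokes \cite[Theorem 12.4]{levin2017markov} to pass from \cref{thm:main} to the stated mixing time, and your constant $\gamma\geq\mu(A)/(2n)$ follows from the paper's sharper bound $\gamma(H_A)\geq(1-\sqrt{1-\mu(A)})/n$ via the elementary inequality $1-\sqrt{1-\mu}\geq\mu/2$. So for the mixing-time theorem itself the proposal is correct and takes the same one-line route.

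One cautionary note on your sketch of the spectral gap proof, since it slightly misattributes where each ingredient enters. Step~(ii) as you state it\textemdash relating the upward differences over $A$ to the Dirichlet-form contributions from edges inside $A$, with a $\mu(A)$ loss\textemdash is not the approximate FKG inequality. That comparison is an \emph{exact} identity coming from the domain-extension operator $T$, namely $\mu(A)\cdot\cE_{A}(f)=\cE^{-}(T[f])+\cE^{-}(T[-f])$ (\cref{prop:Undirected-A-to-directed-full}), and it carries the $\mu(A)$ factor by a simple density argument. The approximate FKG inequality (\cref{thm:approximate-FKG}) plays a different and subtler role: after the directed $L^2$-Poincar\'e inequality and domain extension produce monotone increasing $g,h:A\to\bR$ with $f$ close to $g$ and $-f$ close to $h$, the lower bound $\delta(A)\geq-\sqrt{1-\mu(A)}$ on the correlation of $g$ and $h$, combined with the triangle-type estimate of \cref{prop:trigonometry}, is what forces $\Vars{A}{f}$ to be controlled by those two distances. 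Without that step you could not get the variance into the picture at all, and the exponent on $\mu(A)$ would not come out right.
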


\subsection{Spectral gap}

It is well-known that the mixing time of a Markov chain is related to the \emph{spectral gap} of its generator (see e.g. \cite{levin2017markov}), or equivalently the \emph{spectral expansion} of the underlying graph. The Poincar\'{e} inequality (see e.g. \cite{o2014analysis}) for the hypercube states that the spectral expansion of the (lazy) hypercube $H_{n}$ is exactly $1/n$, which implies the (non-tight) mixing time bound $O(n^{2})$ for the lazy random walk on $H_{n}$.

Thus, a natural question related to mixing under censoring is the \emph{robustness} of the classical Poincaré inequality under vertex removal from $H_{n}$. Specifically, does the spectral expansion of $H_{n}$ remain on the order of $1/n$ if only a small fraction of the vertices are removed, or does it exhibit a significant deviation? \Cref{ex:middle-slice} demonstrates that the spectral expansion can shrink to exponentially small values if the removed set is arbitrary. Our goal is to show that when the removed set of vertices is monotone (and not too large), the spectral expansion remains at least on the order of $1/n$.

For the purpose of clearer comparison with the classical Poincar\'{e} inequality, we introduce the Dirichlet form of a function on $A$. Note that in the case $A=H_{n}$, the following definition is exactly the ``influence'' \cite[Definition 2.27]{o2014analysis} of the function $f:\{0,1\}^{n}\rightarrow\bR$. 

\begin{definition}
Fix a monotone set $A\subseteq \{0,1\}^{n}$. For all $f:A\rightarrow\bR$, we define\footnote{
The energy functional $\cE_{A}(\cdot)$ defined here differs from the standard Dirichlet form of the censored random walk on $A$ by a factor of $n$. This normalization is chosen to better align with the notion of total influence in the analysis of Boolean functions.
}
\[
\cE_{A}(f)\define \frac{1}{4}\cdot \Exu{x\in A}{\sum_{i=1}^{n}\left(f(x)-f(x^{\oplus i})\right)^{2}\cdot\ind{x^{\oplus i}\in A}}.
\]
Here $x^{\oplus i}$ denotes the binary string obtained by flipping the $i$-th bit of $x$.
\end{definition}

We can now state our ``robust'' version of the Poincar\'{e} inequality.

\begin{theorem}\label{thm:main}
Let $A\subseteq \{0,1\}^{n}$ be a non-empty monotone set. We have for all $f:A\rightarrow\bR$
\[
\Vars{A}{f}\leq\frac{1}{1-\sqrt{1-\mu(A)}}\cdot\cE_{A}(f).
\]
Here $\Vars{A}{f}$ stands for the variance of $f(x)$ where $x$ is a uniformly random element of $A$.
\end{theorem}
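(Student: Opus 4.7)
The strategy is to leverage the two new inequalities announced in the abstract: a directed $L^{2}$-Poincar\'{e} inequality on the hypercube and an approximate FKG inequality for monotone sets.

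The most natural starting move is to extend $f:A\to\bR$ to a function $g:\{0,1\}^{n}\to\bR$ on the full cube (for instance by setting $g\equiv 0$ on $\bar A$, after normalizing $\Exs{A}{f}=0$, so that $\Var{g}=\mu(A)\cdot\Vars{A}{f}$) and then invoke a Poincar\'{e}-type inequality for $g$. The crucial geometric property that monotonicity of $A$ provides is that any boundary edge of $A$ is oriented: if $x\in A$ and $x^{\oplus i}\notin A$, then necessarily $x^{\oplus i}\prec x$, because flipping a $0$-bit to a $1$-bit cannot exit a monotone set. This directional structure is precisely what the directed $L^{2}$-Poincar\'{e} inequality is built to exploit: it should charge only the positive parts $(\partial_{i}g)_{+}$ of the discrete derivatives, which in our setting are tightly controlled by the interior geometry of $A$ and the values of $f$ on its lower boundary.

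After applying the directed Poincar\'{e} to the extension of $f$, one expects the right-hand side to split into an ``interior'' contribution proportional to $\cE_{A}(f)$ and a ``boundary'' contribution involving the downward-exit degrees $d(x)\vcentcolon=|\{i:x^{\oplus i}\notin A\}|$, together with the values of $f$ there. To close the inequality one must then control the boundary term, and this is where the approximate FKG inequality enters: it should let us decorrelate $f^{2}$ from the ``loosely anti-monotone'' quantity $d$ on $A$, extracting exactly a factor of $\sqrt{1-\mu(A)}$. The final estimate should assemble into an additive form
\[
\Vars{A}{f}-\cE_{A}(f)\;\leq\;\sqrt{1-\mu(A)}\cdot\Vars{A}{f},
\]
which rearranges — using $(1-\sqrt{1-\mu(A)})^{-1}=(1+\sqrt{1-\mu(A)})/\mu(A)$ — to the statement of \cref{thm:main}.

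The main obstacle should be calibrating the approximate FKG estimate so that exactly the right $\sqrt{1-\mu(A)}$ factor emerges. Classical FKG provides positive correlation in the ``wrong direction'' for this purpose (a lower bound on expected products), so the approximate version must be a genuinely new quantitative reverse estimate tailored to monotone sets — presumably established via a careful inductive or slicing argument on the monotone structure of $A$, or via conditioning on a single coordinate in a way that preserves monotonicity. This is what I expect to be the technical heart of the proof; a direct application of classical Poincar\'{e} with the $g\equiv 0$ extension only yields $\Vars{A}{f}\leq\cE_{A}(f)+\tfrac{1}{2}\Exs{A}{f^{2}\cdot d}$, which is too lossy to conclude without the improved directed estimate and the FKG-type decorrelation.
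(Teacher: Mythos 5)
Your high-level instincts are correct — the paper does combine a directed $L^{2}$-Poincar\'{e} inequality with an approximate FKG inequality, and your target rearrangement $\Vars{A}{f}-\cE_{A}(f)\leq\sqrt{1-\mu(A)}\cdot\Vars{A}{f}$ is algebraically equivalent to the theorem. But the concrete plan you describe has several gaps that would not be fixable without the key structural ideas you are missing.

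First, the $g\equiv 0$ extension is the wrong choice, and this is not a cosmetic issue. With that extension the boundary contribution to either $\cE(g)$ or to the directed energy $\cE^{-}(g)$ involves $\Exs{A}{f^{2}d}$ (or its one-sided variant), and this quantity is not controlled by $\sqrt{1-\mu(A)}\cdot\Vars{A}{f}$: for instance, on $A=\{x:|x|\geq n/2\}$ the typical escape degree on the boundary layer is $\Theta(n)$, so $\Exs{A}{d}=\Theta(\sqrt{n})$, far exceeding $2\sqrt{1-\mu(A)}$. The paper instead extends $f$ by the constant $\min_{y\in A}f(y)$ outside $A$ (its operator $T$). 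Because the extension is the smallest value and sits at the bottom, every boundary edge becomes \emph{monotone} for $T[f]$ and contributes nothing to $\cE^{-}(T[f])$; one gets the exact identity $\mu(A)\cdot\cE_{A}(f)=\cE^{-}(T[f])+\cE^{-}(T[-f])$, with no leakage. This identity — splitting the Dirichlet form of $f$ on $A$ into upward violations of $T[f]$ and of $T[-f]$ — is the engine of the reduction, and it is absent from your plan.

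Second, even after applying the directed Poincar\'{e}, its left-hand side is $\dist_{2}^{\mono}(g)^{2}$, not $\Var{g}$. You do not explain how to convert a bound on distance-to-monotonicity into a bound on variance, and that conversion is precisely where the approximate FKG inequality enters — but not in the way you propose. You suggest using it to decorrelate $f^{2}$ from $d$, but $f^{2}$ is not monotone (nor is $d$), so the FKG framework does not apply to that pair, and in any case there is no factor of $\sqrt{1-\mu(A)}$ hiding in $\Exs{A}{f^{2}d}$ to be extracted. What the paper actually does is: apply the directed Poincar\'{e} to $T[f]$ and to $T[-f]$ to produce monotone increasing functions $g,h:A\to\bR$ with $\|f-g\|_{2}^{2}+\|-f-h\|_{2}^{2}\leq\cE_{A}(f)$. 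It then observes that $f$ is (correlation-)close to $g$ and to $-h$, and that $g$ and $-h$ are an increasing/decreasing pair. The approximate FKG inequality bounds the correlation $\rho(g,h)\geq\delta(A)\geq-\sqrt{1-\mu(A)}$, and a small ``trigonometric'' lemma on the Gram matrix of $(f,g,-h)$ (positive semi-definiteness of a $3\times 3$ correlation matrix) converts this into $\cE_{A}(f)\geq(1+\delta(A))\Vars{A}{f}$.

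Finally, your characterization of the approximate FKG inequality as a ``reverse estimate'' misreads its role: it is not a reverse of FKG but a \emph{weaker} form of FKG for the restricted poset $A$, where the classical inequality (nonnegative correlation of increasing functions) genuinely fails. The paper's Theorem~2.3 shows that the correlation ratio of two increasing functions on $A$ can dip below $0$ but not below $-\sqrt{1-\mu(A)}$, and the proof passes through a surprisingly nontrivial measure-theoretic inequality on push-forward measures — far from a simple induction or slicing argument. So while you named the two ingredients, the mechanism by which they combine — the $\min$-extension identity, applying the directed inequality to both $T[f]$ and $T[-f]$, and the correlation-matrix argument mediated by $\delta(A)$ — is the actual content, and your plan as written does not reach it.
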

Note that in the case $A=H_{n}$, the above theorem recovers the Poincar\'{e} inequality on the
hypercube. We remark that \Cref{thm:intro-mixing-time} follows directly from \Cref{thm:main} due to
standard Markov chain theory (e.g. \cite[Theorem 12.4]{levin2017markov}), so the rest of the paper focuses mainly on proving \Cref{thm:main}. 

\Cref{thm:main} can also be stated as a lower bound on the spectral gap of $H_{A}$\textemdash the
subgraph of $H_n$ induced by $A$, with a self-loop added to vertex $x$ for each edge $\{x,y\}$ of the hypercube with
$x \in A$ and $y \not\in A$ (which counts as $1$ toward the degree of $x$). For convenience of reference, in this paper we \emph{define} the spectral gap using the language of \Cref{thm:main}.

\begin{definition}
Let $A\subseteq \zo^{n}$ be a monotone set with at least 2 elements. We define
$$\gamma(H_{A})\define \frac{1}{n}\cdot\inf_{f\not\in \mathsf{const}_{A}}\frac{\cE_{A}(f)}{\Vars{A}{f}},$$
where $f$ ranges over all non-constant functions from $A$ to $\bR$.
\end{definition}

Now \Cref{thm:main} can be stated as $\gamma(H_{A})\geq \frac{1}{n}\left(1-\sqrt{1-\mu(A)}\right)\gtrsim \mu(A)/n$, for monotone sets $A$ with $|A|\geq 2$.
\subsection{Proof overview: previous work}
\label{subsec:overview-previous}

We begin by briefly describing the proof of \Cref{thm:ding-mossel} in \cite{ding2014mixing}. The proof in \cite{ding2014mixing} also analyzes the spectral expansion of $H_{A}$, achieving the lower bound $\gamma(H_{A})\gtrsim \mu(A)^{2}/n^{2}$. 

By Cheeger's inequality, to obtain a lower bound on $\gamma(H_{A})$, it suffices to lower bound the \emph{bottleneck ratio}
$$\phi(H_{A})\define \min_{S\subseteq A}\frac{|E(S,A\setminus S)|}{\min\{|S|,|A\setminus S|\}},$$
where $E(S,A\setminus S)$ denotes the set of edges $\{x,y\}$ of $H_{n}$ with $x\in S$ and $y\in A\setminus S$. By an isoperimetric inequality of the hypercube, there is good lower bound on the number of boundary edges connecting a vertex in $S$ to a vertex in $\{0,1\}^{n}\setminus S$. However, it is not immediately clear how many of these edges actually lead to $A\setminus S$. The crucial observation of \cite{ding2014mixing} is that if an edge from a vertex $x\in S$ goes ``upward''\textemdash that is, if its other endpoint $y\in \{0,1\}^{n}\setminus S$ satisfies $y\succeq x$\textemdash then by the monotonicity of $A$, we must have $y\in A\setminus S$.

Coincidentally, there is a ``directed isopetrimetric inequality'' \cite[Theorem 2]{GGLRS00}, developed by the property testing community, which provides a lower bound on exactly the number of such \emph{upward} boundary edges. Specifically, it gives a lower bound on the number of edges connecting a vertex $x\in S$ to a vertex $y\in \{0,1\}^{n}\setminus S$ with $y\succeq x$ in terms of the \emph{distance} of $S$ to monotonicity. The precise notion of distance is less important than the key fact that, when $|A|$ is not too small, at least one of $S$ and $A\setminus S$ must be far from any monotone set\textemdash i.e., it must have a large distance to monotonicity\textemdash due to the FKG inequality \cite{fortuin1971correlation}. As a result, we can lower bound the number of upward boundary edges from either $S$ or $A\setminus S$. Since both sets of upward edges are subsets $E(S,A\setminus S)$, we can thus arrive at a lower bound on $|E(S,A\setminus S)|$ and hence $\phi(H_{A})$.

The work \cite{ding2014mixing} actually achieves the optimal bound $\phi(H_{A})\gtrsim \mu(A)/n$ on the bottleneck ratio. However, this only translates to a quadratically worse bound $\gamma(H_{A})\gtrsim \mu(A)^{2}/n^{2}$ for the spectral expansion, due to the loss incurred by applying Cheeger's inequality. One natural idea is to avoid using Cheeger's inequality by directly bounding the
spectral gap $\gamma(H_A)$. In this direction, \cite{cohen2016problems} used a
canonical path argument to show the bound $\gamma(H_{A})\gtrsim \mu(A)/n^{2}$, 
which improves upon \cref{thm:ding-mossel} by a factor of $\mu(A)$. However,
this improvement is only effective when $\mu(A) \ll 1$ and the dependence on $n$ remains suboptimal.

\subsection{Proof overview: our work}

At first glance, the proof in \cite{ding2014mixing}, as described in the previous subsection,
appears to heavily depend on the discrete nature of the bottleneck ratio. In our view, a key
conceptual contribution of this work is that the arguments in \cite{ding2014mixing} can be adapted
to the $L^{2}$ setting. While the discrete setting leads to the bottleneck ratio, in the $L^{2}$
setting, the corresponding arguments directly lead to the spectral expansion as stated in \Cref{thm:main}. For a full set of analogies, see \Cref{tab:analogy}. 

\begin{table}[h!]
\centering
\begin{tabular}{|c|c|}
\hline
\rowcolor{gray!30} \textbf{The discrete setting} & \textbf{The $L^{2}$ setting} \\ \hline
subset $S\subseteq A$ & function $f:A\rightarrow\mathbb{R}$ \\ \hline
the complement set $A\setminus S$ & the function $-f$ \\ \hline
$|E(S,A\setminus S)|$ & $\cE_{A}(f)$ \\ \hline
$\min\{|S|,|A\setminus S|\}$ & $\Vars{A}{f}$ \\ \hline
bottleneck ratio $\phi(H_{A})$ & spectral gap $\gamma(H_{A})$ \\ \hline
directed isoperimetric inequality \cite{GGLRS00} & directed $L^{2}$-Poincar\'{e} inequality (\Cref{thm:directed})\\ \hline
classical FKG inequality \cite{fortuin1971correlation} & approximate FKG inequality (\Cref{thm:approximate-FKG}) \\ \hline
\end{tabular}
\caption{The analogies between the discrete and $L^{2}$ settings}\label{tab:analogy}
\end{table}

In contrast to \cite{ding2014mixing}, where the two main inequalities used in the proof\textemdash the directed isoperimetric inequality from \cite{GGLRS00} and the FKG inequality from \cite{fortuin1971correlation}\textemdash are classical results, in our $L^{2}$ settings we have to formulate and prove new versions of these inequalities, which may be of interest on their own.

\paragraph*{Directed Poincaré inequality.}

As indicated in \Cref{subsec:overview-previous}, directed isoperimetric
inequalities aim to lower bound the number of ``upward boundary edges'' from a
set $S$ to $\{0,1\}^{n}\setminus S$ in terms of the ``distance'' of $S$ to
monotonicity; see \cref{sec:related-work} for more background.

For our application, we require a directed isoperimetric inequality in the $L^2$
setting, which is the setting associated with the classical Poincaré inequality
and the spectral gap. The first step is to define, for any
$f:\{0,1\}^{n}\rightarrow\mathbb{R}$, its $L^{2}$-distance to monotonicity and
its ``upward boundary edges''.

\begin{definition}[Distance to monotonicity]
For a function $f:\zo^{n}\rightarrow\bR$, we define
\[
\dist_{2}^{\mono}(f)\define \inf_{g\in \mono}\sqrt{\Exu{x\in\{0,1\}^{n}}{\big(f(x)-g(x)\big)^{2}}},
\]
where $g$ ranges over all monotone increasing functions from $\{0,1\}^{n}$ to $\bR$.
\end{definition}

\begin{definition}[Upward boundary]\label{def:upward-boundary}
For all $f:\{0,1\}^{n}\rightarrow\bR$, we define
\[
\cE^{-}(f)\define \frac{1}{4}\cdot \Exu{x\in \{0,1\}^{n}}{\sum_{i=1}^{n}\min\left\{0,f(x^{i\rightarrow 1})-f(x^{i\rightarrow 0})\right\}^{2}}.
\]
Here for $x\in\zo^{n}$ and $b\in \zo$, $x^{i\rightarrow b}$ stands for the string $(x_{1},\dots,x_{i-1},b,x_{i+1},\dots,x_{n})$.
\end{definition}

We are now ready to state our directed $L^{2}$-Poincar\'{e} inequality.
\begin{theorem}[Directed Poincar\'{e} inequality]\label{thm:directed}
For all functions $f:\{0,1\}^{n}\rightarrow\bR$, we have 
\[
\dist_{2}^{\mono}(f)^{2}\leq \cE^{-}(f).
\]
\end{theorem}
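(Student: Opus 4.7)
The plan is to exhibit a monotone function $g$ with $\|f - g\|_{2}^{2} \leq \cE^{-}(f)$; since $g$ is monotone, this gives $\dist_{2}^{\mono}(f)^{2} \leq \|f - g\|_{2}^{2} \leq \cE^{-}(f)$. For each coordinate $i \in [n]$, I would define the pair-averaging operator $A_{i}$: for every pair $\{x^{i \to 0}, x^{i \to 1}\}$ with $h(x^{i \to 0}) > h(x^{i \to 1})$ (a direction-$i$ \emph{violation}), $A_{i} h$ replaces both values by their arithmetic mean; non-violating pairs are unchanged.
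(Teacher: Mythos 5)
Your proposal is a plan, not a proof: it introduces the averaging operator $A_i$ and states the goal, but stops before any argument. Let me assess the approach on its own terms and compare it with the paper's proof, because the missing steps are exactly where the difficulty lies.

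Your operator $A_i$ is the $L^2$-projection onto the convex set of functions with no direction-$i$ violations, and it is the discrete analogue of the ``switch operator'' $S_i$ of \cite{GGLRS00}. Two claims would be needed to finish. \textbf{(a)} For $j\ne i$, applying $A_i$ does not increase the direction-$j$ energy $\dir^{(j)}$; together with $\dir^{(i)}(A_i h)=0$, this would imply that a single pass $g \define A_n \cdots A_1 f$ is monotone. This is a nontrivial claim in its own right (it decomposes onto $2\times 2$ squares and then becomes a majorization/convexity statement), and it is precisely the discrete counterpart of the positive-correlation claim $\inp{\lap^{(i)}f}{\lap^{(j)}f}\ge 0$ that the paper proves via \cref{lemma:abcd}. \textbf{(b)} A bound $\|f - g\|_2^2 \le \dir^-(f)$. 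This is where the approach genuinely stalls. Writing $f_k \define A_k\cdots A_1 f$, each step has $\|f_k-f_{k-1}\|_2^2 = \dir^{(k)}(f_{k-1}) \le \dir^{(k)}(f)$, but summing via the triangle inequality only gives
\begin{equation*}
\|f - g\|_2 \le \sum_{k=1}^n \sqrt{\dir^{(k)}(f)} \le \sqrt{n}\,\sqrt{\dir^-(f)},
\end{equation*}
which is off by a factor of $n$ from the theorem (and in fact matches what one would get from \cref{cor:reverse-directed-Poincare-cube} rather than \cref{thm:directed}). The increments $f_{k-1}-f_k = -\lap^{(k)}f_{k-1}$ are \emph{not} pairwise orthogonal, and in fact they tend to be \emph{positively} correlated, so neither Pythagoras nor a naive cancellation argument rescues this. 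The paper sidesteps the issue by running a continuous-time heat flow: the spectral gap $\lambda^-(H_n)=1$ (whose proof is the positive-correlation claim, i.e.\ your claim (a)) gives exponential decay of $\dir^-(P_tf)$, and the calculus of \cref{lemma:exponential-decay-integral-sqrt} then bounds the total movement $\int_0^\infty \|\lap^- P_t f\|_2\,\odif t$ by $\sqrt{\dir^-(f)}$ with no dimension loss. The discrete one-pass bound $\|f-g\|_2^2\le\dir^-(f)$ may well be true (it holds in the small examples I tried), but it would require a sharper telescoping identity than the non-increase of $\dir^{(j)}$; as stated, your plan does not contain the ingredient that avoids the $\sqrt{n}$ loss, and this is the central obstacle.
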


\paragraph*{Approximate FKG inequality.}

The classical FKG inequality of \cite{fortuin1971correlation} states that if $f,g:\zo^{n}\rightarrow\bR$ are monotone increasing functions and $x$ is a uniformly random element of $\zo^{n}$, then the random variables $f(x)$ and $g(x)$ are nonnegatively correlated.  It is well-known that this statement holds for increasing functions over a broader class of partially ordered sets (posets). In our proof, we crucially need a lower bound on the correlation ratio of any two increasing functions $A\rightarrow\bR$, where the set $A$ is partially ordered by the natural partial order of the hypercube. However, it is easy to see that the FKG inequality does not generally hold on this poset. Thus, we seek an ``approximate'' version of the FKG inequality, where we are content with a correlation ratio bounded away from $-1$, rather than necessarily nonnegative.

\begin{definition}[Approximate FKG ratio]\label{def:FKG-ratio}
Fix a monotone set $A\subseteq\zo^{n}$ with at least 2 elements. We define the \emph{approximate FKG ratio} of the poset $A$ to be
$$\delta(A):=\min\left\{0,\inf_{f,g\in \mono _{A}\setminus \mathsf{const}_{A}}\frac{\Covs{A}{f}{g}}{\sqrt{\Vars{A}{f}\cdot\Vars{A}{g}}}\right\},$$
where $f$ and $g$ range over all non-constant monotone increasing functions from $A$ to $\bR$. Here, $\Covs{A}{f}{g}$ stands for the covariance of the random variable pair $(f(x),g(x))$ where $x$ is a uniformly random element of $A$.
\end{definition}

\begin{theorem}[Approximate FKG inequality]\label{thm:approximate-FKG}
For any monotone set $A\subseteq\{0,1\}^{n}$ with at least 2 elements, we have $\delta(A)\geq -\sqrt{1-\mu(A)}$.
\end{theorem}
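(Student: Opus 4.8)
The plan is to reduce the statement to the directed Poincaré inequality (\Cref{thm:directed}) applied to a cleverly chosen extension of $f$ and $g$ from $A$ to all of $\zo^n$. First I would observe that, by bilinearity of covariance and homogeneity of the target ratio, it suffices to bound $\Covs{A}{f}{g}$ from below for monotone $f,g:A\to\bR$ normalized so that $\Exu{x\in A}{f(x)} = \Exu{x\in A}{g(x)} = 0$, and then to show $\Covs{A}{f}{g} \geq -\sqrt{1-\mu(A)}\cdot\sqrt{\Vars{A}{f}\Vars{A}{g}}$. The key move is to extend each monotone function on $A$ to a monotone function on the whole cube: for $h:A\to\bR$ monotone, define $\bar h:\zo^n\to\bR$ by $\bar h(x) = h(x)$ for $x\in A$ and $\bar h(x) = \min_{y\in A,\, y\succeq x} h(y)$ for $x\notin A$ (using monotonicity of $A$, the set $\{y\in A: y\succeq x\}$ is nonempty as long as $A\ne\emptyset$ — here we may need to handle the all-ones vertex, which lies in every nonempty monotone set). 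This $\bar h$ is monotone on $\zo^n$ and agrees with $h$ on $A$. Crucially, on $\zo^n\setminus A$ the extension is constant along upward edges in the sense that $\cE^-(\bar h)$ only "sees" edge differences, and since $\bar h$ is monotone these are all $\ge 0$, so $\cE^-(\bar h) = 0$; hence $\dist_2^{\mono}(\bar h) = 0$ trivially. So \Cref{thm:directed} is not directly about $\bar h$ but about a \emph{perturbation} of it.

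The real idea should be to apply \Cref{thm:directed} to a function whose directed boundary energy $\cE^-$ is controlled by the \emph{variances} on $A$, and whose distance to monotonicity is bounded \emph{below} by the covariance deficit. Concretely, I would consider, for the normalized $f,g$, the function $\Phi = \bar f + \bar g$ or a combination like $\bar f - \lambda \bar g$ and track the identity relating $\Vars{A}{f\pm g}$ to $\Vars{A}{f} + \Vars{A}{g} \pm 2\Covs{A}{f}{g}$; the covariance appears precisely as the cross term. The aim is to show that if $\Covs{A}{f}{g}$ were too negative (more negative than $-\sqrt{1-\mu(A)}\sqrt{\Vars{A}{f}\Vars{A}{g}}$), then $f+g$ restricted to $A$, when extended to the cube in a way that is \emph{not} automatically monotone, would have $L^2$-distance to monotonicity exceeding its directed boundary energy, contradicting \Cref{thm:directed}. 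The factor $\sqrt{1-\mu(A)}$ strongly suggests the quantitative heart is a Cauchy–Schwarz step: the "error" introduced by restricting to $A$ rather than the full cube is supported on the complement $\zo^n\setminus A$, which has measure $1-\mu(A)$, and passing between an $L^2$ average over $A$ and an $L^2$ average over $\zo^n$ costs exactly a factor depending on $\mu(A)$. I would set up $\bar f, \bar g$ as the pointwise-minimal monotone extensions, note that extending $f$ and $g$ to the cube and taking the product measure decomposes $\Exu{x\in\zo^n}{\cdot}$ into a $\mu(A)$-weighted piece on $A$ and a $(1-\mu(A))$-weighted piece on the complement, and then balance these two pieces.

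The main obstacle I anticipate is getting the directed boundary energy $\cE^-$ of the right test function to match $\Vars{A}{f}$ and $\Vars{A}{g}$ \emph{without losing constants}, since the minimal monotone extension $\bar f$ automatically has $\cE^-(\bar f)=0$, which is too lossless to be useful — so the test function must be something like $f$ extended by a \emph{constant} off $A$ (say the mean, i.e. $0$ after normalization), which is genuinely non-monotone across the boundary of $A$ and whose downward edge-differences at the boundary are exactly $f(x)$ for $x$ a minimal element of $A$. Controlling $\sum_i \min\{0,\cdot\}^2$ over exactly these boundary edges in terms of $\Vars{A}{f}$, and simultaneously lower-bounding $\dist_2^{\mono}$ of this extension in terms of how far $f$ is from being "anti-correlated" with $g$ on $A$, is the delicate part; I expect the resolution uses that the \emph{best} monotone approximant $g^*$ to the extended-$f$ must itself essentially restrict to a monotone function on $A$, at which point FKG-type positivity of $\Covs{A}{f}{g^*}$ (now a genuine correlation on the cube, or handled via \Cref{thm:directed} again) closes the loop and produces the clean bound $\delta(A)\ge -\sqrt{1-\mu(A)}$.
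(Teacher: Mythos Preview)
Your proposal does not constitute a proof, and the strategy it outlines has a genuine gap. You want to derive the approximate FKG inequality from the directed Poincar\'e inequality (\cref{thm:directed}), but you never produce a test function $\Phi:\zo^n\to\bR$ for which both sides of \cref{thm:directed} are controlled by the right quantities. Concretely, you would need (i)~a lower bound on $\dist_2^{\mono}(\Phi)$ in terms of $-\Covs{A}{f}{g}$ and (ii)~an upper bound on $\cE^-(\Phi)$ in terms of $\sqrt{\Vars{A}{f}\Vars{A}{g}}$, with the factor $\sqrt{1-\mu(A)}$ emerging from the comparison. As you yourself observe, any \emph{monotone} extension of $f$ or $f+g$ has $\cE^-=0$, which is useless; and if you extend $f$ by a constant off $A$, then $\cE^-$ becomes a sum over boundary edges of $A$ weighted by negative parts of $f$, a quantity that bears no general relation to $\Vars{A}{f}$ (the boundary of $A$ can be arbitrarily large or small relative to $|A|$). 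Finally, your proposed ``closing of the loop'' via ``FKG-type positivity of $\Covs{A}{f}{g^*}$'' is circular: $g^*|_A$ is monotone on $A$, so the nonnegativity of $\Covs{A}{f}{g^*|_A}$ is exactly an instance of the statement you are trying to prove; and if you instead mean correlation on the full cube, classical FKG does not apply because your extended $\tilde f$ is, by design, not monotone.

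The paper's proof is entirely different and does not invoke \cref{thm:directed} at all; in the paper's logical structure, \cref{thm:approximate-FKG} and \cref{thm:directed} are two \emph{independent} ingredients later combined to yield the spectral gap bound. The paper first handles $\{0,1\}$-valued $f,g$ by a one-line reduction to classical FKG on $\zo^n$ (\cref{lem:FKG-zo}); your instinct to pass to the full cube is correct at the indicator level. The substantial content is extending to real-valued $f,g$: this is reduced to a pure probability statement (\cref{thm:FKG}) about any pair $(X,Y)$ satisfying $\Pr{X\ge a,\,Y\ge b}\ge c\,\Pr{X\ge a}\Pr{Y\ge b}$ for all $a,b$, which is then proved via an integral representation of variance and covariance through push-forward measures on $[0,1]$, a bilinear kernel $K_c$, and a Cauchy--Schwarz argument after a Fubini exchange (\cref{lem:FKG-main}). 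The constant $\sqrt{1-\mu(A)}$ does \emph{not} arise from a ``measure of the complement'' Cauchy--Schwarz as you conjecture; it comes out of this integral machinery, and the paper explicitly notes that the real-valued extension is ``surprisingly nontrivial'' with nontrivial equality cases beyond indicators.
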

\subsection{The case of small $A$: fast mixing requires good FKG ratio}

The bound in \Cref{thm:main} gives only a spectral gap bound $\gamma(H_{A})\gtrsim \mu(A)/n$ for the random walk censored to $A$. The dependence on $n$ is clearly optimal: even in the case $A=H_{n}$, the spectral gap is exactly $1/n$. The next example shows that the asymptotic dependence on $\mu(A)$ is also close to optimal.

\begin{example}[{\cite[Example 1.3]{ding2014mixing}}]\label{ex:torpid-mixing}
Assume $n/4\leq m\leq n/2$ and consider $A=\{x\in\zo^{n}:x_{1}=\dots=x_{m}=1\}\cup\{x\in\zo^{n}:x_{m+1}=\dots=x_{2m}=1\}$, the union of two subcubes. In this case, $\mu(A)\sim 2^{-m+1}$. Let $f:A\rightarrow\bR$ be defined by $f(x)=1$ if $x_{1}=\dots=x_{m}=1$ and $f(x)=-1$ otherwise. Then $\cE_{A}(f)\sim m\cdot 2^{-m}$ and $\Vars{A}{f}\sim 1$, so $\gamma(H_{A})\lesssim \mu(A)$. Standard Markov chain theory (e.g. \cite[Theorem 7.4]{levin2017markov}) shows that the mixing time of the random walk censored to $A$ is exponentially large in $n$.
\end{example}

Remarkably, \Cref{ex:torpid-mixing} is also where the approximate FKG inequality fails badly\textemdash if we let $A$ be the union of two subcubes as in \Cref{ex:torpid-mixing} and consider the indicator functions of the two subcubes, it is easy to see that they are both increasing functions on the poset $A$ but are very anti-correlated (\ie the approximate FKG ratio $\delta(A)$ is very close to $-1$).

Our results actually reveal that, when $A$ is a monotone set, torpid mixing happens \emph{if and only if} the approximate FKG ratio of $A$ is close to $-1$.

\begin{theorem}\label{thm:FKG-to-spectral-gap}
Let $A\subseteq\{0,1\}^{n}$ be a monotone set with at least 2 elements. Then for all functions $f:A\rightarrow\bR$, we have
\[
(1+\delta(A))\cdot \Vars{A}{f}\leq \cE_{A}(f).
\]
\end{theorem}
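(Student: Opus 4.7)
The plan is to apply the directed Poincar\'{e} inequality (\Cref{thm:directed}) and the approximate FKG inequality (via the definition of $\delta(A)$) to suitable $L^{2}$-closest monotone approximations of $f$ and $-f$. Assume WLOG that $\Exs{A}{f}=0$; write $v\define \Vars{A}{f}$ and $\epsilon\define \cE_{A}(f)$. Let $g_{1}:A\to \bR$ be the $L^{2}(A)$-closest monotone function to $f$, normalized so $\Exs{A}{g_{1}}=0$, and let $g_{2}$ be defined analogously for $-f$. Set $q_{1}\define \|f-g_{1}\|_{L^{2}(A)}^{2}$, $q_{2}\define \|{-f}-g_{2}\|_{L^{2}(A)}^{2}$, and $V_{i}\define \Vars{A}{g_{i}}$.

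The first key step is to establish $q_{1}+q_{2}\leq \epsilon$. Extend $f$ to $\tilde{f}:\zo^{n}\to\bR$ by setting $\tilde{f}(x)=-M$ for $x\notin A$ with $M$ sufficiently large, and define $\widetilde{-f}$ symmetrically. Because $A$ is upward-closed, every boundary edge of the cube has its \emph{lower} endpoint outside $A$, so the very negative value $-M$ causes each boundary edge to contribute zero to $\cE^{-}$ of both extensions. The remaining inside-$A$ edges split neatly: each contributes to $\cE^{-}(\tilde{f})$ precisely when $f$ decreases upward along it, and to $\cE^{-}(\widetilde{-f})$ precisely when $f$ increases upward, yielding the key identity
\[
\cE^{-}(\tilde{f})+\cE^{-}(\widetilde{-f})=\mu(A)\cdot \cE_{A}(f).
\]
Since restricting a monotone function on $\zo^{n}$ to $A$ yields a monotone function on $A$, \Cref{thm:directed} applied to each extension gives $\mu(A)q_{i}\leq \cE^{-}(\text{extension}_{i})$; summing yields $q_{1}+q_{2}\leq \epsilon$.

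The second key step leverages the projection structure of $g_{1},g_{2}$. Since $g_{1}$ is the $L^{2}$-projection of $f$ onto the convex cone of monotone functions on $A$, the orthogonality $\Exs{A}{(f-g_{1})g_{1}}=0$ holds, yielding the Pythagorean identity $V_{1}=v-q_{1}$ and $\Exs{A}{fg_{1}}=V_{1}$; symmetrically $V_{2}=v-q_{2}$ and $\Exs{A}{fg_{2}}=-V_{2}$. Hence
\[
\Exs{A}{f\cdot \frac{g_{1}-g_{2}}{2}}=\frac{V_{1}+V_{2}}{2}=v-\frac{q_{1}+q_{2}}{2}.
\]
From \Cref{def:FKG-ratio}, $\Covs{A}{g_{1}}{g_{2}}\geq \delta(A)\sqrt{V_{1}V_{2}}$; combining with AM-GM ($\sqrt{V_{1}V_{2}}\leq (V_{1}+V_{2})/2$) and $\delta(A)\leq 0$ gives
\[
\Exs{A}{\left(\frac{g_{1}-g_{2}}{2}\right)^{2}}\leq \frac{(1-\delta(A))(V_{1}+V_{2})}{4}=\frac{(1-\delta(A))(v-(q_{1}+q_{2})/2)}{2}.
\]
Applying Cauchy-Schwarz (squared) to $\Exs{A}{f\cdot (g_{1}-g_{2})/2}$ and dividing by $v-(q_{1}+q_{2})/2$---in the nontrivial case; otherwise $v\leq (q_{1}+q_{2})/2\leq \epsilon/2$ already implies the theorem, as does the degenerate case where some $g_{i}$ is constant---yields $(1+\delta(A))v\leq q_{1}+q_{2}\leq \epsilon$, completing the proof.

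I expect the main obstacle to be identifying the sharp combination that extracts exactly the constant $1+\delta(A)$. Naive approaches---bounding each $q_{i}$ by $\epsilon$ separately, or invoking the triangle inequality on $\|g_{1}+g_{2}\|_{L^{2}(A)}$---lose constant factors. Two insights are crucial: the identity $\cE^{-}(\tilde{f})+\cE^{-}(\widetilde{-f})=\mu(A)\cE_{A}(f)$, which exploits the monotonicity of $A$ to deliver the joint bound $q_{1}+q_{2}\leq \epsilon$ rather than separate $\epsilon$-bounds; and the use of the \emph{antisymmetric} combination $g_{1}-g_{2}$ paired with Cauchy-Schwarz against $f$ (rather than the symmetric combination), which interacts with the approximate FKG bound and the Pythagorean identities to produce the exact constant $1+\delta(A)$.
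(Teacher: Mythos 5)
Your proof is correct, and it shares the paper's overall architecture (the extension $\tilde f$ with small values outside $A$, the identity $\cE^-(\tilde f)+\cE^-(\widetilde{-f})=\mu(A)\cE_A(f)$, the directed Poincar\'e inequality, and the approximate FKG ratio $\delta(A)$), but the final ``combination'' step is genuinely different and, in my view, a bit cleaner. The paper works with \emph{arbitrary} non-constant monotone $g,h$ near $f$ and $-f$, introduces the surrogate quantities $\tau(f,g)$ and $\rho(f,g)$, and then needs a small lemma (the paper's \cref{prop:trigonometry}) proved via the positive semidefiniteness of the $3\times 3$ correlation matrix of $(f,g,h)$ together with a Cauchy--Schwarz step on $\Var_A[g+h]$. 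You instead take $g_1,g_2$ to be the \emph{exact} $L^2(A)$-projections of $f$ and $-f$ onto the monotone cone, observe that because the cone is closed under adding constants the projections inherit $\Exs{A}{g_i}=0$ automatically, and that the variational inequality for cone projection gives the orthogonality $\Exs{A}{(f-g_1)g_1}=0$; this yields the Pythagorean identities $V_i=v-q_i$ and $\Exs{A}{fg_1}=V_1$, $\Exs{A}{fg_2}=-V_2$. From there a single Cauchy--Schwarz on $\Exs{A}{f\cdot(g_1-g_2)/2}$ combined with AM--GM ($\sqrt{V_1V_2}\le(V_1+V_2)/2$, using $\delta(A)\le 0$) produces the constant $1+\delta(A)$ directly, bypassing the determinant lemma entirely. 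The paper's route buys modularity (the $\tau/\rho$ machinery is reused in the converse direction in \cref{sec:reverse}), while your route buys transparency by extracting the sharp constant from the explicit projection structure; both are sound. Your handling of the edge cases (some $g_i$ constant, or $v\le(q_1+q_2)/2$) is also correct.

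One small bookkeeping remark: the paper's extension operator $T$ sets the value outside $A$ to $\min_{y\in A} f(y)$ rather than to $-M$ for large $M$, but the two choices are interchangeable for the argument since all that matters is that the value outside $A$ is at most the minimum of $f$ on $A$ (so that downward-facing boundary edges, which by monotonicity of $A$ are the only boundary edges, do not contribute to $\cE^-$).
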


\begin{theorem}\label{thm:spectral-gap-to-FKG}
Let $A\subseteq\{0,1\}^{n}$ be a monotone set with at least 2 elements. Then for some non-constant function $f:A\rightarrow\bR$, we have
\[
(1+\delta(A))\cdot n\cdot \Vars{A}{f}\geq \cE_{A}(f).
\]
\end{theorem}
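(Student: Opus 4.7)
The plan is to construct an explicit witness $f$ from a pair of monotone functions that nearly achieves the approximate FKG ratio $\delta(A)$. The case $\delta(A) = 0$ is trivial: any non-constant $f$ satisfies the classical bound $\cE_A(f) \leq n \Vars{A}{f} = (1 + \delta(A)) n \Vars{A}{f}$, which follows by a direct edge-by-edge application of $(a-b)^2 \leq 2(a^2 + b^2)$ combined with translation-invariance of $\cE_A$ and $\Vars{A}{\cdot}$ (equivalently, from positive semidefiniteness of the lazy censored walk's transition operator). So the interesting case is $\delta(A) \in (-1, 0)$.

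In this case I begin with a compactness argument: the set of monotone non-constant $g : A \to \bR$ normalized to $\Exs{A}{g} = 0$ and $\Vars{A}{g} = 1$ is a compact subset of $\bR^A$ (a closed slice of the monotone cone inside a finite-dimensional sphere), so the infimum in the definition of $\delta(A)$ is attained by some non-constant monotone pair $(g, h)$ with $\Covs{A}{g}{h} = \delta(A)$ under this normalization. I would also note that $\delta(A) > -1$ strictly, since $\delta(A) = -1$ would force a linear dependence $h = -a g + b$ with $a > 0$, incompatible with both $g$ and $h$ being monotone increasing and non-constant.

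The witness is then $f \define g - h$, which is non-constant because $\Vars{A}{f} = 2(1 - \delta(A)) > 0$. The key step is the observation that for monotone increasing $g, h$, the bilinear form
\[
\cE_A(g, h) \define \frac{1}{4}\sum_{i=1}^{n} \Exu{x \in A}{(g(x) - g(x^{\oplus i}))(h(x) - h(x^{\oplus i})) \cdot \ind{x^{\oplus i} \in A}}
\]
is non-negative, since at each edge of $H_A$ the two increments share a common sign. Consequently $\cE_A(g - h) = \cE_A(g + h) - 4 \cE_A(g, h) \leq \cE_A(g + h)$. Applying the trivial Poincar\'e-type upper bound $\cE_A(u) \leq n \Vars{A}{u}$ to $u \define g + h$ gives $\cE_A(g + h) \leq n \Vars{A}{g+h} = 2n(1 + \delta(A))$, where the variance is small precisely because $g$ and $h$ are extremally anti-correlated. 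Comparing with $(1 + \delta(A)) n \Vars{A}{f} = 2n(1 + \delta(A))(1 - \delta(A))$, the desired inequality reduces to $1 \leq 1 - \delta(A)$, which holds since $\delta(A) \leq 0$.

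I do not anticipate a serious obstacle: the three ingredients\textemdash existence of an extremal pair by compactness, non-negativity of $\cE_A(g, h)$ for monotone $g, h$, and the trivial Dirichlet upper bound $\cE_A(u) \leq n \Vars{A}{u}$\textemdash are all routine. The conceptual core is the parallelogram-style identity $\cE_A(g - h) = \cE_A(g + h) - 4 \cE_A(g, h)$, which allows the tiny variance of $g + h$ (forced by near-extremal anti-correlation) to control the Dirichlet energy of the larger-variance function $g - h$.
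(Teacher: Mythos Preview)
Your proof is correct and takes a genuinely different, more elementary route than the paper's.

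Both proofs split into the cases $\delta(A)=0$ and $\delta(A)<0$, and in the latter case both pick the same witness $f=g-h$ from an extremal monotone pair $(g,h)$ obtained by compactness. The divergence is in how $\cE_A(g-h)$ is bounded. The paper reverses its entire forward machinery: it invokes the reverse directed Poincar\'e inequality $\dist_2^{\mono}(\cdot)^2\ge \frac{1}{n}\cE^-(\cdot)$ on the full hypercube, passes through the domain-extension operator $T$, and uses the $\tau/\rho$ correlation calculus to arrive at $\cE_A(g-h)\le n\big(2-\rho(g-h,g)^2-\rho(h-g,h)^2\big)\Vars{A}{g-h}$, which then evaluates to $n(1+\delta(A))\Vars{A}{g-h}$. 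Your argument bypasses all of this with two elementary observations: (i) $\cE_A(g,h)\ge 0$ for monotone $g,h$ because edge-increments share a sign, giving $\cE_A(g-h)\le \cE_A(g+h)$ via the parallelogram identity; and (ii) the trivial operator bound $\cE_A(u)\le n\Vars{A}{u}$ applied to $u=g+h$, whose variance $2(1+\delta(A))$ is small precisely by extremality. The final comparison $2n(1+\delta(A))\le (1+\delta(A))\cdot n\cdot 2(1-\delta(A))$ is then immediate from $\delta(A)\le 0$.

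What each approach buys: the paper's proof is structurally parallel to its forward direction (\Cref{thm:FKG-to-spectral-gap}), which makes the two-sided relationship $\frac{1+\delta(A)}{n}\le \gamma(H_A)\le 1+\delta(A)$ conceptually unified, but at the cost of needing \Cref{subsec:reverse-directed-Poincare}. Your approach is self-contained and shows that \Cref{thm:spectral-gap-to-FKG} does not require any directed isoperimetry at all---the trivial bound $\cE_A\le n\Vars{A}{\cdot}$ together with the sign observation $\cE_A(g,h)\ge 0$ already suffice.
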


The two theorems above imply that $(1+\delta(A))/n\leq \gamma(H_{A})\leq 1+\delta(A)$, which means the approximate FKG ratio of $A$ characterizes the spectral gap of $H_{A}$ up to a factor of $n$.

\begin{remark}
The case $A=H_{n}$ demonstrates that the lower bound $(1+\delta(A))/n\leq \gamma(H_{A})$ is tight. Moreover, there are examples indicating that the upper bound $\gamma(H_{A})\leq 1+\delta(A)$ is tight up to a constant factor\textemdash for instance, when $n\geq 2$ and $A=\{x\in \zo^{n}:|x|\geq n-1\}$, we have $\gamma(H_{A})=\frac{1}{2n}$ and $1+\delta(A)\leq \frac{2}{n}$.
\end{remark}

\subsection{Open problems}

Our work leaves two avenues for potential improvement, roughly corresponding to
two regimes in the size of the set $A$. We discuss each of these directions in
turn.

\paragraph*{Large $A$.} When $\mu(A)\geq \varepsilon$ for some fixed constant $\varepsilon>0$, we establish the tight asymptotic bound $\gamma(H_{A})\gtrsim 1/n$. However, this only yields the mixing time bound $t_{\mix}=O_{\varepsilon}(n^{2})$, which does not resolve \Cref{q:n-logn-mixing}.
One way to establish an $O(n\log n)$ mixing time bound would be to prove a
log-Sobolev inequality instead of an $L^{2}$-Poincar\'{e} inequality.
It is plausible that our techniques could be further adapted to establish a log-Sobolev inequality, similar to how we extended the argument of \cite{ding2014mixing} from the discrete setting to the $L^{2}$ setting. Are there analogous versions of the directed isoperimetric inequality and the approximate FKG inequality in the log-Sobolev setting? We leave these as open questions.

\paragraph*{Small $A$.} When no additional structure is imposed, the random walk censored to $A$ may mix very slowly if $\mu(A)\ll 1$ (\eg \Cref{ex:torpid-mixing}). In many problems of interest, however, $A$ possesses some form of structure, and the goal is to obtain a good bound on the mixing time, aiming for an efficient approximate sampling algorithm. For example, when $A$ is a halfspace, \ie defined by $A=\{x\in\zo^{n}:a_{1}x_{1}+\dots+a_{n}x_{n}\geq b\}$ for nonnegative numbers $a_{1},\dots,a_{n},b$, \cite{morris2004random} proves a mixing time bound of $n^{9/2+o(1)}$ which yields a sampling algorithm for 0-1 knapsack solutions.

Our work (\Cref{thm:FKG-to-spectral-gap,thm:spectral-gap-to-FKG}) reveals that the deciding factor for whether rapid mixing holds is the \emph{approximate FKG ratio} of $A$ rather than the size of $A$. However, we do not know how to leverage additional structure of $A$ in a direct study of its approximate FKG ratio, and we leave the development of new tools for this purpose as an interesting direction for future work.

\subsection{Related work}
\label{sec:related-work}

\paragraph*{Mixing time of censored random walks.} Markov Chain Monte Carlo methods are the object
of extensive study in mathematics, statistical physics and theoretical computer science, and the
question of mixing time of random walks lies at the core of algorithms for approximate sampling and
counting; see \eg \cite{Jer98,Gur16,MT06,levin2017markov}. In settings featuring combinatorial
structure such as in sampling matchings, independent sets, or spanning forests of a graph, or their
natural generalizations to the more algebraic setting of matroids, a \emph{basis exchange} or
\emph{down-up} random walk is usually employed, and spectral arguments are used to bound the mixing
time; see \eg \cite{jerrum2003counting,ALGV19,CGM19,AL20}.

Our work focuses on the setting where the set $A$ may not enjoy such rich
structure, and instead is only guaranteed to be monotone. As discussed above,
our results improve upon the spectral gap and mixing time bounds shown by the
previous works of \cite{ding2014mixing,cohen2016problems}. In the special case
where the monotone set $A$ is additionally promised to contain every $x \in
\zo^n$ with Hamming weight at least $(\tfrac{1}{2}-\epsilon)n$, \ie the middle
layers of the hypercube, one may expect the censored and uncensored random walks
to behave similarly, and indeed in this case \cite{Mat02} gave the optimal
$\Omega_\epsilon(1/n)$ bound for the spectral gap and log-Sobolev constant of
the censored random walk, which implies the optimal $O_\epsilon(n \log n)$
mixing time bound.

Besides combinatorial or algebraic structure, one may also ask what
\emph{geometric} structure affords fast mixing. As mentioned above,
\cite{morris2004random} studied the censored random walk when the set $A$ is a
halfspace, \ie $A=\{x\in\zo^{n}:a_{1}x_{1}+\dots+a_{n}x_{n}\leq b\}$ for
nonnegative numbers $a_{1},\dots,a_{n},b$, which corresponds to 0-1 knapsack
solutions. Intuitively, such set $A$ should not contain bottlenecks even if it
is very small, and indeed \cite{morris2004random} showed a mixing time bound of
$n^{9/2+o(1)}$. By our \cref{thm:spectral-gap-to-FKG}, this also gives an
inverse polynomial lower bound on the quantity $1+\delta(A)$. On the other hand, a lower bound of
$\widetilde \Omega(n^2)$ holds for the mixing time \cite{morris2004random}, and closing this gap is
an interesting open problem\footnote{In a different line of investigation, a series of works has
explored approaches for approximately sampling and counting knapsack solutions using dynamic
programming \cite{Dye03,GKMSVV11,SVV12,GMW18,RT19,FJ25};
the best results in this direction are $\widetilde O(n^{5/2})$ and
$\widetilde O(n^4)$-time algorithms for approximately sampling (depending on the model of
computation) \cite{Dye03,GMW18,RT19}, and a (subquadratic) $\widetilde O(n^{3/2})$-time algorithm
for approximately counting knapsack solutions \cite{FJ25}. However, these results do not directly
say anything about the mixing time of the random walk on knapsack solutions.}\!\!.

\paragraph*{Directed isoperimetric inequalities.} Directed versions of isoperimetric inequalities
such as the classical Poincaré inequality and Talagrand's inequality \cite{Tal93} have emerged over
the last couple of decades as a key tool in the field of property testing. Since the introduction of
the problem of monotonicity testing \cite{GGLRS00} and especially since the work of \cite{CS16},
directed isoperimetric inequalities have unlocked new results on the query complexity of testing
monotonicity of Boolean functions over discrete domains such as the hypercube and hypergrid
\cite{KMS18,BCS18,PRW22,BCS23a,BCS23b,BKKM23},
and, more recently, real-valued functions over the hypercube \cite{BKR24} and the
continuous cube \cite{pinto2024directed}. In recent work, \cite{CCRSW25} proved such an inequality
toward testing monotonicity of \emph{probability distributions} over the hypercube, using certain
conditional samples from that distribution. To the best of our knowledge, the work of
\cite{ding2014mixing} was the first application of a directed isoperimetric inequality (\ie the
inequality of \cite{GGLRS00} for Boolean functions) outside of property testing, and the present
work seems to be the first application of a real-valued directed isoperimetric inequality outside of
property testing.

The specific Poincaré inequality we prove in \cref{thm:directed} is most closely related to the
following previous developments. The work of \cite{Fer23} introduced the systematic study of
directed $L^p$-Poincaré inequalities for monotonicity testing, and proved an $L^1$ version of our
\cref{thm:directed}; in the language of that paper, \cref{thm:directed} is a directed $(L^2,
\ell^2)$-Poincaré inequality, whereas \cite{Fer23} proved an $(L^1, \ell^1)$ inequality. Another
similar inequality for real-valued functions was proved by \cite{BKR24}, who considered the Hamming
distance as opposed to $L^p$ distance. An $(L^2, \ell^2)$ inequality (\ie the same flavor as ours)
was proved for functions over the \emph{continuous} cube $[0,1]^n$ in \cite{pinto2024directed}, and
our proof via the study of a dynamical system is directly inspired by theirs. Most recently,
\cite{CCRSW25} proved a directed $(L^1, \ell^2)$-Poincaré inequality\footnote{Compared to
\cref{thm:directed}, that inequality uses the $L^1$ as opposed to $L^2$ distance, and takes a
square-root inside the expectation operator in our definition of $\cE^-(f)$ in
\cref{def:upward-boundary}. By Jensen's inequality, the square root of the left- and right-hand
sides of our inequality are respectively larger than the left- and right-hand sides of the
inequality of \cite{CCRSW25}, so the two results are not immediately comparable.} for real-valued
functions on the hypercube, by extending the result of \cite{KMS18} for Boolean functions via a
thresholding argument of \cite{BRY14}. There does not seem to be a trivial reduction between our
inequality and the foregoing results.

\subsection{Organization of the paper}

It is clear that \Cref{thm:approximate-FKG,thm:FKG-to-spectral-gap} together imply \Cref{thm:main},
and we recall that standard Markov chain theory (\cite[Theorem 12.4]{levin2017markov}) derives \Cref{thm:intro-mixing-time} from \Cref{thm:main}. 

\Cref{sec:approximate-FKG} presents a proof of the approximate FKG inequality for large monotone sets (\Cref{thm:approximate-FKG}). \Cref{sec:directed-to-undirected} is where the heart of the argument of \cite{ding2014mixing} is carried out in the $L^{2}$ setting. \Cref{sec:directed-to-undirected} demonstrates that the directed isoperimetric inequality of $\zo^{n}$ (\Cref{thm:directed}) implies the undirected isoperimetric inequality of the monotone subset $A$ (\Cref{thm:FKG-to-spectral-gap}), and why the approximate FKG ratio of $A$ is important for this implication. \Cref{sec:spectral-directed,sec:directed-gap-of-hypercube} are devoted to proving the directed $L^{2}$-Poincar\'{e} inequality (\Cref{thm:directed}). We illustrate this organization using the following diagram.

\begin{center}
\begin{tikzpicture}

\node (A) [hidden] at (0,0) {};
\node (B) [node, right=3cm of A] {\Cref{thm:directed}};
\node (C) [node, right=2cm of B] {\Cref{thm:FKG-to-spectral-gap}};
\node (E) [node, above=1.5cm of C] {\Cref{thm:approximate-FKG}};
\node (D) [hidden, left=2cm of E] {}; 
\node (F) [node, right=0.5cm of C, yshift=1.25cm] {\Cref{thm:main}};
\node (G) [node, right=0.5cm of F] {\Cref{thm:intro-mixing-time}};

\draw [double arrow] (A) -- (B) node[midway, above] {\Cref{sec:spectral-directed,sec:directed-gap-of-hypercube}};  
\draw [double arrow] (B) -- (C) node[midway, above] {\Cref{sec:directed-to-undirected}}; 
\draw [double arrow] (D) -- (E) node[midway, above] {\Cref{sec:approximate-FKG}};  
\draw [double arrow] (C) -- (F);
\draw [double arrow] (E) -- (F);
\draw [double arrow] (F) -- (G);

\end{tikzpicture}
\end{center}

Finally, \Cref{sec:reverse} contains a proof of the \Cref{thm:spectral-gap-to-FKG}, which is logically independent from the proof of the main result \Cref{thm:intro-mixing-time}.

The following notational convention is used throughout the paper.

\paragraph{Notation.} Given a discrete set $V$, we denote by $L^2(V)$ the Hilbert space obtained by endowing the set of $V
\to \bR$ functions with the inner product $\inp{f}{g} \define \Exu{x \in V}{f(x) g(x)}$, where the
expectation is taken with respect to the uniform distribution over $V$. This inner product induces
the norm $\|f\|_{2} = \sqrt{\inp{f}{f}} = \sqrt{\Ex{f^2}}$.

\section{Approximate FKG inequality}\label{sec:approximate-FKG}

The goal of this section is to prove \Cref{thm:approximate-FKG}, where we need to lower bound the correlation ratio between two monotone increasing functions on $A$. We first note that the case where the functions take values in $\{0,1\}$ is easy. Indeed, we have the following simple lemma.

\begin{lemma}\label{lem:FKG-zo}
Assume that $f,g:A\rightarrow\{0,1\}$ are monotone increasing functions. Then we have $\Exu{x\in A}{f(x)g(x)}\geq \mu(A)\cdot\Exu{x\in A}{f(x)}\cdot \Exu{x\in A}{g(x)}$.
\end{lemma}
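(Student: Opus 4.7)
The plan is to reduce the claim to the classical FKG inequality on the hypercube, by observing that the $1$-level sets $F \define f^{-1}(1)$ and $G \define g^{-1}(1)$ are not merely upward-closed subsets of $A$, but in fact upward-closed subsets of the entire cube $\{0,1\}^n$.

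First, I would verify this promotion. Since $f$ is monotone increasing on the poset $A$, the set $F$ is upward-closed within $A$. The key observation is that $A$ itself is upward-closed in $\{0,1\}^n$, so if $x \in F$ and $y \succeq x$ in $\{0,1\}^n$, then $y \in A$ (by monotonicity of $A$) and hence $y \in F$ (by upward-closedness of $F$ inside $A$). Thus $F$, and by the same argument $G$, is a monotone subset of the full hypercube.

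Next, I would apply the classical FKG inequality of \cite{fortuin1971correlation} to the monotone indicator functions $\mathbf{1}_{F}, \mathbf{1}_{G} : \{0,1\}^n \to \{0,1\}$ with respect to the uniform measure on $\{0,1\}^n$, which yields
\[
    \frac{|F \cap G|}{2^n} \;\geq\; \frac{|F|}{2^n} \cdot \frac{|G|}{2^n}, \qquad \text{equivalently,} \qquad 2^n \cdot |F \cap G| \;\geq\; |F| \cdot |G|.
\]
Dividing both sides by $|A|^2$ and rewriting using $\Exu{x\in A}{f(x)g(x)} = |F\cap G|/|A|$, $\Exu{x\in A}{f(x)} = |F|/|A|$, $\Exu{x\in A}{g(x)} = |G|/|A|$, together with $\mu(A) = |A|/2^n$, recovers exactly the desired inequality.

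I do not anticipate any real obstacle here; the only insight needed is the promotion of $F$ and $G$ from monotone subsets of $A$ to monotone subsets of the cube, which leverages the monotonicity of $A$ itself. Once this is established, the lemma follows immediately from the classical FKG inequality on $\{0,1\}^n$. Note that it is essential that the functions are $\{0,1\}$-valued, since only then do the correlation expectations reduce to cardinalities of the level sets, which in turn allows us to move from the measure on $A$ to the uniform measure on the full cube at the cost of the factor $\mu(A)$; for real-valued functions, no such reduction is available and we must appeal instead to \Cref{thm:approximate-FKG}.
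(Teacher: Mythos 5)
Your proof is correct and follows essentially the same route as the paper: identify the $1$-level sets of $f$ and $g$, observe (using that $A$ itself is upward-closed) that they are monotone subsets of the full hypercube, apply the classical FKG inequality there, and translate back to expectations over $A$ by dividing by $|A|$ and inserting the factor $\mu(A)$. You spell out the "promotion" step a bit more explicitly than the paper does, but the argument is the same.
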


\begin{proof}
Let $B=\{x\in A:f(x)=1\}$ and $C=\{x\in A:g(x)=1\}$. By the monotonicity of $f$ and $g$, the sets $B$ and $C$ are both monotone subsets of the hypercube $\zo^{n}$. By the classical FKG inequality \cite{fortuin1971correlation} we know that $\mu(B\cap C)\geq \mu(B)\cdot \mu(C)$. Therefore,
\[
\Exu{x\in A}{f(x)g(x)}=\frac{\mu(B\cap C)}{\mu(A)}\geq \mu(A)\cdot \frac{\mu(B)}{\mu(A)}\cdot\frac{\mu(C)}{\mu(A)}=\mu(A)\cdot\Exu{x\in A}{f(x)}\cdot \Exu{x\in A}{g(x)}. \qedhere
\]
\end{proof}

It is straightforward to deduce from \Cref{lem:FKG-zo} that for monotone increasing functions $f,g:A\rightarrow\{0,1\}$, the desired approximate FKG inequality 
$$\Covs{A}{f}{g}\geq -\sqrt{1-\mu(A)}\cdot\sqrt{\Vars{A}{f}\cdot\Vars{A}{g}}$$ holds. 

The main challenge in \Cref{thm:approximate-FKG} lies in extending this idea to real-valued functions. In fact, the problem can be reduced to proving the following statement, which involves purely random variables rather than any structural property of the partially ordered set $A$.

\begin{theorem}\label{thm:FKG}
Let $(X,Y)$ be a pair of real-valued random variables with bounded second moment. Suppose there is a constant $c\in[0,1)$ such that for all $a,b\in\mathbb{R}$,
\begin{equation}\label{eq:FKG-condition}
\Pr{X\geq a, Y\geq b}\geq c\cdot\Pr{X\geq a}\cdot\Pr{Y\geq b},
\end{equation}
then we must have
\begin{equation}\label{eq:FKG-conclusion}
\Cov{X}{Y}\geq -\sqrt{(1-c)\cdot\Var{X}\cdot\Var{Y}}.
\end{equation}
\end{theorem}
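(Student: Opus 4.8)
The plan is to use the classical covariance identity that expresses $\Cov{X}{Y}$ as a double integral of the "joint upper tail minus product of upper tails," and then bound this integrand using the hypothesis \eqref{eq:FKG-condition}. Concretely, Hoeffding's covariance formula states
\[
\Cov{X}{Y} = \int_{\bR}\int_{\bR} \Big( \Pr{X \geq a, Y \geq b} - \Pr{X \geq a}\Pr{Y \geq b} \Big)\, da\, db,
\]
valid whenever $X, Y$ have finite second moments. Call the integrand $H(a,b)$. The hypothesis gives $H(a,b) \geq (c-1)\Pr{X \geq a}\Pr{Y \geq b} = -(1-c)\,\overline{F}_X(a)\,\overline{F}_Y(b)$, where $\overline{F}_X(a) := \Pr{X \geq a}$. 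This pointwise lower bound on $H$ is not directly integrable to something finite (the product of survival functions integrates to $\Ex{X}\Ex{Y}$-type quantities, not variances), so the key trick is to combine it with the \emph{trivial} upper bound coming from the fact that $H(a,b)$ also equals $\Cov{\ind{X \geq a}}{\ind{Y \geq b}}$ — but more usefully, I would instead symmetrize.

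The cleanest route: apply Hoeffding's identity to an independent copy. Let $(X', Y')$ be an independent copy of $(X,Y)$; then a standard computation gives $2\Cov{X}{Y} = -\Ex{(X-X')(Y-Y')} \cdot (-1)$... more precisely $\Cov{X}{Y} = \tfrac{1}{2}\Ex{(X-X')(Y-Y')}$ where the expectation is over the product measure of the two copies, and one can write $(X-X')(Y-Y') = \int\int (\ind{a \text{ between } X,X'})(\ind{b \text{ between } Y,Y'})\,\operatorname{sign}(\cdots)\,da\,db$. But I think the most economical approach avoids this: simply use the pointwise bound $H(a,b) \geq -(1-c)\overline{F}_X(a)\overline{F}_Y(b)$ \emph{together with} the symmetric bound obtained by applying the hypothesis to $-X, -Y$ (whose survival functions are $F_X(a^-), F_Y(b^-)$), giving also $H(a,b) \geq -(1-c) F_X(a) F_Y(b)$ after the appropriate change of variables — actually the hypothesis as stated only controls upper tails of $X,Y$, and $-X,-Y$ are also monotone-increasing-compatible in the intended application, so WLOG we may assume the hypothesis holds for all four sign combinations. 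Then average: $H(a,b) \geq -\tfrac{1-c}{2}\big(\overline F_X(a)\overline F_Y(b) + F_X(a)F_Y(b)\big)$, and more sign choices give $H(a,b) \geq -(1-c)\min\{\cdots\}$. Integrating a cleverly weighted combination should produce exactly $\Var{X}$ and $\Var{Y}$ via the identities $\int \overline F_X(a) F_X(a)\,da \cdot 2 = \Var{X}$-type formulas (indeed $\Var{X} = \int\int 2\,[\overline F_X(\max) - \overline F_X(a)\overline F_X(b)]$... ), combined with Cauchy–Schwarz to split the cross term $\int\int \overline F_X(a)\overline F_Y(b)$ into $\sqrt{\Var X \Var Y}$ — this is where the square root in \eqref{eq:FKG-conclusion} comes from.

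\textbf{The main obstacle} I anticipate is making the integrals converge and land exactly on variances rather than on ill-defined first-moment quantities: the naive bound $H \geq -(1-c)\overline F_X \overline F_Y$ integrates to $-(1-c)\Ex{X_+}\Ex{Y_+}$-flavored garbage (not even finite without more care, and not scale/shift invariant, whereas the conclusion is). The resolution must exploit that $H(a,b) \le \min\{\overline F_X(a), \overline F_Y(b)\} - \overline F_X(a)\overline F_Y(b)$ is \emph{small} when either tail is near $0$ or $1$, i.e. one should bound $|H|$ by the smaller of the crude bound and the trivial bound $H \le \overline F_X(a)(1-\overline F_X(a))^{1/2}(\cdots)$; concretely I would write $\Cov X Y \ge -(1-c)\int\int \overline F_X(a)\overline F_Y(b)\,da\,db$ is wrong, and instead insert a telescoping: replace $\overline F_X(a)\overline F_Y(b)$ by $\overline F_X(a)\overline F_Y(b) - (\text{something integrating to }\Ex X \Ex Y)$ using that Hoeffding's integrand is unchanged under shifting $X \mapsto X - \Ex X$. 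After centering $X, Y$ to have mean zero, one has $\int_{\bR}\overline F_X(a)\,da = \Ex X = 0$ is still divergent at each end but the \emph{signed} integral... Here the honest fix is: for mean-zero $X$, $\int_0^\infty \overline F_X = \Ex{X_+}$ and $\int_{-\infty}^0 (1-\overline F_X) = \Ex{X_-} = \Ex{X_+}$, and $\Var X = 2\int\int_{a<b}\big(\overline F_X(b)(1-\overline F_X(a))\big)$ — I'd carry the bound $H(a,b) \ge -(1-c)\overline F_X(a)\overline F_Y(b)$ but pair it across the diagonal regions $\{a \le b\}$ vs $\{a \ge b\}$ appropriately and invoke Cauchy–Schwarz in the form $\int\int \overline F_X \overline F_Y \le \sqrt{\int\int \overline F_X(a)\overline F_X(b)}\cdot\sqrt{\int\int \overline F_Y(a)\overline F_Y(b)}$ applied only on the region where the complementary survival functions are bounded away from $0$. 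Getting this decomposition exactly right — so that the bound is both finite and tight enough to give the stated constant $\sqrt{1-c}$ — is the crux; everything else is routine manipulation of Hoeffding's identity.
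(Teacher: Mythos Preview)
Your starting point --- Hoeffding's covariance identity --- matches the paper's, as does your diagnosis that the naive bound $H(a,b) \geq -(1-c)\,\overline F_X(a)\,\overline F_Y(b)$ integrates to something that is neither finite nor shift-invariant. But the proposal derails at the symmetrization step. The claim that ``WLOG we may assume the hypothesis holds for all four sign combinations'' is false, and this is exactly the difficulty the paper flags: assumption \eqref{eq:FKG-condition} is \emph{not} invariant under $(X,Y)\mapsto(-X,-Y)$, and in the intended application it genuinely fails for lower tails --- the sublevel sets $\{f\le a\}$ of an increasing $f:A\to\bR$ are down-sets in $A$ but not in $\zo^n$, so classical FKG does not apply to them. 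So you cannot assume the symmetric hypothesis, and the several sketches built on that assumption do not get off the ground.

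The second lower bound you need is the trivial, unconditional one coming from the union bound: $\Pr{X\ge a,\,Y\ge b}\ge 1-\Pr{X<a}-\Pr{Y<b}$, which rearranges to $H(a,b)\ge -(1-\overline F_X(a))(1-\overline F_Y(b))$ with \emph{no} factor of $(1-c)$. (You gesture at a ``trivial bound'' but write it as an upper bound on $H$; the one that matters here is this lower bound.) Taking the better of the two lower bounds gives
\[
H(a,b)\;\ge\;-\min\Big\{(1-c)\,\overline F_X(a)\,\overline F_Y(b),\;\big(1-\overline F_X(a)\big)\big(1-\overline F_Y(b)\big)\Big\},
\]
which is integrable and has the correct scaling. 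Even so, the remaining integral inequality --- that this $\min$ integrates to at most $\sqrt{(1-c)\Var{X}\Var{Y}}$ after pushing forward to $[0,1]$ via $a\mapsto\overline F_X(a)$ and $b\mapsto\overline F_Y(b)$ --- is not a routine Cauchy--Schwarz step. The asymmetry (the factor $1-c$ sits on only one branch of the $\min$) has to be absorbed somehow; the paper does this via an explicit involution $q(x)=(1-x)/(1-cx)$ and a kernel decomposition engineered so that Cauchy--Schwarz lands exactly on the variance expressions. None of the decompositions you sketch (centering, pairing across $\{a\le b\}$ vs.\ $\{a\ge b\}$, etc.) addresses this asymmetry, and the proposal does not contain a concrete plan that would.
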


\begin{proof}[Proof of \Cref{thm:approximate-FKG} assuming \Cref{thm:FKG}]
Let $x$ be a uniformly random element of $A$ and let $X=f(x)$ and $Y=g(x)$. Thus $\Var{X}=\Vars{A}{f}$, $\Var{Y}=\Vars{A}{g}$, and $\Cov{X}{Y}=\Covs{A}{f}{g}$.

Now for each pair of $a,b\in\bR$, if we define $f_{a},g_{b}:A\rightarrow\bR$ by 
$$f_{a}(x)\define \ind{f(x)\geq a}\qquad\text{and}\qquad g_{b}(x)\define\ind{g(x)\geq b},$$
since they are clearly monotone increasing 0/1-valued functions, we can apply \Cref{lem:FKG-zo} to $f_{a}$ and $g_{b}$ to deduce that
\[\Pr{X\geq a, Y\geq b}\geq \mu(A)\cdot\Pr{X\geq a}\cdot\Pr{Y\geq b}.\]
If $\mu(A)=1$, then $A=\zo^{n}$ and the conclusion follows from the classical FKG inequality. If $\mu(A)<1$, we apply \Cref{thm:FKG} to the random variable pair $(X,Y)$ with constant $c=\mu(A)$, which yields exactly the desired conclusion. 
\end{proof}

The remainder of this section is devoted to proving \Cref{thm:FKG}, which turns out to be surprisingly nontrivial. To illustrate the complexity behind this inequality, we note that equality in \eqref{eq:FKG-conclusion} holds for a wide range of joint distributions of $(X,Y)$ beyond the case captured by \Cref{lem:FKG-zo}, i.e. where $X$ and $Y$ take only two possible values.

\begin{example}
Let $(X,Y)$ follow a discrete distribution supported on the grid $\{0,2,3\}^{2}$. Specifically, let 
$$\Pr{X=3,Y=3}=\Pr{X=3,Y=2}=\Pr{X=2,Y=3}=\frac{1}{5},$$
$$\Pr{X=0,Y=3}=\Pr{X=3,Y=0}=\frac{1}{15},\qquad \text{and}\qquad \Pr{X=2,Y=2}=\frac{4}{15}.$$
It is easy to check that \eqref{eq:FKG-condition} holds for $c=45/49$ and all $a,b\in \bR$. On the other hand, we have $\Cov{X}{Y}=-8/45$ and $\Var{X}=\Var{Y}=28/45$, so equality in \eqref{eq:FKG-conclusion} holds for $c=45/49$ as well.
\end{example}

\subsection{A symmetric model}

A key challenge in \Cref{thm:FKG} lies in its lack of ``centrosymmetry'' with respect to $(X,Y)$. While the assumption \eqref{eq:FKG-condition} does not remain invariant under the substitution $X \mapsto -X$ and $Y \mapsto -Y$, the conclusion is unaffected by such substitutions. This raises an intriguing question: what is the ``symmetric'' information inherent in \eqref{eq:FKG-condition} that leads to the conclusion?

In this subsection, we present an approach that effectively extracts the ``symmetric'' information from \eqref{eq:FKG-condition}. To this end, we first define two Borel measures on $[0,1]$ induced by $X$ and $Y$.
\begin{definition}\label{def:push-forward}
Let $\varphi_{X}:\mathbb{R}\rightarrow[0,1]$ be the Borel-measurable map defined by $a\mapsto \Pr{X\geq a}$, and then for each Borel set $E\subseteq [0,1]$, let $\alpha(E)$ be the Lebesgue measure of the inverse image $\varphi_{X}^{-1}(E)$. The countable additivity of $\alpha$ easily follows from the countable additivity of the Lebesgue measure.

The measure $\alpha$ is referred to as the push-forward of the Lebesgue measure on $\mathbb{R}$ by the map $a\mapsto \Pr{X\geq a}$. Similarly define the Borel measure $\beta$ on $[0,1]$ to be the push-forward of the Lebesgue measure by the map $b\mapsto \Pr{Y\geq b}$.
\end{definition}

The definition of push-forward measures naturally leads to the following ``change of variable'' formula, which is a standard fact in measure theory.

\begin{proposition}[{\cite[Theorem 3.6.1]{bogachev2007measure}}]\label{prop:change-of-variable}
    Suppose $\varphi$ is a Borel-measurable map from $\mathbb{R}$ to $[0,1]$, and suppose $\lambda$ is the push-forward of the Lebesgue measure under $\varphi$. Then for any Borel-measurable function $f:[0,1]\rightarrow\mathbb{R}^{\geq 0}$, we have
    $$\int_{0}^{1}f(x)\odif \lambda(x)=\int_{\mathbb{R}}f(\varphi(a))\odif a.$$
\end{proposition}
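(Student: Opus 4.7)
The plan is to prove the identity by the standard measure-theoretic bootstrap: establish it first for indicator functions, then extend by linearity to non-negative simple functions, and finally to all non-negative Borel-measurable $f$ by monotone convergence.

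For the indicator case, fix any Borel set $E\subseteq [0,1]$ and take $f=\mathbf{1}_{E}$. Unwinding \Cref{def:push-forward}, the left-hand side of the proposed identity equals $\lambda(E)$, which by definition is exactly the Lebesgue measure $\mathrm{Leb}(\varphi^{-1}(E))$. For the right-hand side, the pointwise equality $\mathbf{1}_{E}\circ\varphi = \mathbf{1}_{\varphi^{-1}(E)}$ yields $\int_{\bR}\mathbf{1}_{E}(\varphi(a))\odif a = \mathrm{Leb}(\varphi^{-1}(E))$. Note that $\varphi^{-1}(E)$ is Borel (hence Lebesgue-measurable) precisely because $\varphi$ is Borel-measurable, so both sides are well-defined elements of $[0,\infty]$ and coincide. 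By linearity of both integrals, the identity extends immediately to every non-negative simple function $f=\sum_{i=1}^{k}c_{i}\mathbf{1}_{E_{i}}$ with $c_{i}\geq 0$ and $E_{i}\subseteq[0,1]$ Borel.

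Finally, for a general non-negative Borel-measurable $f:[0,1]\to\bR^{\geq 0}$, take the standard dyadic ladder of simple approximations $f_{n}\uparrow f$. The composition $f\circ\varphi$ is Borel-measurable as a composition of Borel-measurable maps, and pointwise monotone convergence $f_{n}\circ\varphi \uparrow f\circ\varphi$ on $\bR$ is preserved by precomposition with $\varphi$. Applying the monotone convergence theorem to the left-hand side against the measure $\lambda$ on $[0,1]$ and to the right-hand side against Lebesgue measure on $\bR$, and using the identity already established for each $f_{n}$, gives the identity for $f$ in the extended sense $[0,\infty]$.

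There is no genuine obstacle here: this is a textbook application of the ``standard machine'' of measure theory, which is why the paper simply cites Bogachev rather than reproducing the argument. The only mild care needed is to verify Borel measurability of $\varphi^{-1}(E)$ and of $f\circ\varphi$, and to keep the arithmetic in $[0,\infty]$ so that no $\infty-\infty$ indeterminacy arises; the non-negativity hypothesis on $f$ makes both of these automatic.
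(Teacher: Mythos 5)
Your proof is correct and is precisely the standard ``indicator $\to$ simple $\to$ monotone convergence'' argument for the change-of-variables (image measure) formula. The paper does not supply its own proof here; it simply cites Bogachev's text, so there is no in-paper argument to compare against, and your write-up is exactly what that citation abbreviates. One point worth being explicit about (which you handle implicitly by working in $[0,\infty]$): the Lebesgue measure on $\bR$ is not finite, so the push-forward $\lambda$ may assign the value $+\infty$ to some Borel subsets of $[0,1]$; the non-negativity of $f$ and the monotone convergence theorem make this harmless, as you note.
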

We then define the reverse of a measure, which corresponds to the substitution $X\mapsto -X$.

\begin{definition}
If $\lambda$ is a Borel measure on $[0,1]$, we let $\lambda^{\R}$ be the Borel measure on $[0,1]$ defined by $\lambda^{\R}(E)=\lambda(\{1-x:x\in E\})$, for Borel subsets $E$ of $[0,1]$.
\end{definition}

The following definition is the crucial tool in our proof of \Cref{thm:FKG}.

\begin{definition}
Fix a constant $c\in [0,1)$. We define the operator $K_{c}(\cdot,\cdot)$ by
$$K_{c}(\lambda,\nu):=\int\int \min\left\{\sqrt{1-c}\cdot xy,\frac{(1-x)(1-y)}{\sqrt{1-c}}\right\}\odif \lambda(x)\odif \nu(y),$$
for Borel measures $\lambda,\nu$ on $[0,1]$. For $c=0$, we omit the subscript and use the shorthand $K\define K_{0}$.
\end{definition}

The next two propositions demonstrate that the operator $K_{c}(\cdot,\cdot)$ is able to capture the variances and covariance of $X$ and $Y$. \Cref{prop:Cov-to-K} is the key place where ``symmetric'' information is extracted from the condition \eqref{eq:FKG-condition}. 

\begin{proposition}\label{prop:Var-to-K}
We have $\Var{X}=K(\alpha,\alpha^{\R})$. Similarly, $\Var{Y}=K(\beta,\beta^{\R})$.
\end{proposition}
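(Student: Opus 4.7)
The plan is to start from Hoeffding's covariance identity and then reduce to the desired integral representation via the elementary pointwise identity
\[
\min(u,v) - uv \;=\; \min\!\left(u(1-v),\, v(1-u)\right) \qquad \text{for } u,v \in [0,1],
\]
followed by the change-of-variable formula in \Cref{prop:change-of-variable}. The pointwise identity is a case check: if $u \le v$ then the left side equals $u(1-v)$, and $u(1-v) \le v(1-u)$ is equivalent to $u \le v$, so the right side also equals $u(1-v)$; the other case is symmetric.

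First I would apply Hoeffding's identity in the form
\[
\Var{X} \;=\; \int_{\mathbb{R}}\!\int_{\mathbb{R}} \bigl[\Pr{X \ge a,\, X \ge b} - \Pr{X \ge a}\Pr{X \ge b}\bigr]\,\odif a\,\odif b,
\]
which is valid under the assumption of a bounded second moment. Since $\Pr{X \ge a,\, X \ge b} = \min(\varphi_X(a), \varphi_X(b))$, the integrand equals $\min(\varphi_X(a), \varphi_X(b)) - \varphi_X(a)\varphi_X(b)$. Because $\varphi_X$ takes values in $[0,1]$, the pointwise identity above rewrites this as
\[
\Var{X} \;=\; \int_{\mathbb{R}}\!\int_{\mathbb{R}} \min\!\bigl(\varphi_X(a)(1-\varphi_X(b)),\; (1-\varphi_X(a))\varphi_X(b)\bigr)\,\odif a\,\odif b.
\]

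Next I would push the two one-dimensional integrals through $\varphi_X$ using \Cref{prop:change-of-variable}. The inner integral against $\odif b$ becomes an integral of $\min(\varphi_X(a)\cdot(1-y),\,(1-\varphi_X(a))\cdot y)$ against $\odif \alpha(y)$ after substituting $y = \varphi_X(b)$; performing the reflection $y \mapsto 1-y$ converts this to an integral against $\odif \alpha^{\R}(y)$ of $\min(\varphi_X(a)\,y,\,(1-\varphi_X(a))(1-y))$. Applying the change-of-variable formula once more to the outer integral with $x = \varphi_X(a)$ produces exactly
\[
\int_0^1\!\int_0^1 \min\!\bigl(xy,\,(1-x)(1-y)\bigr)\,\odif \alpha(x)\,\odif \alpha^{\R}(y) \;=\; K(\alpha, \alpha^{\R}),
\]
proving the first claim. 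The statement for $\Var{Y}$ follows by the identical argument with $(Y, \beta)$ in place of $(X, \alpha)$.

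The only subtle point is handling the at-most-countable set of atoms of $X$: on such a set $\Pr{X \ge a}$ and $\Pr{X > a}$ disagree, but only on a Lebesgue-null set of $a$, so the integrals are unchanged and the application of \Cref{prop:change-of-variable} (which is stated for Borel maps of the Lebesgue measure) goes through without modification. I expect no other obstacle; the heart of the argument is simply recognizing that Hoeffding's identity and the pointwise rewrite of $\min(u,v)-uv$ together put $\Var{X}$ into the symmetric form that $K(\cdot,\cdot)$ was designed to capture.
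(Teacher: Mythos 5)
Your proposal is correct and follows essentially the same route as the paper's proof: both start from the Hoeffding/layer-cake identity $\Var{X}=\int\int\big(\Pr{X\ge a,X\ge b}-\Pr{X\ge a}\Pr{X\ge b}\big)\,\odif a\,\odif b$, use the pointwise identity $\min(u,v)-uv=\min\{u(1-v),v(1-u)\}$ (applied implicitly in the paper), push forward via \Cref{prop:change-of-variable}, and reflect one variable to produce $\alpha^{\R}$. The one small quibble is that your closing caveat about atoms is unnecessary: the argument never compares $\Pr{X\ge a}$ with $\Pr{X>a}$, and \Cref{prop:change-of-variable} is applied to the fixed Borel map $\varphi_X(a)=\Pr{X\ge a}$, so no null-set bookkeeping is actually required.
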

\begin{proof}
Writing expected values as integrations of cumulative distribution functions (the ``layer cake representation''), we have 
\begin{align*}
\Var{X}&=\Ex{X^{2}}-\Ex{X}^{2}\\
&=\int_{-\infty}^{\infty}\int_{-\infty}^{\infty}\Big(\Pr{X\geq a,X\geq b}-\Pr{X\geq a}\Pr{X\geq b}\Big)\odif a\odif b\\
&=\int_{-\infty}^{\infty}\int_{-\infty}^{\infty}\min\Big\{\Pr{X\geq a}\Big(1-\Pr{X\geq b}\Big), \Pr{X\geq b}\Big(1-\Pr{X\geq a}\Big)\Big\}\odif a\odif b\\
&=\int_{0}^{1}\int_{0}^{1}\min\{x(1-y),y(1-x)\}\odif \alpha(x)\odif \alpha(y)\\
&=\int_{0}^{1}\int_{0}^{1}\min\{xy,(1-x)(1-y)\}\odif \alpha(x)\odif \alpha^{\R}(y)=K(\alpha,\alpha^{\R}),
\end{align*}
where the fourth equality above follows from \Cref{prop:change-of-variable}.
\end{proof}

\begin{proposition}\label{prop:Cov-to-K}
Assuming \eqref{eq:FKG-condition}, we have $\Cov{X}{Y}\geq -\sqrt{1-c}\cdot K_{c}(\alpha,\beta)$.
\end{proposition}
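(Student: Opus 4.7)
The plan is to start from Hoeffding's covariance identity, then obtain two complementary pointwise lower bounds on the integrand (one from the hypothesis \eqref{eq:FKG-condition} and one from the trivial Fréchet--Hoeffding-type inclusion--exclusion bound), and finally translate the resulting integral on $\bR^2$ to an integral against $\alpha \otimes \beta$ on $[0,1]^2$ using \Cref{prop:change-of-variable}.

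First, I would invoke Hoeffding's identity, which under the bounded second moment assumption gives
\[
\Cov{X}{Y}=\int_{-\infty}^{\infty}\int_{-\infty}^{\infty}\Big(\Pr{X\geq a,\,Y\geq b}-\Pr{X\geq a}\Pr{Y\geq b}\Big)\odif a\odif b.
\]
Next, I would establish two lower bounds on the integrand. From the hypothesis \eqref{eq:FKG-condition},
\[
\Pr{X\geq a,\,Y\geq b}-\Pr{X\geq a}\Pr{Y\geq b}\;\geq\;-(1-c)\,\Pr{X\geq a}\Pr{Y\geq b}.
\]
From the elementary inclusion--exclusion inequality $\Pr{X\geq a,\,Y\geq b}\geq \Pr{X\geq a}+\Pr{Y\geq b}-1$ (applied to the complementary events), a short algebraic rearrangement gives
\[
\Pr{X\geq a,\,Y\geq b}-\Pr{X\geq a}\Pr{Y\geq b}\;\geq\;-(1-\Pr{X\geq a})(1-\Pr{Y\geq b}).
\]
Taking the better (larger) of these two lower bounds, the integrand is at least $-\min\{(1-c)\Pr{X\geq a}\Pr{Y\geq b},\,(1-\Pr{X\geq a})(1-\Pr{Y\geq b})\}$.

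Plugging this into Hoeffding's identity and applying \Cref{prop:change-of-variable} in each coordinate (exactly as in the proof of \Cref{prop:Var-to-K}) converts the double integral over $\bR^2$ into an integral against $\alpha\otimes\beta$ on $[0,1]^2$:
\[
\Cov{X}{Y}\;\geq\;-\int_{0}^{1}\int_{0}^{1}\min\Big\{(1-c)\,xy,\ (1-x)(1-y)\Big\}\odif\alpha(x)\odif\beta(y).
\]
Finally, factoring $\sqrt{1-c}$ out of each term inside the $\min$ (using $(1-c)xy=\sqrt{1-c}\cdot\sqrt{1-c}\,xy$ and $(1-x)(1-y)=\sqrt{1-c}\cdot(1-x)(1-y)/\sqrt{1-c}$) reproduces exactly the kernel defining $K_c$, yielding the claimed bound $\Cov{X}{Y}\geq -\sqrt{1-c}\cdot K_c(\alpha,\beta)$.

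The only real subtlety\,---\,and the step I expect to require the most care\,---\,is pairing the two lower bounds correctly so that the coefficients match the asymmetric form $\sqrt{1-c}\,xy$ vs.\ $(1-x)(1-y)/\sqrt{1-c}$ inside the definition of $K_c$; once this ``symmetric rebalancing'' is spotted, the rest of the argument is bookkeeping via Hoeffding's identity and the push-forward change of variable.
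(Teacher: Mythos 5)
Your proof is correct and follows essentially the same route as the paper: the covariance representation via Hoeffding's identity (what the paper calls the layer cake representation), the two complementary lower bounds on the integrand (one from the hypothesis, one from the union/Fréchet--Hoeffding bound), the change of variable via \Cref{prop:change-of-variable}, and the final $\sqrt{1-c}$ rebalancing to match the kernel of $K_c$. No meaningful differences.
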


\begin{proof}
In a similar way to the proof of \Cref{prop:Var-to-K}, we have
\begin{align}
\Cov{X}{Y}&=\Ex{XY}-\Ex{X}\Ex{Y}\nonumber\\
&=\int_{-\infty}^{\infty}\int_{-\infty}^{\infty}\Big(\Pr{X\geq a,Y\geq b}-\Pr{X\geq a}\Pr{Y\geq b}\Big)\odif a\odif b.\label{eq:Cov-original}
\end{align}
Note that on one hand, by \eqref{eq:FKG-condition} we have
\begin{equation}\label{eq:Cov-bound-1}
\Pr{X\geq a,Y\geq b}-\Pr{X\geq a}\Pr{Y\geq b}\geq -(1-c)\Pr{X\geq a}\Pr{Y\geq b}.
\end{equation}
On the other hand, by union bound we have
\begin{align}
\Pr{X\geq a,Y\geq b}-\Pr{X\geq a}\Pr{Y\geq b}
&\geq 1-\Pr{X<a}-\Pr{Y<b}-\Pr{X\geq a}\Pr{Y\geq b}\nonumber\\
&=-\Big(1-\Pr{X\geq a}\Big)\Big(1-\Pr{Y\geq b}\Big).\label{eq:Cov-bound-2}
\end{align}
Plugging \eqref{eq:Cov-bound-1} and \eqref{eq:Cov-bound-2} into \eqref{eq:Cov-original}, we have
\begin{align*}
\Cov{X}{Y}&\geq
-\int_{-\infty}^{\infty}\int_{-\infty}^{\infty}\min\Big\{(1-c)\Pr{X\geq a}\Pr{Y\geq b},\Big(1-\Pr{X\geq a}\Big)\Big(1-\Pr{Y\geq b}\Big)\Big\}\odif a \odif b\\
&=-\int_{0}^{1}\int_{0}^{1}\min\{(1-c)xy,(1-x)(1-y)\}\odif \alpha(x) \odif \beta(y)\\
&=-\sqrt{1-c}\cdot K_{c}(\alpha,\beta),
\end{align*}
where the first equality above follows from \Cref{prop:change-of-variable}.
\end{proof}

We can now reduce \Cref{thm:FKG} to the following more ``symmetric'' lemma.

\begin{lemma}\label{lem:FKG-main}
For any two Borel measures $\alpha,\beta$, and any constant $c\in[0,1)$ we have
$$K_{c}(\alpha,\beta)^{2}\leq K(\alpha,\alpha^{\R})\cdot K(\beta,\beta^{\R}).$$
\end{lemma}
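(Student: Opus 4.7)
My plan is to prove the lemma by recasting it as a Cauchy--Schwarz inequality in $L^{2}([0,1])$, by constructing integral representations of the two kernels under which the RHS is a squared $L^{2}$-norm and the LHS is an inner product of two vectors of equal $L^{2}$-norm.

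After the substitution $y \mapsto 1-y$ (in the integrations against $\alpha^{\R}$ and $\beta^{\R}$), the RHS involves the symmetric Brownian-bridge kernel $\tilde k(x,y) := \min(x,y) - xy = \min\{x(1-y),(1-x)y\}$, which admits the classical representation $\tilde k(x,y) = \int_0^1 h_u(x)\,h_u(y)\odif u$ with $h_u(x) := \ind{x \ge u} - x$. Defining $T\mu(u) := \int h_u(x)\odif \mu(x)$, this gives $K(\alpha,\alpha^{\R}) = \|T\alpha\|_{L^2}^2$, and via the symmetry $\tilde k(1-x,1-y) = \tilde k(x,y)$ also $K(\beta,\beta^{\R}) = \|T\beta^{\R}\|_{L^2}^2$. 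For the LHS kernel $k_c(x,y) := \min\{\sqrt{1-c}\,x(1-y),\,(1-x)y/\sqrt{1-c}\}$, I compute $\partial_x k_c$ in the two regions separated by the crossover curve $x = x^*(y) := y/(1 - c(1-y))$ and integrate in $x$ from $0$; this yields a one-sided decomposition $k_c(x,y) = \int_0^1 h_u(x)\,g_c(y,u)\odif u$ with
\[
g_c(y,u) \;=\; \sqrt{1-c}\,(1-y)\,\ind{u < x^*(y)} \;-\; \frac{y}{\sqrt{1-c}}\,\ind{u > x^*(y)}.
\]
The ``calibration'' identity $\sqrt{1-c}\,(1-y)\,x^*(y) = y(1-x^*(y))/\sqrt{1-c}$ ensures $\int_0^1 g_c(y,u)\odif u = 0$, which is precisely what is needed for this decomposition to actually equal $k_c$ (rather than $k_c$ plus an affine correction in $x$). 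Setting $T_c\nu(u) := \int g_c(y,u)\odif \nu(y)$, we then have $K_c(\alpha,\beta) = \langle T\alpha,\,T_c\beta^{\R}\rangle_{L^2}$.

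The heart of the argument --- and the step I expect to be the main obstacle --- is a ``hidden isometry'' between $T$ and $T_c$: namely, $\|T_c\nu\|_{L^2} = \|T\nu\|_{L^2}$ for every Borel measure $\nu$. By Fubini and bilinearity this reduces to verifying the pointwise identity $\int_0^1 g_c(z,u)\,g_c(w,u)\odif u = \tilde k(z,w)$ for all $z,w \in [0,1]$. Taking WLOG $z \le w$ (so that $x^*(z) \le x^*(w)$, since $x^*$ is increasing), the integrand is piecewise constant on the three intervals $[0,x^*(z))$, $(x^*(z),x^*(w))$, $(x^*(w),1]$; using the closed form $1-x^*(y) = (1-c)(1-y)/(1-c(1-y))$ together with the calibration identity above, the three contributions collapse algebraically to exactly $z(1-w) = \tilde k(z,w)$. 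The fact that this identity holds with equality --- rather than as an inequality --- is exactly what enables Cauchy--Schwarz to deliver the lemma with no loss of constants. Once the isometry is in hand, applying Cauchy--Schwarz to $K_c(\alpha,\beta) = \langle T\alpha,\,T_c\beta^{\R}\rangle_{L^2}$ yields
\[
K_c(\alpha,\beta)^2 \;\le\; \|T\alpha\|_{L^2}^2\,\|T_c\beta^{\R}\|_{L^2}^2 \;=\; \|T\alpha\|_{L^2}^2\,\|T\beta^{\R}\|_{L^2}^2 \;=\; K(\alpha,\alpha^{\R})\cdot K(\beta,\beta^{\R}),
\]
as required.
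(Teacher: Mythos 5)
Your proof is correct, and it takes a genuinely different route from the paper's. The paper establishes the integral representations
\[
K(\alpha,\alpha^{\R})=\int_{0}^{1}\!\!\int_{0}^{s}h(r,s)\left(\int_{r}^{s}\sqrt{1-cx}\,\odif\alpha(x)\right)^{2}\odif r\odif s,\quad
K(\beta,\beta^{\R})=\int_{0}^{1}\!\!\int_{0}^{s}h(r,s)\left(\int_{q(s)}^{q(r)}\sqrt{1-cx}\,\odif\beta(x)\right)^{2}\odif r\odif s,
\]
using a two-parameter kernel $h(r,s)$ with $\sqrt{1-cx}$ weights baked in, applies Cauchy--Schwarz over the weighted region $\{r\le s\}$, and then verifies via the involution $q(x)=(1-x)/(1-cx)$ that the resulting cross term collapses to $K_{c}(\alpha,\beta)$. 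You instead work in the un-weighted space $L^{2}([0,1])$ with the Brownian-bridge feature map $h_{u}(x)=\ind{x\ge u}-x$, represent the right-hand quantities as $\|T\alpha\|^{2}$ and $\|T\beta^{\R}\|^{2}$, and the left-hand quantity as $\langle T\alpha,T_{c}\beta^{\R}\rangle$, and then isolate the real content of the lemma in a single clean norm identity $\|T_{c}\nu\|=\|T\nu\|$. This is a tidier and more conceptual packaging: the paper's $q$-involution trick and your calibration/isometry step play the same role (note $x^{*}(y)=q(1-y)$, so the algebraic engine is the same), but you decouple Cauchy--Schwarz from the algebra, whereas the paper interleaves them in one long display. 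Your formulation also makes the equality cases transparent ($T\alpha$ proportional to $T_{c}\beta^{\R}$), which the paper's version leaves implicit. One minor point you glossed over and should note if you write this up fully: for general Borel measures $\alpha,\beta$ (which in this application are pushforwards of Lebesgue measure and may have infinite mass at $\{0,1\}$), the individual integrals defining $T\mu(u)$ need the same cancellation-at-the-boundary argument that the paper's Fubini-Tonelli step uses; this is a technicality rather than a gap in the idea.
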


\begin{proof}[Proof of \Cref{thm:FKG} assuming \Cref{lem:FKG-main}]
Using \Cref{prop:Cov-to-K}, \Cref{lem:FKG-main} and \Cref{prop:Var-to-K} successively, we have
\begin{align*}
\Cov{X}{Y}&\geq -\sqrt{1-c}\cdot K_{c}(\alpha,\beta) \\
&\geq -\sqrt{1-c}\cdot\sqrt{K(\alpha,\alpha^{\R})K(\beta,\beta^{R})}\\
&=-\sqrt{(1-c)\cdot\Var{X}\cdot\Var{Y}}.\qedhere
\end{align*}
\end{proof}

The rest of this section is devoted to proving \Cref{lem:FKG-main}.

\subsection{Technical preparations}\label{subsec:FKG-technical}

Throughout this subsection, we let $c$ be a fixed constant in the range $[0,1)$. We will define two functions $h$ and $q$ that will be useful in our proof of \Cref{lem:FKG-main}.

\begin{definition}
We define a function $h:[0,1]\times[0,1]\rightarrow \mathbb{R}$ by
$$h(x,y):=-\left(\odv*{\frac{x}{\sqrt{1-cx}}}{x}\right)\left(\odv*{\frac{1-y}{\sqrt{1-cy}}}{y}\right).$$
\end{definition}

\begin{proposition}\label{prop:integration-h}
The function $h$ is continuous and nonnegative on $[0,1]\times[0,1]$ and for all $x,y\in[0,1]$,
$$\int_{0}^{x}\int_{y}^{1}h(r,s)\odif s\odif r=\frac{x(1-y)}{\sqrt{(1-cx)(1-cy)}}.$$
\end{proposition}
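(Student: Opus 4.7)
The plan is a direct verification exercise. The function $h$ has a product structure
$h(x,y) = -\varphi'(x)\,\psi'(y)$ where
$\varphi(x) = x/\sqrt{1-cx}$ and $\psi(y) = (1-y)/\sqrt{1-cy}$,
so both claims reduce to understanding these two single-variable functions.

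For the integration identity, I would simply apply the Fundamental Theorem of Calculus in each variable separately. Since the integrand factors as a function of $r$ times a function of $s$, the double integral equals
$\left(-\int_0^x \varphi'(r)\,dr\right)\left(\int_y^1 \psi'(s)\,ds\right)
= -\bigl(\varphi(x)-\varphi(0)\bigr)\bigl(\psi(1)-\psi(y)\bigr).$
Since $\varphi(0)=0$ and $\psi(1)=0$, this collapses to
$\varphi(x)\,\psi(y) = x(1-y)/\sqrt{(1-cx)(1-cy)}$, as required.

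The nonnegativity and continuity claim requires computing $\varphi'$ and $\psi'$ explicitly via the quotient rule. A short calculation gives
$\varphi'(x) = \frac{1 - cx/2}{(1-cx)^{3/2}}$ and
$\psi'(y) = -\frac{1 - c(1+y)/2}{(1-cy)^{3/2}}.$
Thus $h(x,y) = \frac{(1-cx/2)\bigl(1-c(1+y)/2\bigr)}{(1-cx)^{3/2}(1-cy)^{3/2}}$. Continuity on $[0,1]^2$ is immediate because the denominators never vanish (using $c<1$ so that $1-cx,\,1-cy \ge 1-c > 0$). Nonnegativity follows by checking the two numerator factors: $1 - cx/2 \ge 1 - 1/2 > 0$ since $cx < 1$, and similarly $1 - c(1+y)/2 \ge 1 - c > 0$ since $(1+y)/2 \le 1$.

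There is no real obstacle here: the proposition is essentially just the product of a calculus computation with a one-line sign-tracking argument. The only mild care is in verifying nonnegativity, where the definition's leading minus sign combines with the sign of $\psi'$ (which is negative throughout $[0,1]$, since $\psi$ decreases from $\psi(0)=1/\sqrt{1}=1$ to $\psi(1)=0$) to produce a nonnegative function; the hypothesis $c \in [0,1)$ is used precisely to keep both the numerator factors and the denominators strictly positive.
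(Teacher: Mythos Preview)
Your proposal is correct and follows essentially the same approach as the paper: the paper's proof also just says the integration identity ``follows from definition'' (i.e., the Fundamental Theorem of Calculus applied coordinatewise, exactly as you describe) and then computes the explicit closed form $h(x,y)=\frac{(2-cx)(2-c-cy)}{4(1-cx)^{3/2}(1-cy)^{3/2}}$, which matches your expression after clearing the factors of $2$, to read off continuity and nonnegativity.
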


\begin{proof}
The integration of $h$ follows from definition, and straightforward calculation shows
\[
h(x,y)=\frac{(2-cx)(2-c-cy)}{4(1-cx)^{3/2}(1-cy)^{3/2}}\geq 0.\qedhere
\]
\end{proof}

\begin{definition}
We define a function $q:[0,1]\rightarrow[0,1]$ by
$$q(x):=\frac{1-x}{1-cx}.$$
\end{definition}

\begin{proposition}\label{prop:property-q}
The function $q$ is continuous and monotone decreasing, and $q(q(x))=x$. For every $x,y\in [0,1]$, we also have
\[
\frac{x(1-y)}{\sqrt{(1-cx)(1-cy)}}=\frac{q(y)(1-q(x))}{\sqrt{(1-cq(x))(1-cq(y)}}.
\]
\end{proposition}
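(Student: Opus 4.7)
The plan is to verify the three properties of $q$ by direct algebraic computation; this is a routine setup lemma establishing a useful involution, and there is no real obstacle to overcome.

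For the elementary properties, since $c \in [0,1)$ the denominator satisfies $1 - cx \geq 1 - c > 0$ on $[0,1]$, so $q$ is continuous there. I would then compute
\[
q'(x) = \frac{-(1-cx)+c(1-x)}{(1-cx)^{2}} = \frac{c-1}{(1-cx)^{2}},
\]
which is strictly negative, giving monotone decrease. For the involution property $q(q(x))=x$, one substitutes $q(x)=\tfrac{1-x}{1-cx}$ into $q$ and clears denominators: the numerator becomes $(1-cx)-(1-x) = x(1-c)$, the denominator becomes $(1-cx)-c(1-x) = 1-c$, and the ratio simplifies to $x$.

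For the identity, the key algebraic observation is that both $1 - q(x)$ and $1 - cq(x)$ have particularly clean forms:
\[
1 - q(x) = \frac{x(1-c)}{1-cx}, \qquad 1 - cq(x) = \frac{1-c}{1-cx},
\]
with analogous expressions in $y$. Plugging these into the right-hand side of the identity, the numerator becomes $q(y)(1-q(x)) = \frac{x(1-y)(1-c)}{(1-cx)(1-cy)}$, while the denominator becomes $\sqrt{(1-cq(x))(1-cq(y))} = \frac{1-c}{\sqrt{(1-cx)(1-cy)}}$. Taking the ratio, the factors of $1-c$ cancel and one recovers the left-hand side.

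Conceptually, writing $x' = q(y)$ and $y' = q(x)$ (so that $(x,y) \mapsto (x',y')$ is an involution by the first part, since $q \circ q$ is the identity), the identity states that the kernel $F(x,y)=\frac{x(1-y)}{\sqrt{(1-cx)(1-cy)}}$ satisfies $F(x,y) = F(x',y')$. This is presumably the symmetry that will be exploited in the proof of \Cref{lem:FKG-main}, likely via a change-of-variable argument pairing up the measures $\alpha$ and $\alpha^{\R}$ (and similarly for $\beta$).
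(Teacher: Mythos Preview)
Your proof is correct and takes essentially the same approach as the paper: direct algebraic verification via the explicit formulas for $1-q(x)$ and $1-cq(x)$. The paper organizes the computation slightly differently by first isolating the single-variable identity $\frac{x}{\sqrt{1-cx}}=\frac{1}{\sqrt{1-c}}\cdot\frac{1-q(x)}{\sqrt{1-cq(x)}}$ and its $x\mapsto q(y)$ variant before multiplying them, since these factored forms are cited separately later in the proof of \cref{lem:FKG-main}.
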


\begin{proof}
The definition of $q$ reduces to $1-x-q(x)+cxq(x)=0$, from which it is clear that $q$ is an involution. Furthermore, we have the equivalent identities $(1-q(x))=x(1-cq(x))$, and $(1-cx)(1-q(x))=(1-c)x$. Multiplying the two identities gives
$$(1-c)\cdot x^{2}(1-cq(x))=(1-q(x))^{2}(1-cx),$$
and hence
\begin{equation}\label{eq:property-q-1}
\frac{x}{\sqrt{1-cx}}=\frac{1}{\sqrt{1-c}}\cdot\frac{1-q(x)}{\sqrt{1-cq(x)}}.
\end{equation}
Substituting $q(y)$ for $x$ gives
\begin{equation}\label{eq:property-q-2}
\frac{1}{\sqrt{1-c}}\cdot\frac{1-y}{\sqrt{1-cy}}=\frac{q(y)}{\sqrt{1-cq(y)}}.
\end{equation}
Multiplying the above two identities yields the conclusion.
\end{proof}

\subsection{Proof of \Cref{lem:FKG-main}}

Now we are ready to prove \Cref{lem:FKG-main}.

\begin{proof}[Proof of \Cref{lem:FKG-main}]

Using \Cref{prop:integration-h} and the Fubini-Tonelli theorem, we have
\begin{align}
K(\alpha,\alpha^{\R})&=\int_{0}^{1}\int_{0}^{1}\min\{x(1-y),y(1-x)\}\odif \alpha(x)\odif \alpha(y)
\nonumber\\
&=\int_{0}^{1}\int_{0}^{1}\sqrt{(1-cx)(1-cy)}\int_{0}^{\min\{x,y\}}\int_{\max\{x,y\}}^{1}h(r,s)\odif s\odif r\odif \alpha(x)\odif \alpha(y)\nonumber\\
&=\int_{0}^{1}\int_{0}^{s}h(r,s)\int_{r}^{s}\int_{r}^{s}\sqrt{(1-cx)(1-cy)}\odif \alpha(x)\odif \alpha(y)\odif r\odif s\nonumber\\
&=\int_{0}^{1}\int_{0}^{s}h(r,s)\left(\int_{r}^{s}\sqrt{1-cx}\odif \alpha(x)\right)^{2}\odif r\odif s.\label{eq:FKG-trans-1}
\end{align}

Note that \Cref{prop:integration-h,prop:property-q} together imply
\begin{align*}
\int_{0}^{\min\{x,y\}}\int_{\max\{x,y\}}^{1}h(r,s)\odif s\odif r&=\int_{0}^{q(\max\{x,y\})}\int_{q(\min\{x,y\})}^{1}h(r,s)\odif s\odif r\\
&=\int_{0}^{\min\{q(x),q(y)\}}\int_{\max\{q(x),q(y)\}}^{1}h(r,s)\odif s\odif r.
\end{align*}
Therefore, we can modify the calculation in \eqref{eq:FKG-trans-1} and obtain
\begin{align}
K(\beta,\beta^{\R})&=\int_{0}^{1}\int_{0}^{1}\min\{x(1-y),y(1-x)\}\odif \beta(x)\odif \beta(y)
\nonumber\\
&=\int_{0}^{1}\int_{0}^{1}\sqrt{(1-cx)(1-cy)}\int_{0}^{\min\{q(x),q(y)\}}\int_{\max\{q(x),q(y)\}}^{1}h(r,s)\odif s\odif r\odif\beta(x)\odif \beta(y)\nonumber\\
&=\int_{0}^{1}\int_{0}^{s}h(r,s)\int_{0}^{1}\int_{0}^{1}\ind{r\leq q(x),q(y)\leq s}\sqrt{(1-cx)(1-cy)}\odif \beta(x)\odif \beta(y)\odif r\odif s\nonumber\\
&=\int_{0}^{1}\int_{0}^{s}h(r,s)\int_{0}^{1}\int_{0}^{1}\ind{q(s)\leq x,y\leq q(r)}\sqrt{(1-cx)(1-cy)}\odif \beta(x)\odif \beta(y)\odif r\odif s\nonumber\\
&=\int_{0}^{1}\int_{0}^{s}h(r,s)\left(\int_{q(s)}^{q(r)}\sqrt{1-cx}\odif \beta(x)\right)^{2}\odif r\odif s.\label{eq:FKG-trans-2}
\end{align}

Now we can apply Cauchy-Schwarz to the product of \eqref{eq:FKG-trans-1} and \eqref{eq:FKG-trans-2}, followed by another Fubini-Tonelli transformation to obtain the conclusion. 
\begin{align*}
&\quad \sqrt{K(\alpha,\alpha^{\R})K(\beta,\beta^{\R})}\\
&\geq 
\int_{0}^{1}\int_{0}^{s}h(r,s)\left(\int_{r}^{s}\sqrt{1-cx}\odif \alpha(x)\right)\left(\int_{q(s)}^{q(r)}\sqrt{1-cy}\odif \beta(y)\right)\odif r\odif s\\
&=\int_{0}^{1}\int_{0}^{1} \sqrt{(1-cx)(1-cy)}\int_{0}^{1}\int_{0}^{1}\ind{r\leq x\leq s,\;q(s)\leq y\leq q(r)}h(r,s)\odif r\odif s\odif \alpha(x)\odif \beta(y)\\
&=\int_{0}^{1}\int_{0}^{1} \sqrt{(1-cx)(1-cy)}\int_{0}^{1}\int_{0}^{1}\ind{r\leq x\leq s,\;r\leq q(y)\leq s}h(r,s)\odif r\odif s\odif \alpha(x)\odif \beta(y)\\
&=\int_{0}^{1}\int_{0}^{1}\ind{x\leq q(y)}\sqrt{(1-cx)(1-cy)}\cdot\frac{x(1-q(y))}{\sqrt{(1-cx)(1-cq(y))}}\odif \alpha(x)\odif \beta(y)\\
&\quad + \int_{0}^{1}\int_{0}^{1}\ind{x> q(y)}\sqrt{(1-cx)(1-cy)}\cdot\frac{q(y)(1-x)}{\sqrt{(1-cx)(1-cq(y))}}\odif \alpha(x)\odif \beta(y)\\
&=\int_{0}^{1}\int_{0}^{1}\ind{x\leq q(y)}\cdot \sqrt{1-c}\cdot xy\odif \alpha(x)\odif \beta(y) \tag{using equation \eqref{eq:property-q-1}}\\
&\quad +\int_{0}^{1}\int_{0}^{1}\ind{x> q(y)}\cdot\frac{(1-x)(1-y)}{\sqrt{1-c}}\odif \alpha(x)\odif \beta(y) \tag{using equation \eqref{eq:property-q-2}}\\
&=\int_{0}^{1}\int_{0}^{1} \min\left\{\sqrt{1-c}\cdot xy,\frac{(1-x)(1-y)}{\sqrt{1-c}}\right\}\odif \alpha(x)\odif \beta(y)=K_{c}(\alpha,\beta).
\end{align*}
In the second to last equality above, we used the fact that
\[
\sqrt{1-c}\cdot xy\leq \frac{(1-x)(1-y)}{\sqrt{1-c}} \quad\text{if and only if}\quad x\leq q(y). \qedhere
\]
\end{proof}

\section{From directed to undirected isoperimetry}\label{sec:directed-to-undirected}

In this section, we provide a proof of \Cref{thm:FKG-to-spectral-gap} assuming \Cref{thm:directed}. Throughout the section, we fix a monotone set $A\subseteq\{0,1\}^{n}$ with at least 2 elements.

\subsection{Domain extension}

Since the target result, \Cref{thm:FKG-to-spectral-gap}, focuses solely on the subset $A$ of the hypercube, while \Cref{thm:directed} applies only to functions defined on the entire hypercube, we first introduce a simple method for extending function domains to the whole hypercube.

\begin{definition}\label{def:domain-extension}
We define an operator $T$ that extends any function $f:A\rightarrow\mathbb{R}$ to the function $T[f]:\{0,1\}^{n}\rightarrow\mathbb{R}$ defined by
$$T[f](x)=\begin{cases}
\min_{y\in A}f(y), &\text{if }x\not\in A,\\
f(x),&\text{if }x\in A.
\end{cases}$$
\end{definition}

By defining the value of the function outside of the original domain $A$ to be sufficiently small, the extension operator enjoys the following two useful properties that allow us to access the power of \Cref{thm:directed}.

\begin{proposition}\label{prop:Undirected-A-to-directed-full}
For every function $f:A\rightarrow\bR$ we have
$$\mu(A)\cdot \cE_{A}(f)=\cE^{-}(T[f])+\cE^{-}(T[-f]).$$
\end{proposition}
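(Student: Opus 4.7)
The plan is to prove the identity by an edge-by-edge case analysis on the hypercube $\{0,1\}^n$, reducing both sides to the same weighted sum over ``internal'' edges of $A$.

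First I would unpack both sides as sums over the directed edges of $H_n$, that is, ordered pairs $(u,v)$ with $u \prec v$ differing in a single coordinate. On the right-hand side, for such an edge in direction $i$, both choices $x = u$ and $x = v$ in the expectation defining $\cE^-(T[f])$ produce the same term $\min\{0, T[f](v) - T[f](u)\}^2$, and similarly for $\cE^-(T[-f])$. Rewriting, one gets
\[
\cE^{-}(T[f]) + \cE^{-}(T[-f]) = \frac{1}{2 \cdot 2^n} \sum_{u \prec_1 v} \Bigl( \min\{0, T[f](v) - T[f](u)\}^2 + \min\{0, T[-f](v) - T[-f](u)\}^2 \Bigr).
\]
On the left-hand side, each edge with both endpoints in $A$ is counted twice (once from $x=u$, once from $x=v$), each contributing $(f(v)-f(u))^2$, while edges with an endpoint outside $A$ contribute nothing, so
\[
\mu(A)\cdot \cE_A(f) = \frac{|A|}{2^n}\cdot \frac{1}{2|A|}\sum_{u\prec_1 v,\; u,v\in A}(f(v)-f(u))^2 = \frac{1}{2\cdot 2^n}\sum_{u\prec_1 v,\; u,v\in A}(f(v)-f(u))^2.
\]

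Next I would do the case analysis for each directed edge $(u,v)$ with $u \prec v$. \textbf{Case 1: both $u,v \in A$.} Then the two terms on the right are $\min\{0, f(v)-f(u)\}^2 + \min\{0, f(u)-f(v)\}^2 = (f(v)-f(u))^2$, matching the left-hand side. \textbf{Case 2: $u \in A$, $v \notin A$.} This case is excluded by monotonicity of $A$. \textbf{Case 3: $u \notin A$, $v \in A$.} Here $T[f](u) = \min_A f \le f(v) = T[f](v)$ and $T[-f](u) = -\max_A f \le -f(v) = T[-f](v)$, so both $\min\{0,\cdot\}$ terms vanish, as does the left-hand side contribution. \textbf{Case 4: both $u,v \notin A$.} Then $T[f]$ and $T[-f]$ are constant on the edge, so both sides contribute zero. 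Summing these contributions over all edges gives the identity.

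The main step is really just the observation in Case 3 that the extension by the minimum value of $f$ (respectively $-f$) on $A$ ensures every edge crossing from $\{0,1\}^n\setminus A$ into $A$ is an \emph{upward} edge for both $T[f]$ and $T[-f]$, so it contributes nothing to the directed energies $\cE^-(\cdot)$. I do not expect any genuine obstacle: the calculation is entirely combinatorial once the case analysis is set up, and it is precisely the monotonicity of $A$ (ruling out Case 2) together with the choice of extension value in \Cref{def:domain-extension} (neutralizing Case 3) that make the identity exact rather than an inequality.
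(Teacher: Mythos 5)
Your proof is correct and takes essentially the same approach as the paper: both arguments hinge on the observation that, because $A$ is monotone and the extension assigns the minimum value off $A$, edges crossing the boundary of $A$ are automatically monotone for $T[f]$ and $T[-f]$ and so contribute nothing, while internal edges contribute $\min\{0,a\}^2+\min\{0,-a\}^2=a^2$. The paper packages this via an indicator $\ind{x,x^{\oplus i}\in A}$ inside the expectation rather than an explicit four-way case split, but the content is identical.
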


\begin{proof}
Since $T[f]$ is constant on $\{0,1\}^{n}\setminus A$, and since $T[f](x)\leq T[f](y)$ for all $x\in \{0,1\}^{n}\setminus A$ and $y\in A$, we know that for $x\in\{0,1\}^{n}$ and $i\in [n]$,
\[
T[f](x^{i\rightarrow 1})<T[f](x^{i\rightarrow 0})\quad\text{can hold only if}\quad x,x^{\oplus i}\in A.
\]
So we have
\begin{align*}
\cE^{-}(T[f])&=\frac{1}{4}\cdot \Exu{x\in \{0,1\}^{n}}{\sum_{i=1}^{n}\min\left\{0,T[f](x^{i\rightarrow 1})-T[f](x^{i\rightarrow 0})\right\}^{2}}\\
&=\frac{1}{4}\cdot\Exu{x\in \{0,1\}^{n}}{\sum_{i=1}^{n}\min\left\{0,T[f](x^{i\rightarrow 1})-T[f](x^{i\rightarrow 0})\right\}^{2}\cdot\ind{x,x^{\oplus i}\in A}}\\
&=\frac{\mu(A)}{4}\cdot\Exu{x\in A}{\sum_{i=1}^{n}\min\left\{0,f(x^{i\rightarrow 1})-f(x^{i\rightarrow 0})\right\}^{2}\cdot\ind{x^{\oplus i}\in A}}
\end{align*}
Applying the above argument to $T[-f]$ instead of $T[f]$, we obtain
\begin{align*}
\cE^{-}(T[-f])&=\frac{\mu(A)}{4}\cdot\Exu{x\in A}{\sum_{i=1}^{n}\min\left\{0,-f(x^{i\rightarrow 1})+f(x^{i\rightarrow 0})\right\}^{2}\cdot\ind{x^{\oplus i}\in A}}\\
&=\frac{\mu(A)}{4}\cdot\Exu{x\in A}{\sum_{i=1}^{n}\max\left\{0,f(x^{i\rightarrow 1})-f(x^{i\rightarrow 0})\right\}^{2}\cdot\ind{x^{\oplus i}\in A}}.
\end{align*}
Adding the above two equations together yields
\begin{align*}
\cE^{-}(T[f])+\cE^{-}(T[-f])&=\frac{\mu(A)}{4}\cdot\Exu{x\in A}{\sum_{i=1}^{n}\left(f(x^{i\rightarrow 1})-f(x^{i\rightarrow 0})\right)^{2}\cdot\ind{x^{\oplus i}\in A}}\\
&=\mu(A)\cdot \cE_{A}(f). \qedhere
\end{align*}
\end{proof}

\begin{proposition}\label{prop:dist-to-mono-under-extension}
For every function $f:A\rightarrow\mathbb{R}$, there exists a monotone increasing function $g:A\rightarrow\mathbb{R}$ such that 
$$\left\|f-g\right\|_{2}\leq \mu(A)^{-1/2}\cdot\dist_{2}^{\mono}(T[f]),$$ where the $L^{2}$-norm is the norm in the inner product space $L^{2}(A)$.
\end{proposition}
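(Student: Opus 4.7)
The plan is to build $g$ by restricting to $A$ a monotone function on the whole hypercube that is near-optimal for $T[f]$. The set of monotone increasing functions $h : \{0,1\}^{n} \to \bR$ is a closed convex cone inside the finite-dimensional space $L^{2}(\{0,1\}^{n})$, so the infimum in the definition of $\dist_{2}^{\mono}(T[f])$ is attained by some $h^{\ast}$. Define $g \define h^{\ast}|_{A}$. Because $A$ inherits the coordinatewise partial order from $\{0,1\}^{n}$, the monotonicity of $h^{\ast}$ on $\{0,1\}^{n}$ immediately yields monotonicity of $g$ on $A$, so $g$ is a valid candidate.

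The heart of the argument is then the observation that under the extension $T$, the gap between $T[f]$ and $h^{\ast}$ already agrees with the gap between $f$ and $g$ on all of $A$, while the hypercube expectation only underweights it by a factor of $\mu(A)$. Since $T[f](x) = f(x)$ and $g(x) = h^{\ast}(x)$ for every $x \in A$, we can write
\begin{align*}
\|f-g\|_{2}^{2} &= \Exu{x \in A}{\bigl(f(x)-g(x)\bigr)^{2}} = \mu(A)^{-1}\cdot\Exu{x \in \{0,1\}^{n}}{\bigl(T[f](x)-h^{\ast}(x)\bigr)^{2}\cdot \ind{x \in A}} \\
&\leq \mu(A)^{-1}\cdot\Exu{x \in \{0,1\}^{n}}{\bigl(T[f](x)-h^{\ast}(x)\bigr)^{2}} = \mu(A)^{-1}\cdot\dist_{2}^{\mono}(T[f])^{2},
\end{align*}
where the inequality is just dropping the indicator. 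Taking square roots gives the desired bound.

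There is no genuine obstacle here; the proof amounts to unpacking the definitions of $T$, $\dist_{2}^{\mono}$, and the $L^{2}(A)$ norm, together with the fact that extension by the minimum preserves values on $A$. The only technical point is the attainment of the infimum in the definition of $\dist_{2}^{\mono}$, which is automatic because $L^{2}(\{0,1\}^{n})$ is finite-dimensional and the monotone cone is closed; alternatively, one can pick an $\epsilon$-near-optimal $h$, carry out the same chain of inequalities, and send $\epsilon \to 0$ at the end.
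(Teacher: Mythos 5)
Your proof is correct and follows essentially the same route as the paper's: attain the infimum by a minimizing monotone $h^\ast$ on $\{0,1\}^n$ (using closedness in a finite-dimensional space), restrict to $A$, and observe that passing from the expectation over $A$ to the expectation over $\{0,1\}^n$ costs exactly a factor $\mu(A)^{-1}$. No discrepancies to flag.
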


\begin{proof}
Since the collection of all monotone increasing real-valued functions on $\{0,1\}^{n}$ form a closed set in the Euclidean space $\mathbb{R}^{\{0,1\}^{n}}$, there exists a monotone increasing function $\widetilde{g}:\{0,1\}^{n}\rightarrow\mathbb{R}$ such that $\left\|T[f]-\widetilde{g}\right\|_{2}=\dist_{2}^{\mono}(T[f])$, where the $L^{2}$-norm is the norm in the space $L^{2}(\{0,1\}^{n})$. Now note that the restriction $g\define \widetilde{g}|_{A}$ is a monotone increasing function on $A$. Therefore,
\begin{align*}
\left\|f-g\right\|_{2}^{2}&=\Exu{x\in A}{(f(x)-g(x))^{2}}=\Exu{x\in A}{\Big(T[f](x)-\widetilde{g}(x)\Big)^{2}}\\
&\leq \mu(A)^{-1}\cdot\Exu{x\in \{0,1\}^{n}}{\Big(T[f](x)-\widetilde{g}(x)\Big)^{2}}=\mu(A)^{-1}\cdot\dist_{2}^{\mono}(T[f])^{2}.\qedhere
\end{align*}
\end{proof}

\subsection{Correlation analysis}

In this subsection, we lay some groundwork about correlation of functions (or equivalently, random variables) that will help prove \Cref{thm:FKG-to-spectral-gap}. We begin with the following natural definition of correlation ratios.

\begin{definition}\label{def:rho}
For non-constant functions $g,h:A\rightarrow\mathbb{R}$, we define
\[
\rho(g,h)\define \frac{\Covs{A}{g}{h}}{\sqrt{\Vars{A}{g}\cdot\Vars{A}{h}}}.
\]
\end{definition}

The following triangle-inequality-type lemma is going to be important in the proof of \Cref{thm:FKG-to-spectral-gap}. Conceptually, the lemma says that if functions $g$ and $h$ on $A$ are not very correlated with each other (that is, $\rho(g,h)$ is bounded away from 1), then $f$ cannot be very correlated with both $g$ and $h$ at the same time. In particular, we will later use the lemma in the case where $g$ is a monotone increasing function and $h$ is a monotone decreasing function, which cannot be very correlated if $\delta(A)$ is bounded away from $-1$.

\begin{proposition}\label{prop:trigonometry}
Consider three non-constant functions $f,g,h:A\rightarrow\bR$. We have
\[
\max\{0,\rho(f,g)\}^{2}+\max\{0,\rho(f,h)\}^{2}\leq 1+\max\{0,\rho(g,h)\}.
\]
\end{proposition}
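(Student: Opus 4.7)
The plan is to recast the inequality in the inner product space $L^{2}(A)$ equipped with the uniform measure on $A$. Writing $\tilde\phi := \phi - \Exu{x \in A}{\phi(x)}$ for the centered version of a function $\phi$, and setting $u := \tilde f/\|\tilde f\|_{2}$, $v := \tilde g/\|\tilde g\|_{2}$, $w := \tilde h/\|\tilde h\|_{2}$, I obtain three unit vectors (well-defined because $f,g,h$ are non-constant) satisfying $\alpha := \inp{u}{v} = \rho(f,g)$, $\beta := \inp{u}{w} = \rho(f,h)$, and $\gamma := \inp{v}{w} = \rho(g,h)$. The target becomes the purely geometric statement
\[
\max\{0,\alpha\}^{2} + \max\{0,\beta\}^{2} \;\le\; 1 + \max\{0,\gamma\}
\]
for any $\alpha,\beta,\gamma \in [-1,1]$ realizable as pairwise inner products of three unit vectors in a Euclidean space.

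Several cases dispatch themselves. If one of $\alpha,\beta$ is non-positive, the LHS is at most $\max\{\alpha^{2},\beta^{2}\} \le 1 \le $ RHS. If both are positive and $\alpha^{2}+\beta^{2}\le 1$, then again the LHS is at most $1$. The substantive case is $\alpha,\beta>0$ with $\alpha^{2}+\beta^{2}>1$, where the required statement is precisely $\gamma \ge \alpha^{2}+\beta^{2}-1$ (which forces $\gamma>0$, so the outer $\max$ on the right drops out).

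For this main case I would use positive semi-definiteness of the $3\times3$ Gram matrix of $u,v,w$, which gives $\det \ge 0$, i.e.\
\[
1 - \alpha^{2} - \beta^{2} - \gamma^{2} + 2\alpha\beta\gamma \;\ge\; 0.
\]
Treating this as a quadratic inequality in $\gamma$ and observing that $(\alpha\beta)^{2}-(\alpha^{2}+\beta^{2}-1) = (1-\alpha^{2})(1-\beta^{2})$ gives the lower bound $\gamma \ge \alpha\beta - \sqrt{(1-\alpha^{2})(1-\beta^{2})}$. It remains to check $\alpha\beta - \sqrt{(1-\alpha^{2})(1-\beta^{2})} \ge \alpha^{2}+\beta^{2}-1$, i.e.\ $1-\alpha^{2}+\alpha\beta-\beta^{2} \ge \sqrt{(1-\alpha^{2})(1-\beta^{2})}$. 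Both sides are non-negative on $[0,1]^{2}$, so squaring is legitimate, and the inequality collapses via the factorization
\[
(1-\alpha^{2}+\alpha\beta-\beta^{2})^{2} - (1-\alpha^{2})(1-\beta^{2}) \;=\; (\alpha-\beta)^{2}(\alpha^{2}+\beta^{2}-1),
\]
whose right-hand side is non-negative exactly in the regime we are in.

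The only real mechanism here is the Gram-matrix PSD condition, which captures the full set of geometric relations between three correlation coefficients; everything else is the correct packaging of cases. The main obstacle I anticipate is spotting the clean factorization in the last display: without it one can still close the argument by direct manipulation of the discriminant, but that identity is what makes the threshold $\alpha^{2}+\beta^{2}-1$ appear exactly on both sides and causes the case analysis to cut along the right line.
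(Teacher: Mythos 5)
Your proof is correct, and it shares the paper's key tool but finishes along a genuinely different line. Both arguments start from positive semidefiniteness of the $3\times 3$ Gram (correlation) matrix of the centered, normalized functions, which yields the determinant inequality $1+2\alpha\beta\gamma\ge\alpha^2+\beta^2+\gamma^2$. The paper, after assuming $\alpha,\beta\ge 0$ and dispatching $\gamma<0$, brings in a \emph{second} inequality, namely Cauchy--Schwarz applied to $f$ and $g+h$ (equivalently PSD of the $2\times 2$ Gram matrix of $u$ and $v+w$), which gives $2+2\gamma\ge(\alpha+\beta)^2\ge 4\alpha\beta$; multiplying by $\gamma/2$ and adding to the determinant inequality then closes the argument by a linear combination. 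You instead stay entirely within the determinant inequality: you solve it as a quadratic constraint in $\gamma$ to extract the lower bound $\gamma\ge\alpha\beta-\sqrt{(1-\alpha^2)(1-\beta^2)}$ (using the identity $(\alpha\beta)^2-(\alpha^2+\beta^2-1)=(1-\alpha^2)(1-\beta^2)$), and then check directly that this lower bound is at least $\alpha^2+\beta^2-1$ via the factorization $(1-\alpha^2+\alpha\beta-\beta^2)^2-(1-\alpha^2)(1-\beta^2)=(\alpha-\beta)^2(\alpha^2+\beta^2-1)$, which is nonnegative precisely in the regime $\alpha^2+\beta^2>1$ where the claim is nontrivial. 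Your version is more self-contained (one PSD input, not two) and gives a sharper intermediate fact about $\gamma$, at the cost of requiring the two factorization identities to fall out; the paper's version is perhaps easier to discover because the final step is a transparent linear combination of two simple inequalities. Both are sound.
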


\begin{proof}
We may without loss generality assume that $\Vars{A}{f}=\Vars{A}{g}=\Vars{A}{h}=1$. In this case, $\Covs{A}{f}{g}=\rho(f,g)$, $\Covs{A}{f}{h}=\rho(f,h)$ and $\Covs{A}{g}{h}=\rho(g,h)$.

If $\rho(f,g)<0$ then the conclusion trivially holds since $\max\{0,\rho(f,h)\}^{2}\leq 1$. Similarly if $\rho(f,h)<0$, the conclusion is also trivial. In the following, we assume that $\rho(f,g)\geq 0$ and $\rho(f,h)\geq 0$. 

Consider the matrix
\[
B\define\begin{bmatrix}
1 & \rho(f,g) & \rho(f,h)\\
\rho(f,g) & 1 & \rho(g,h)\\
\rho(f,h) & \rho(g,h) & 1
\end{bmatrix}.
\]
For each vector $\boldsymbol{\lambda}=(\lambda_{1},\lambda_{2},\lambda_{3})\in\mathbb{R}^{3}$, we know $\boldsymbol{\lambda}^{T}B\boldsymbol{\lambda}=\Vars{A}{\lambda_{1}f+\lambda_{2}g+\lambda_{3}h}\geq 0$. So $B$ is a positive semi-definite matrix. This means $\det B\geq 0$, and we can expand it into
\begin{equation}\label{eq:determinant}
1+2\rho(f,g)\rho(f,h)\rho(g,h)\geq \rho(f,g)^{2}+\rho(f,h)^{2}+\rho(g,h)^{2}.
\end{equation}
If $\rho(g,h)<0$, then \eqref{eq:determinant} implies $1\geq \rho(f,g)^{2}+\rho(f,h)^{2}$ and we arrive at the conclusion. In the following we assume $\rho(g,h)\geq 0$.

Expanding the Cauchy-Schwarz inequality $\Vars{A}{f}\cdot\Vars{A}{g+h}\geq \Covs{A}{f}{g+h}^{2}$, we have
\begin{equation}\label{eq:rank-2-determinant}
2+2\rho(g,h)\geq (\rho(f,g)+\rho(f,h))^{2}\geq 4\rho(f,g)\rho(f,h).
\end{equation}
Multiplying both sides of \eqref{eq:rank-2-determinant} by $\rho(g,h)/2$ and then adding it to \eqref{eq:determinant}, we get the desired conclusion
\[1+\rho(g,h)\geq \rho(f,g)^{2}+\rho(f,h)^{2}.\qedhere\]
\end{proof}

The following definition serves to interpret correlation ratios in terms of $L^{2}$ distances.

\begin{definition}\label{def:tau}
For functions $f,g:A\rightarrow\mathbb{R}$, we define
\[
\tau(f,g)\define \min_{a\in\bR_{\geq 0}, b\in\bR}\left\|f-(ag+b)\right\|_{2},
\]
where the $L^{2}$-norm is the norm in the inner product space $L^{2}(A)$.
\end{definition}
\begin{proposition}\label{prop:tau-to-variance}
Consider two non-constant functions $f,g:A\rightarrow\bR$. We have
\[
\tau(f,g)^{2}=\Big(1-\max\{0,\rho(f,g)\}^{2}\Big)\cdot\Vars{A}{f}.
\]
\end{proposition}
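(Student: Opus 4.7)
The plan is to compute the minimum directly. For fixed $a \in \bR_{\geq 0}$, the inner minimization over $b \in \bR$ is a standard one-dimensional least-squares problem, and the optimum is attained when the residual $f - ag - b$ has zero mean, i.e. $b = \Exu{x \in A}{f(x)} - a \Exu{x \in A}{g(x)}$. Substituting this back reduces the expression $\|f - ag - b\|_2^2$ to
\[
\Vars{A}{f} - 2a \Covs{A}{f}{g} + a^2 \Vars{A}{g},
\]
so that $\tau(f,g)^2 = \min_{a \geq 0} \bigl( \Vars{A}{f} - 2a \Covs{A}{f}{g} + a^2 \Vars{A}{g} \bigr)$.

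Next, I would analyze this one-variable convex quadratic in $a$. Since $g$ is non-constant, $\Vars{A}{g} > 0$, so the unconstrained minimizer over $a \in \bR$ is
\[
a^{*} = \frac{\Covs{A}{f}{g}}{\Vars{A}{g}} = \rho(f,g) \cdot \sqrt{\frac{\Vars{A}{f}}{\Vars{A}{g}}},
\]
and the unconstrained minimum value equals $\bigl(1 - \rho(f,g)^2\bigr)\Vars{A}{f}$.

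The final step is a case split based on the sign of $\rho(f,g)$. If $\rho(f,g) \geq 0$, then $a^{*} \geq 0$ lies in the feasible region $\bR_{\geq 0}$, so $\tau(f,g)^2 = (1 - \rho(f,g)^2)\Vars{A}{f}$, which matches the claim since $\max\{0, \rho(f,g)\} = \rho(f,g)$. If $\rho(f,g) < 0$, then $a^{*} < 0$, so by convexity of the quadratic the constrained minimum over $a \geq 0$ is attained at the boundary $a = 0$, giving value $\Vars{A}{f}$; this matches the claim since $\max\{0, \rho(f,g)\} = 0$ in this case. There is no genuine obstacle: the entire argument is a short exercise in single-variable calculus, and the only subtlety is correctly handling the non-negativity constraint on $a$, which is precisely why the $\max\{0, \cdot\}$ appears on the right-hand side.
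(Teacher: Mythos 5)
Your proof is correct and follows essentially the same approach as the paper: minimize over $b$ to reduce to $\min_{a\geq 0}\Vars{A}{f-ag}$, expand the quadratic in $a$, and split on the sign of $\rho(f,g)$ to handle the constraint $a\geq 0$.
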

\begin{proof}
Note that
\begin{align}
\tau(f,g)^{2}&=\min_{a\in\bR_{\geq 0}, b\in\bR}\left\|f-(ag+b)\right\|_{2}^{2}=\min_{a\in\bR_{\geq 0}}\Vars{A}{f-ag}\nonumber\\
&=\min_{a\in\bR_{\geq 0}}\Big(a^{2}\cdot \Vars{A}{g}-2a\cdot \Covs{A}{f}{g}+\Vars{A}{f}\Big).\label{eq:quadratic-form-of-tau}
\end{align}

If $\rho(f,g)<0$, then $\Covs{A}{f}{g}<0$, and the quadratic polynomial in the right hand side of \eqref{eq:quadratic-form-of-tau} is minimized at $a=0$. Therefore $\tau(f,g)^{2}=\Vars{A}{f}$, as desired.

If $\rho(f,g)\geq 0$, then $\Covs{A}{f}{g}\geq 0$, and the quadratic polynomial in the right hand side of \eqref{eq:quadratic-form-of-tau} is minimized at $a=\Covs{A}{f}{g}/\Vars{A}{g}$. Therefore \eqref{eq:quadratic-form-of-tau} simplifies to
\[
\tau(f,g)^{2}=-\frac{\Covs{A}{f}{g}^{2}}{\Vars{A}{g}}+\Vars{A}{f}=(1-\rho(f,g)^{2})\cdot \Vars{A}{f},
\]
as desired.
\end{proof}

\subsection{Proof of \Cref{thm:FKG-to-spectral-gap}}

We are now ready to prove \Cref{thm:FKG-to-spectral-gap} assuming \Cref{thm:directed}.

\begin{proof}[Proof of \Cref{thm:FKG-to-spectral-gap} assuming \Cref{thm:directed}]
We have
\begin{align}
\cE_{A}(f)&=\mu(A)^{-1}\cdot\cE^{-}(T[f])+\mu(A)^{-1}\cdot\cE^{-}(T[-f])&(\text{by \Cref{prop:Undirected-A-to-directed-full}})\nonumber\\
&\geq \mu(A)^{-1}\cdot\dist_{2}^{\mono}(T[f])^{2}+\mu(A)^{-1}\cdot\dist_{2}^{\mono}(T[-f])^{2}&(\text{by \Cref{thm:directed}})\nonumber\\
&\geq \left\|f-g_{0}\right\|_{2}^{2}+\left\|-f-h_{0}\right\|_{2}^{2}&(\text{by \Cref{prop:dist-to-mono-under-extension}}),\label{eq:bound-by-g-naught}
\end{align}
for some monotone increasing functions $g_{0},h_{0}:A\rightarrow\bR$. If $g_{0}$ is non-constant, we pick $g:A\rightarrow\bR$ to be $g\define g_{0}$. If $g_{0}$ is constant, we pick an arbitrary non-constant increasing function $g:A\rightarrow\bR$. In either case, we trivially have
\[
\left\|f-g_{0}\right\|_{2}^{2}\geq \min_{a\in\bR_{\geq 0},b\in\bR}\left\|f-(ag+b)\right\|_{2}^{2}=\tau(f,g)^{2}.
\]
Similarly we pick a non-constant increasing function $h:A\rightarrow\bR$ such that $\left\|-f-h_{0}\right\|_{2}^{2}\geq \tau(-f,h)^{2}$. We can then continue from \eqref{eq:bound-by-g-naught} and have
\begin{align*}
&\quad\cE_{A}(f)\geq \tau(f,g)^{2}+\tau(-f,h)^{2}\\
&= \Big(1-\max\{0,\rho(f,g)\}^{2}\Big)\cdot\Vars{A}{f}+\Big(1-\max\{0,\rho(-f,h)\}^{2}\Big)\cdot\Vars{A}{f} &(\text{by \Cref{prop:tau-to-variance}})\\
&=\Big(2-\max\{0,\rho(f,g)\}^{2}-\max\{0,\rho(f,-h)\}^{2}\Big)\cdot\Vars{A}{f}\\
&\geq \Big(1-\max\{0,\rho(g,-h)\}\Big)\cdot\Vars{A}{f}&(\text{by \Cref{prop:trigonometry}})\\
&=\Big(1+\min\{0,\rho(g,h)\}\Big)\cdot\Vars{A}{f}\geq (1+\delta(A))\cdot\Vars{A}{f} &(\text{by \Cref{def:FKG-ratio}}).&\qedhere
\end{align*}
\end{proof}

\section{Spectral theory and heat flow for directed graphs}\label{sec:spectral-directed}

In this section, as a first step toward proving our directed Poincaré inequality for the hypercube
(\cref{thm:directed}), we first set up a framework that applies to the more general case of directed
weighted graphs. Specifically, we revisit and extend the study of directed analogues of classical
concepts from spectral graph theory such as the Laplacian operator, the Dirichlet energy, and the
heat flow; define a directed notion of spectral gap for weighed directed graphs; and show that
bounding this \emph{dynamical spectral gap} suffices for proving a directed Poincaré inequality.
Then, in the next section, \cref{thm:directed} will follow as an application once we establish a
bound on the directed spectral graph of the directed hypercube graph.

\paragraph*{Prior work on spectral theory for directed graphs.} There has been extensive prior work
developing spectral graph theory beyond the classical setting of undirected graphs, toward capturing
directed graphs and hypergraphs. Early work of \cite{Fil91,Chu05} associated a certain Hermitian
matrix with each directed graph, and showed a Cheeger-type inequality based on the eigenvalues of
that matrix, and many subsequent works have built upon that foundation; we refer to the recent
thesis \cite{Tun25} for a thorough review, and here we mention two recent lines of work that are
closest to our setting. One line of works \cite{LTW23,LTW24,Tun25} has developed a theory of
\emph{reweighted eigenvalues} capturing expansion properties of directed graphs and hypergraphs,
proved Cheeger inequalities for these settings, and devised efficient algorithms for graph
partitioning. Another line of works \cite{Yos16,Yos19,FSY21,IMTY22} has pursued similar goals by
analyzing a \emph{nonlinear Laplacian} operator and the heat equation associated with it.

While our interest in a spectral theory for directed graphs is related to these previous works (and
indeed we will build upon the approach of \cite{Yos16}), our focus is slightly different. In a
nutshell, while prior works have focused on spectral characterizations of good expansion of a
directed graph $G$ as captured by directed versions of Cheeger inequalities (for edge conductance or
vertex expansion) and mixing time of random walks, our focus will be on the quality of $G$ as the
``substrate'' for a dynamical process; we will consider $G$ a good directed spectral expander if it
affords fast convergence for that process. In particular, our perspective allows for \emph{directed
acyclic graphs} to be considered good expanders, which is a stark departure from prior perspectives
-- as we briefly explain next.

Indeed, a central focus of prior works has been to establish Cheeger inequalities of the type
\begin{equation}
    \label{eq:directed-cheeger}
    \vec{\lambda}_2 \lesssim \vec{\phi}(G) \lesssim \sqrt{\vec{\lambda}_2} \,,
\end{equation}
where $\vec{\lambda}_2$ denotes a relevant second eigenvalue related to the directed weighted graph
$G$, and the edge conductance $\vec{\phi}(G)$ of $G$ is
\[
    \vec{\phi}(G) \define \min_{\emptyset \ne S \subsetneq V}
    \frac{\min\left\{ w(\delta^+(S)), w(\delta^+(V \setminus S)) \right\}}{
    \min\left\{ \vol_w(S), \vol_w(V \setminus S) \right\}} \,,
\]
where $\delta^+(S)$ denotes the outgoing edge boundary of $S$ and $\vol_w(S)$ denotes the total
weighted degree of all vertices in $S$. Now, if $G$ is not strongly connected, then in general there
exists a set $S$ with positive volume but no outgoing edges, which makes $\vec{\phi}(G)$ and thus
$\vec{\lambda}_2$ zero. In particular, this is the case for the directed hypercube graph which we
are interested in, so if we hope to show a non-trivial directed Poincaré inequality via a spectral
argument, such a quantity $\vec{\lambda}_2$ will not do.

\paragraph*{Our approach.}
The type of spectral theory for directed graphs we study in this section was first developed by
\cite{Yos16} in the context of network analysis. In that work, \cite{Yos16} defined a nonlinear
Laplacian operator acting on real-valued functions defined on the vertices of a directed graph,
showed that this operator induces a dynamical process that is a directed analogue of the classical
heat flow on graphs, proved that this operator has nontrivial eigenvalues, and established a Cheeger
inequality like \eqref{eq:directed-cheeger} for this setting. While \cite{Yos16} focused on the
implications of directed spectral theory for graph partitioning and related problems in network
analysis, we focus on the dynamical properties of the heat flow on directed graphs -- namely its
convergence to a \emph{monotone} limit, and its connection to the directed Poincaré inequality.

Let us briefly motivate and preview the main ideas in our argument. In classical spectral graph
theory, given a graph $G = (V, E)$, the following four concepts play a central role:
\begin{enumerate}
    \item The Laplacian operator $\lap$, which acts on a function $f$ by outputting another function
        $\lap f$.
    \item The Dirichlet energy functional $\dir$, which associates with each $f$ an energy $\cE(f)
        \ge 0$ measuring the ``local variance'' of $f$ along edges of $G$.
    \item The heat flow semigroup $S_t$, which captures a dynamical process which starts at some
        initial state $f$ and has its rate of change governed by the Laplacian: $\odv*{S_t f}{t} =
        \lap S_t f$. The heat flow informally ``sends mass'' along each edge of $G$ from the vertex
        with higher $f$-value to the vertex with lower $f$-value, causing the system to converge to
        an equilibrium state.
    \item The Poincaré inequality, which states that $\Var{f} \le \frac{1}{\lambda} \, \cE(f)$. The
        best constant $\lambda$ is called the \emph{spectral gap} of $G$.
\end{enumerate}

The appearance of the Poincaré inequality above hints at the relevance of this theory to our goal of
proving a directed Poincaré inequality, and we mentioned above that our strategy toward this goal
will be to define a directed version of the spectral gap. As a motivation for this strategy, we
recall that the spectral gap ties together essentially all of the elements of the list above;
indeed, as summarized in \cite[Theorem~2.18]{Han14}, the following are equivalent given a constant
$c \ge 0$:
\begin{enumerate}
    \item \label{item:poincare-inequality}
        Poincaré inequality: $\Var{f} \le c \dir(f)$ for all $f$.
    \item \label{item:variance-decay}
        Variance decay: $\Var{S_t f} \le e^{-2t/c} \Var{f}$ for all $f, t$.
    \item \label{item:energy-decay}
        Energy decay: $\dir(S_t f) \le e^{-2t/c} \dir(f)$ for all $f, t$.
\end{enumerate}

To prove a directed Poincaré inequality, we replace the Laplacian operator $\lap$ with an operator
$\lap^-$ which, intuitively, only ``sends mass'' from vertex $u$ to vertex $v$ if $(u,v)$ is a
directed edge and $f(u) > f(v)$, \ie $f$ violates monotonicity along edge $(u,v)$; the dynamical
system induced by $\lap^-$ is the \emph{directed heat flow} on $G$, which was studied by
\cite{Yos16}. The directed heat flow is precisely the \emph{gradient system} for the directed energy
functional $\dir^-(f)$, \ie the upward boundary from \cref{def:upward-boundary}. Thus, this system
intuitively ``corrects'' the local violations of monotonicity as quickly as possible, and indeed it
converges to a monotone function as $t \to \infty$.

This directed theory cannot fully analogize the classical situation above; for example, variance
decay fails to hold, because non-constant monotone functions are (non-unique!) stationary solutions.
Instead, we will \emph{define} the \emph{dynamical spectral gap} of $G$ as the best constant
characterizing the (directed) energy decay, as in \cref{item:energy-decay} above, and then show that
(the directed version of) \cref{item:energy-decay} implies (the directed version of)
\cref{item:poincare-inequality}.

As mentioned in the introduction, recent work of \cite{pinto2024directed} also proved a directed
Poincaré inequality -- for functions defined on the continuous cube $[0,1]^n$ -- using a dynamical
argument. Indeed, that work also took as its starting point the connections between the heat flow
and the Poincaré inequality, and showed that the natural directed version of the heat flow in
continuous space enjoys exponential energy decay, which implies a directed Poincaré inequality for
that setting. Our proof is conceptually similar to the proof of \cite{pinto2024directed}, but our techniques differ
in at least two ways: 1)~\cite{pinto2024directed} required analytical arguments
from the theory of partial differential equations (PDEs), while we are able to study our dynamical
process as an ordinary differential equation (ODE) thanks to the finite-dimensional nature of our
problem; and 2)~\cite{pinto2024directed} used tools from optimal transport theory to tensorize their
one-dimensional result, while we obtain a multidimensional inequality directly
by studying the directed heat flow as a gradient system. In this last
regard, our proof also bears resemblance to, and is inspired by prior work of \cite{KLLR18} on the
so-called \emph{Paulsen problem} from operator theory, where a ``movement decay'' property of a
suitable dynamical system was used to bound the distance between the initial and equilibrium states
of that system.

\paragraph*{Organization.}
The rest of this section is organized as follows.
\cref{subsection:laplacian,subsec:energy-functional,subsection:heat-flow} present the directed
versions of the Laplacian operator, the Dirichlet energy functional, and the heat flow,
respectively. These subsections are largely an alternative exposition of the ideas covered in
\cite{Yos16}, but with a different emphasis tailored to our goals\footnote{In particular,
\cite{Yos16} defined both normalized and unnormalized versions of their Laplacian operator, and
focused on the normalized one. We study a single definition for weighted graphs.}\!\!. Then, in
\cref{subsection:spectral-gap} we define the dynamical spectral gap of a directed weighted graph,
and show via an energy decay argument that every directed weighted graph admits a directed Poincaré
inequality mediated by the dynamical spectral gap.

\paragraph{Notation.} Given a discrete set and an element $u \in V$, we write $e_u \in L^2(V)$ for the standard basis vector given by $e_u(v) \define
\ind{u=v}$.

\subsection{The directed Laplacian}
\label{subsection:laplacian}

Let $G = (V, w)$ be a directed weighted graph, where $w : V \times V \to [0, +\infty)$ is function
specifying the weights of edges in $G$. By convention, we say that $(u, v) \in V \times V$ is an
edge in $G$ when $w(u, v) > 0$. We say $G$ is \emph{undirected} is $w$ is a symmetric function.

\begin{definition}[Directed Laplacian \cite{Yos16}]
    \label{def:laplacian-general}
    The \emph{directed Laplacian operator} of $G$ is the operator $\lap^- = \lap^-_{G} : L^2(V) \to
    L^2(V)$ given by
    \begin{equation}
        \lap^-f \define \frac{1}{2} \sum_{u,v \in V} w(u,v) \left(f(u) - f(v)\right)^+ (e_v - e_u)
    \end{equation}
    for each $f \in L^2(V)$. In this paper, we use the notation $x^{+}:=\max\{x,0\}$, for $x\in \mathbb{R}$.
\end{definition}

Given $f \in L^2(V)$, we say an edge $(u, v)$ is \emph{$f$-monotone} if $f(u) \le f(v)$, and we say
it is \emph{$f$-antimonotone} if $f(u) > f(v)$. We say \emph{$f$ is monotone} if every edge $(u,v)$
of $G$ is $f$-monotone. If we think of $f$ as the distribution of ``mass'' over the vertices $V$ and of
$\lap^- \bm{f}$ as the rate of change of $\bm{f} = \bm{f}(t)$ over time $t$, then
\cref{def:laplacian-general} posits that mass flows along the $f$-antimonotone edges, from the
heavier vertex to the lighter one. When $G$ is undirected, this process is the standard heat flow on
$G$, and indeed \cref{def:laplacian-general} recovers the standard graph Laplacian in this case:

\begin{observation}
    \label{obs:standard-laplacian}
    If $G$ is undirected, then $\lap^-_{G}$ is (half of) the standard (unnormalized)
    Laplacian operator $\lap_{G}$ of $G$. Indeed, we can see the action of $\lap^-_{G}$ on $f \in L^2(V)$
    as follows: 1)~remove the $f$-monotone edges $(u,v)$ from $G$; 2)~view the resulting graph $G'$ as undirected; and 3)~apply the standard Laplacian operator $\lap_{G'}$ to $f$.
\end{observation}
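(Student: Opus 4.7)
The proof is a direct calculation from \Cref{def:laplacian-general} that exploits the symmetry of $w$ to pair the contributions of ordered pairs $(u,v)$ and $(v,u)$. The key elementary identity is $(a)^+ - (-a)^+ = a$ for every $a \in \bR$, where $(x)^+ \define \max\{0,x\}$.

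First I would group the defining sum by unordered pairs. For each unordered pair $\{u,v\}$ with $u \ne v$, the two ordered pairs $(u,v)$ and $(v,u)$ together contribute
\begin{align*}
& w(u,v)(f(u)-f(v))^+(e_v - e_u) + w(v,u)(f(v)-f(u))^+(e_u-e_v) \\
&\quad = w(u,v)\Big[(f(u)-f(v))^+ - (f(v)-f(u))^+\Big](e_v-e_u) \\
&\quad = w(u,v)(f(u)-f(v))(e_v-e_u),
\end{align*}
using $w(u,v)=w(v,u)$ in the first equality and the identity above in the second. Summing over unordered pairs and absorbing the overall $\tfrac{1}{2}$ from \Cref{def:laplacian-general} yields
\[
\lap^{-}_{G} f \,=\, \tfrac{1}{2}\sum_{\{u,v\}} w(u,v)\big(f(u)-f(v)\big)(e_v-e_u),
\]
which is, up to the stated factor of $\tfrac{1}{2}$, precisely the standard unnormalized Laplacian $\lap_{G} f$ of the undirected weighted graph $G$, proving the first half of the observation.

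For the three-step alternative description, I would then note that every term in \Cref{def:laplacian-general} associated with an $f$-monotone ordered pair $(u,v)$ (\ie with $f(u) \le f(v)$) already vanishes since $(f(u)-f(v))^+ = 0$. Deleting the $f$-monotone edges from $G$ therefore leaves a directed graph $G'$ with $\lap^{-}_{G'} f = \lap^{-}_{G} f$. In $G'$ each surviving unordered pair carries exactly one directed edge; ``viewing'' it as undirected produces a symmetric weighted graph on which the first part of the proof applies, yielding $\lap^{-}_{G'} f = \tfrac{1}{2}\lap_{G'} f$ and establishing the claim.

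There is no real obstacle; the only mild bookkeeping concern is tracking the $\tfrac{1}{2}$ in \Cref{def:laplacian-general} and the factor of $2$ incurred when switching between sums over ordered and unordered pairs.
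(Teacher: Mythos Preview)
The paper states this as an observation without proof, so there is no argument to compare against. Your direct calculation is correct: pairing the ordered contributions of $(u,v)$ and $(v,u)$ via the identity $(a)^+ - (-a)^+ = a$ cleanly yields $\lap^-_G f = \tfrac{1}{2}\lap_G f$ when $w$ is symmetric, and the three-step description follows since $f$-monotone edges contribute nothing to the defining sum.

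One small point you glossed over: when you pass from the directed $G'$ to its symmetrized version, you should note that $\lap^-$ is unchanged by this operation, because every edge added in the symmetrization is the reverse of a surviving $f$-antimonotone edge and is therefore itself $f$-monotone (hence contributes zero). Also, regarding the bookkeeping on the $\tfrac{1}{2}$: the paper's later use of this observation (in the proof that $\lambda^-(G)>0$) adopts the convention $w_\pi(u,v)=\tfrac{1}{2}w(u,v)$ when ``viewing as undirected,'' so that the three-step process yields $\lap_{G'} f$ on the nose rather than $\tfrac{1}{2}\lap_{G'} f$. Your version with the explicit $\tfrac{1}{2}$ is equivalent and arguably clearer.
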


\begin{remark}
    In spectral graph theory, one typically defines
    the Laplacian operator of an undirected graph as $-\lap_{G}$ in our notation, \ie by replacing
    $e_v - e_u$ with $e_u - e_v$ in \cref{def:laplacian-general}. Our notation follows instead the
    tradition from probability theory (see \eg \cite{BGL14,Han14}), which has the advantages 1)~that
    $\lap^-$ itself, rather than $-\lap^-$, will be the generator of the heat semigroup -- our main
    object of interest; and 2)~of consistency with the analytic setting, where we have the Laplacian
    operator $\Delta$ for smooth functions in Euclidean space.
\end{remark}

We note that unlike the standard Laplacian operator, the operator $\lap^-$ is nonlinear and not self-adjoint in general. Instead, one may think of $\lap^{-}$ as a ``piecewise linear'' operator on $L^{2}(V)$, which is in particular Lipschitz continuous.

\begin{lemma}
    \label{lemma:laplacian-lipschitz}
    The operator $\lap^- : L^2(V) \to L^2(V)$ is Lipschitz continuous.
\end{lemma}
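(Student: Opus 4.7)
The plan is to exploit the fact that the scalar truncation $x \mapsto x^+ = \max\{0,x\}$ is $1$-Lipschitz on $\bR$, and observe that $\lap^-$ is assembled from $(\cdot)^+$ via a finite sum of bounded linear evaluations on $L^2(V)$. Since $V$ is finite and the edge weights $w(u,v)$ are fixed real numbers, this will yield a Lipschitz bound with a constant depending on $|V|$ and on $w$, which is all that is needed.

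Concretely, given $f, g \in L^2(V)$ I would begin by subtracting $\lap^- g$ from $\lap^- f$ to obtain
$$\lap^- f - \lap^- g = \frac{1}{2} \sum_{u,v \in V} w(u,v)\bigl[(f(u)-f(v))^+ - (g(u)-g(v))^+\bigr](e_v - e_u),$$
and then apply the elementary pointwise inequality $|x^+ - y^+| \le |x-y|$ termwise to get
$$\bigl|(f(u)-f(v))^+ - (g(u)-g(v))^+\bigr| \le |h(u) - h(v)|, \quad \text{where } h \define f-g.$$
The triangle inequality in $L^2(V)$ together with the identity $\|e_v - e_u\| = \sqrt{2/|V|}$ for $u\ne v$ would then leave us with
$$\|\lap^- f - \lap^- g\| \le \frac{1}{\sqrt{2|V|}} \sum_{u,v \in V} w(u,v)\, |h(u) - h(v)|,$$
and bounding each $|h(u)|$ by $\sqrt{|V|}\,\|h\|$ (which follows from $\|h\|^2 = |V|^{-1} \sum_x h(x)^2$) would give the desired Lipschitz estimate $\|\lap^- f - \lap^- g\| \le C_G\,\|f-g\|$ for a constant $C_G$ depending only on $G$.

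I do not anticipate any real obstacle: the nonlinearity in $\lap^-$ is entirely concentrated in the $1$-Lipschitz truncation $(\cdot)^+$, and everything surrounding it is finite, linear, and bounded. A slightly more conceptual phrasing of the same argument is to note that $\lap^-$ is \emph{piecewise linear} on $L^2(V)$, with finitely many linear pieces (one for each sign pattern of the family $\{f(u)-f(v)\}_{u,v}$) that agree continuously along their common boundaries; the Lipschitz constant is then the maximum operator norm over these pieces. The constant we obtain by either route is far from tight, but since Lipschitz continuity is invoked qualitatively (to apply standard existence/uniqueness results for the ODE governing the directed heat flow in \cref{subsection:heat-flow}), no sharpness is required.
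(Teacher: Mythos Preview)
Your proposal is correct and takes essentially the same approach as the paper: both arguments boil down to the $1$-Lipschitz property of $x \mapsto x^+$ combined with the triangle inequality over the finite weighted sum defining $\lap^-$. The only cosmetic difference is that the paper first reduces (via equivalence of norms) to the case where $f$ and $g$ differ in a single coordinate and then bounds $|(\lap^- f)(u) - (\lap^- g)(u)|$ pointwise, whereas you work directly with the $L^2(V)$ norm throughout; your route is arguably slightly more streamlined.
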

\begin{proof}
    Since all norms are equivalent in a finite-dimensional space, it suffices to let $f, g \in
    L^2(V)$ differ on a single point $z \in V$ and show that, for all $u \in V$, $\abs*{(\lap^-
    f)(u) - (\lap^- g)(u)} \le M\abs*{f(z) - g(z)}$ for some constant $M > 0$, which may depend on
    the graph $G$ but not on $f$ or $g$.

    By \cref{def:laplacian-general}, we have
    \[
        (\lap^- f)(u)
        = \frac{1}{2} \sum_{v \in V} \left[
            -w(u, v) \left(f(u)-f(v)\right)^+ + w(v, u) \left(f(v)-f(u)\right)^+
        \right]
    \]
    and
    \[
        (\lap^- g)(u)
        = \frac{1}{2} \sum_{v \in V} \left[
            -w(u, v) \left(g(u)-g(v)\right)^+ + w(v, u) \left(g(v)-g(u)\right)^+
        \right] \,.
    \]
    By the triangle inequality and the assumption that $f$ and $g$ differ only on coordinate $z$,
    \begin{align*}
        \abs*{(\lap^- f)(u) - (\lap^- g)(u)}
        &\le \frac{1}{2} \sum_{v,v' \in V : z \in \{v,v'\}} w(v,v')
            \abs*{\left(f(v)-f(v')\right)^+ - \left(g(v)-g(v')\right)^+} \\
        &\le \frac{1}{2} \sum_{v,v' \in V : z \in \{v,v'\}} w(v,v')
            \abs*{\left(f(v)-f(v')\right) - \left(g(v)-g(v')\right)} \\
        &\le \frac{1}{2} \abs*{f(z)-g(z)} \sum_{v,v' \in V} w(v,v') \,. \qedhere
    \end{align*}
\end{proof}

\subsection{The energy functional}\label{subsec:energy-functional}

As in the case of the standard Laplacian operator, the directed Laplacian naturally induces an
\emph{energy functional} (or Dirichlet form). The following definition corresponds to the Rayleigh
quotient defined in \cite{Yos16}.

\begin{definition}[Energy functional]
    The \emph{directed Dirichlet energy} functional $\dir^- : L^2(V) \to \bR$ is given by
    \[
        \dir^-(f) \define -\inp{f}{\lap^- f} \,.
    \]
\end{definition}

The directed Dirichlet energy measures the local violations of monotonicity along edges of $G$, and
it is indeed always non-negative, as shown in the following proposition (which is similar to
Lemma~4.3 of \cite{Yos16} for the normalized nonlinear Laplacian).

\begin{proposition}[Energy functional measures local violations]
    \label{prop:energy-violations}
    For each $f \in L^2(V)$, it holds that
    \[
        \dir^-(f) = \frac{1}{2} \Exu{u \in V}{
            \sum_{v \in V} w(u,v) \left(\left(f(u)-f(v)\right)^+\right)^2
        } \,.
    \]
\end{proposition}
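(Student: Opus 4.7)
The proof is a direct computation expanding the definition of $\dir^-$. The plan is to unfold $-\inp{f}{\lap^- f}$ using the definition of $\lap^-$ from \cref{def:laplacian-general}, apply bilinearity of the inner product on $L^2(V)$, and then simplify using a simple pointwise identity for the positive part.

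First I would write, by definition,
\[
\dir^-(f) = -\inp{f}{\lap^- f} = -\frac{1}{2} \sum_{u,v \in V} w(u,v) \left(f(u)-f(v)\right)^+ \inp{f}{e_v - e_u},
\]
pulling the (finite) sum out of the inner product. Next I would compute the individual inner products using the definition $\inp{f}{g} = \Exu{x \in V}{f(x) g(x)}$ with the uniform distribution on $V$, giving
\[
\inp{f}{e_v - e_u} = \frac{1}{|V|}\bigl(f(v) - f(u)\bigr).
\]
Substituting this back yields
\[
\dir^-(f) = \frac{1}{2|V|} \sum_{u,v \in V} w(u,v)\,\bigl(f(u)-f(v)\bigr)^+ \bigl(f(u) - f(v)\bigr).
\]

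The key (and only nontrivial) step is the pointwise identity $a^+ \cdot a = (a^+)^2$ for every $a \in \bR$, which I would verify by splitting into the cases $a \geq 0$ (where $a^+ = a$) and $a < 0$ (where $a^+ = 0$, making both sides zero). Applying this with $a = f(u)-f(v)$ yields
\[
\dir^-(f) = \frac{1}{2|V|} \sum_{u,v \in V} w(u,v)\,\bigl(\bigl(f(u)-f(v)\bigr)^+\bigr)^2 = \frac{1}{2}\Exu{u \in V}{\sum_{v \in V} w(u,v)\,\bigl(\bigl(f(u)-f(v)\bigr)^+\bigr)^2},
\]
which is exactly the claimed formula.

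There is no real obstacle here; this is a routine unfolding of definitions whose only subtle point is the sign-bookkeeping that turns the expression $(f(u)-f(v))^+(f(v)-f(u))$ (coming from $e_v - e_u$) into $-((f(u)-f(v))^+)^2$, so that the overall minus sign in $\dir^-(f) = -\inp{f}{\lap^- f}$ produces a manifestly nonnegative quantity. In particular, this computation simultaneously establishes that $\dir^-(f) \geq 0$, justifying the name ``energy functional.''
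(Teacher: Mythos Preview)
Your proof is correct and follows essentially the same approach as the paper: both unfold the definition $\dir^-(f) = -\inp{f}{\lap^- f}$ and simplify. Your version is slightly more streamlined, since you keep the edge-indexed sum and apply the identity $a^+ \cdot a = (a^+)^2$ directly, whereas the paper first evaluates $(\lap^- f)(u)$ pointwise, obtains two sums, and then recombines them via a variable swap to reach the same conclusion.
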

\begin{proof}
    We have
    \begin{align*}
        \dir^-(f)
        &= -\inp{f}{\lap^- f}
        = -\inp{f}{
                \frac{1}{2} \sum_{u',v \in V} w(u',v) \left(f(u')-f(v)\right)^+ (e_v - e_{u'})
        } \\
        &= -\frac{1}{2} \Exu{u \in V}{ f(u) \left(
                \sum_{u' \in V} w(u',u) \left(f(u')-f(u)\right)^+
                - \sum_{v \in V} w(u,v) \left(f(u)-f(v)\right)^+
        \right)} \\
        &= -\frac{1}{2} \Exu{u \in V}{ f(u) \sum_{v \in V} \Big(
            w(v,u) \left(f(v)-f(u)\right)^+ - w(u,v) \left(f(u)-f(v)\right)^+ \Big)} \\
        &= \frac{1}{2|V|} \sum_{u,v \in V} w(u,v) f(u) \left(f(u)-f(v)\right)^+
            - \frac{1}{2|V|} \sum_{u,v \in V} w(v,u) f(u) \left(f(v)-f(u)\right)^+ \\
        &= \frac{1}{2} \Exu{u \in V}{\sum_{v \in V} w(u,v) \left(\left(f(u)-f(v)\right)^+\right)^2}
        \,. \qedhere
    \end{align*}
\end{proof}

Observe that since $\lap^{-}$ is ``piecewise linear'', the energy functional $\dir^{-}$ is a ``piecewise quadratic'' functional on $L^{2}(V)$. Furthermore, at each point $f\in L^{2}(V)$ the Laplacian $\lap^{-}f$ points at the direction opposite to the gradient of $\dir^{-}$. To formalize this viewpoint, we first need to introduce some standard definitions to clarify what we mean by the gradient of a functional on 
$L^2(V)$, such as $\dir^-$. We recall the following standard definition.

\begin{definition}[Fréchet derivative and gradient]
    We say that $F : L^2(V) \to \bR$ is \emph{Fréchet differentiable} at $f \in L^2(V)$ if there
    exists a bounded linear operator $A : L^2(V) \to \bR$ such that
    \[
        \lim_{\|h\|_{2} \to 0} \frac{\abs*{F(f+h) - F(f) - Ah}}{\|h\|_{2}} = 0 \,.
    \]
    In this case, we call $DF(f) \define A$ the \emph{Fréchet derivative} of $F$ at $f$. Moreover,
    by the Riesz representation theorem there exists a unique vector $v \in L^2(V)$ satisfying
    \begin{equation}
        \label{eq:v-grad}
        Ah = \inp{h}{v}
    \end{equation}
    for all $h \in L^2(V)$. We call $\grad F(f) \define v$ the \emph{gradient} of $F$ at $f$.
    Finally, we say that $F$ is (Fréchet) $C^1$ if it is Fréchet differentiable at every $f \in
    L^2(V)$ and the map $f \mapsto DF(f)$ (from $L^2(V)$ to the space of bounded linear operators
    $L^2(V) \to \bR$) is continuous.
\end{definition}
The following proposition follows from standard real analysis.
\begin{proposition}
    \label{lemma:frechet}
    Suppose $F : L^2(V) \to \bR$ has continuous partial derivatives with respect to the standard
    basis $\{e_u\}_{u \in V}$, that is, for each $u \in V$ the $L^2(V) \to \bR$ function 
    \[
        \partial_u F(f) = \lim_{t \to 0} \frac{F(f + t e_u) - F(f)}{t}
    \]
    is continuous. Then $F$ is (Fréchet) $C^1$ and its gradient $\grad F : L^2(V) \to L^2(V)$ is
    given by
    \[
        (\grad F(f))(u) = |V| \cdot \partial_u F(f)
    \]
    for each $f \in L^2(V)$ and $u \in V$.
\end{proposition}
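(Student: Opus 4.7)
The plan is to reduce this to the classical finite-dimensional fact that continuous partial derivatives imply $C^1$-differentiability, and then carefully track the normalization $\frac{1}{|V|}$ built into our inner product on $L^2(V)$.

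First I would identify $L^2(V)$ with $\bR^{|V|}$ as a vector space via the coordinate map $f \mapsto (f(u))_{u \in V}$; this is a linear isomorphism and in particular preserves the topology (all norms on a finite-dimensional space are equivalent). In these coordinates, the hypothesis of the proposition is exactly that $F$, viewed as a function $\bR^{|V|} \to \bR$, has continuous partial derivatives $\partial_u F$ in the usual sense. By the standard theorem from multivariable calculus (see e.g.\ any text covering the relation between $C^1$ and Fréchet differentiability), this implies that $F$ is Fréchet differentiable at every $f$ and that its Fréchet derivative is the linear functional
\[
    DF(f)(h) \;=\; \sum_{u \in V} \partial_u F(f)\, h(u),
\]
and moreover the map $f \mapsto DF(f)$ is continuous, so $F$ is Fréchet $C^1$.

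Next I would compute the Riesz representative of $DF(f)$ with respect to the inner product $\inp{h}{v} = \frac{1}{|V|} \sum_{u \in V} h(u) v(u)$ that we have placed on $L^2(V)$. Solving $\sum_u \partial_u F(f)\, h(u) = \frac{1}{|V|} \sum_u v(u)\, h(u)$ for all $h \in L^2(V)$ forces $v(u) = |V|\cdot \partial_u F(f)$, which is exactly the claim $(\grad F(f))(u) = |V|\cdot \partial_u F(f)$. There is no real obstacle here: the only thing to be careful about is that the averaging convention $\inp{f}{g} = \Exu{u\in V}{f(u)g(u)}$ introduces the factor $|V|$ in the gradient relative to the naïve coordinate gradient $(\partial_u F(f))_u$; otherwise the proposition is an immediate translation of the standard finite-dimensional $C^1$ criterion to the Hilbert-space language used in the rest of the section.
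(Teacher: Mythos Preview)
Your proposal is correct and is essentially the same argument as the paper's: the paper merely asserts that the proposition ``follows from standard real analysis,'' and the intended argument (identify $L^2(V)$ with $\bR^{|V|}$, invoke the classical theorem that continuous partials imply Fr\'echet $C^1$, then read off the Riesz representative with the $|V|$ factor coming from the normalized inner product) is exactly what you wrote.
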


We highlight that since the inner product on $L^{2}(V)$ used in \eqref{eq:v-grad} differs from the standard inner product on the finite dimensional Euclidean space $\bR^{V}$ by a factor of $|V|$, the gradient of $F$ also differs from the partial derivatives of $F$ by a factor of $|V|$. 

We are now equipped to show that $\dir^-$ is a convex $C^1$ functional, and that the Laplacian
$\lap^{-}$ is (half of) the negative gradient of $\dir^{-}$.

\begin{lemma}
    \label{lemma:energy-c1}
    The functional $\dir^- : L^2(V) \to [0, +\infty)$ is $C^1$ and convex.
\end{lemma}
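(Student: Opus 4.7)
The plan is to start from the explicit formula $\dir^-(f) = \tfrac{1}{2}\Exu{u\in V}{\sum_{v\in V} w(u,v)((f(u)-f(v))^+)^2}$ given by \Cref{prop:energy-violations}, which writes $\dir^-$ as a non-negative linear combination of composed terms $\phi(f(u)-f(v))$, where the scalar function $\phi(t)\define (\max\{t,0\})^2$ is both convex and $C^1$ on $\bR$. Indeed, $\phi$ vanishes on $(-\infty,0]$ and equals $t^2$ on $[0,\infty)$, with continuous monotone derivative $\phi'(t)=2\max\{t,0\}$, so the two pieces glue together with matching value and first derivative at $t=0$. Non-negativity of $\dir^-$ is immediate from the same formula.

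For convexity, note that for each $(u,v)\in V\times V$ the evaluation map $f\mapsto f(u)-f(v)$ is a bounded linear functional on $L^2(V)$, so $f\mapsto \phi(f(u)-f(v))$ is convex as the composition of a convex function with a linear map. Taking a non-negative linear combination over $(u,v)\in V\times V$ with weights $\tfrac{w(u,v)}{2|V|}$ preserves convexity, hence $\dir^-$ is convex.

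For the $C^1$ property, I would invoke \Cref{lemma:frechet}: it suffices to show that for every $u\in V$, the partial derivative $\partial_u \dir^-(f)$ depends continuously on $f$. Using $\phi'(t)=2(t)^+$ and differentiating the formula from \Cref{prop:energy-violations} term by term yields
\[
\partial_u \dir^-(f) \;=\; \frac{1}{|V|}\sum_{v\in V}\Big[w(u,v)\big(f(u)-f(v)\big)^+ - w(v,u)\big(f(v)-f(u)\big)^+\Big],
\]
which is continuous in $f$ because $(\cdot)^+:\bR\to\bR$ is continuous. Applying \Cref{lemma:frechet} then shows that $\dir^-$ is Fréchet $C^1$.

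No substantial obstacle is expected here: the only potential source of non-smoothness of $\phi$ is at $t=0$, and the matching of $\phi'$ from both sides is precisely what renders $\phi$ (and hence $\dir^-$) of class $C^1$ while still failing to be $C^2$. The only minor bookkeeping point worth flagging is the factor of $|V|$ in \Cref{lemma:frechet} relating the $L^2(V)$-gradient to the vector of ordinary partials; this convention will matter later when $\grad\dir^-(f)$ is compared to $\lap^- f$, but it does not affect the present statement.
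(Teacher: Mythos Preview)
Your proposal is correct and follows essentially the same approach as the paper: both use \Cref{prop:energy-violations} to write $\dir^-$ as a non-negative combination of terms of the form $((f(u)-f(v))^+)^2$, appeal to \Cref{lemma:frechet} for the $C^1$ claim via continuous partial derivatives, and reduce convexity to that of the one-dimensional map $t\mapsto (t^+)^2$. The paper packages this via the two-variable function $g(x,y)=\tfrac12((x-y)^+)^2$ and verifies its convexity by an explicit computation, whereas you use the equivalent one-variable $\phi(t)=(t^+)^2$ and invoke ``convex composed with linear is convex''; these are the same argument up to notation.
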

\begin{proof}
We define a function $g
: \bR^2 \to \bR$ by
\begin{equation}\label{eq:def-of-g}
    g(x, y) \define \frac{1}{2} \left( (x-y)^+ \right)^2 \,.
\end{equation}

    \textbf{Class $C^1$.} By \cref{prop:energy-violations}, it suffices to show that for each $u, v
    \in V$, the $L^2(V) \to \bR$ function $f \mapsto \left(\left(f(u)-f(v)\right)^+\right)^2$ is
    $C^1$. By \cref{lemma:frechet}, it suffices to show that this function has continuous partial
    derivatives, which is equivalent to showing that $g$ has continuous partial derivatives. This is
    indeed the case, with $\partial_1 g(x,y) = (x-y)^+$ and $\partial_2 g(x,y) = -(x-y)^+$.

    \textbf{Convexity.} By \cref{prop:energy-violations} and recalling that the weights $w(u, v)$
    are non-negative, it again suffices to show that the function $g$ is convex. Indeed, let $(x_1,
    y_1), (x_2, y_2) \in \bR^2$ and $\lambda \in [0,1]$. Then, using the convexity of the functions
    $z \mapsto z^+$ and $z \mapsto z^2$ along with the fact that the latter function is increasing
    for $z \ge 0$, we have
    \begin{align*}
        &g\left(\lambda (x_1,y_1) + (1-\lambda) (x_2,y_2)\right)
        = \frac{1}{2} \left[\left[(\lambda x_1 + (1-\lambda) x_2)
                - (\lambda y_1 + (1-\lambda) y_2)\right]^+\right]^2 \\
        &\quad = \frac{1}{2} \left[
            \left[ \lambda(x_1-y_1) + (1-\lambda)(x_2-y_2) \right]^+ \right]^2
        \le \frac{1}{2} \left[ \lambda(x_1-y_1)^+ + (1-\lambda)(x_2-y_2)^+ \right]^2 \\
        &\quad \le \lambda g(x_1, y_1) + (1-\lambda) g(x_2, y_2) \,.
        \qedhere
    \end{align*}
\end{proof}

\begin{lemma}\label{lem:gradient}
    For any $f\in L^{2}(V)$, we have $\lap^{-}f = -\frac{1}{2} \grad \dir^{-}(f)$.
\end{lemma}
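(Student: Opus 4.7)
The plan is to compute the Fréchet gradient of $\dir^{-}$ coordinatewise using \Cref{lemma:frechet}, and then directly compare the result with the formula for $\lap^{-}f$ from \Cref{def:laplacian-general}. By \Cref{lemma:energy-c1}, $\dir^{-}$ is $C^{1}$, and in particular its partial derivatives exist and are continuous, so \Cref{lemma:frechet} applies and gives
\[
(\grad \dir^{-}(f))(u) = |V|\cdot \partial_{u}\dir^{-}(f).
\]
So it suffices to compute the partial derivative of $\dir^{-}$ with respect to the coordinate at a single vertex $u$.

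First, I would use \Cref{prop:energy-violations} to rewrite
\[
\dir^{-}(f) = \frac{1}{2|V|}\sum_{u',v\in V} w(u',v)\cdot g(f(u'),f(v)),
\]
where $g$ is the function from \eqref{eq:def-of-g}, namely $g(x,y)=\tfrac{1}{2}((x-y)^{+})^{2}$. The elementary computation (already appearing in the proof of \Cref{lemma:energy-c1}) shows $\partial_{1}g(x,y) = (x-y)^{+}$ and $\partial_{2}g(x,y) = -(x-y)^{+}$, and these are continuous everywhere (including at the kink $x=y$). Summing the contributions where $u$ appears as either the first or second argument of $g$, I get
\[
\partial_{u}\dir^{-}(f) = \frac{1}{2|V|}\sum_{v\in V}\Big[w(u,v)(f(u)-f(v))^{+} - w(v,u)(f(v)-f(u))^{+}\Big].
\]
Multiplying by $|V|$ yields
\[
(\grad \dir^{-}(f))(u) = \sum_{v\in V}\Big[w(u,v)(f(u)-f(v))^{+} - w(v,u)(f(v)-f(u))^{+}\Big].
\]

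Next, I would expand $(\lap^{-}f)(u)$ directly from \Cref{def:laplacian-general} by noting that the basis expansion $e_{v}-e_{u'}$ contributes to coordinate $u$ precisely when $v=u$ (with sign $+1$) or $u'=u$ (with sign $-1$):
\[
(\lap^{-}f)(u) = \frac{1}{2}\sum_{u'\in V}w(u',u)(f(u')-f(u))^{+} - \frac{1}{2}\sum_{v\in V}w(u,v)(f(u)-f(v))^{+}.
\]
Comparing this with the expression for $(\grad \dir^{-}(f))(u)$ above, the two differ by exactly a factor of $-\tfrac{1}{2}$, which is the claimed identity. Since this holds coordinatewise for every $u\in V$, we conclude $\lap^{-}f = -\tfrac{1}{2}\grad \dir^{-}(f)$.

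The argument is essentially a careful bookkeeping of normalization factors: the factor $|V|$ coming from the change of measure between the standard inner product on $\bR^{V}$ and the inner product on $L^{2}(V)$ (as emphasized after \Cref{lemma:frechet}), the factor $\tfrac{1}{2}$ built into $g$, and the factor $\tfrac{1}{2}$ in the definition of $\lap^{-}$. The main thing to be careful about is differentiating the non-smooth expression $((x-y)^{+})^{2}$ at the kink, but this is already handled once we know $g\in C^{1}$ from \Cref{lemma:energy-c1}, so no genuine obstacle remains.
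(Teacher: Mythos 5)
Your approach is exactly the paper's: rewrite $\dir^-$ via \cref{prop:energy-violations} in terms of the $C^1$ function $g$ from \eqref{eq:def-of-g}, compute coordinatewise partial derivatives, invoke \cref{lemma:frechet} to pass to the gradient with the $|V|$ scaling, and compare to the coordinate expansion of $\lap^-$. The structure and conclusion are both right.

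There is, however, a self-cancelling arithmetic slip in the middle, which is worth flagging since you single out ``careful bookkeeping of normalization factors'' as the crux. The rewrite of $\dir^-$ in terms of $g$ should have coefficient $\frac{1}{|V|}$, not $\frac{1}{2|V|}$: \cref{prop:energy-violations} gives $\dir^-(f) = \frac{1}{2|V|}\sum_{u',v} w(u',v)\,\big((f(u')-f(v))^+\big)^2$, and since $g(x,y) = \tfrac12\big((x-y)^+\big)^2$ absorbs that $\tfrac12$, the correct identity is $\dir^-(f) = \frac{1}{|V|}\sum_{u',v} w(u',v)\, g(f(u'),f(v))$. Consequently $\partial_u \dir^-(f)$ should carry $\frac{1}{|V|}$ rather than $\frac{1}{2|V|}$. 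You then write ``multiplying by $|V|$'' and drop the spurious $\tfrac12$, which undoes the first slip and lands on the correct formula for $\grad \dir^-(f)$. The final comparison with $(\lap^- f)(u)$ is fine. So this is not a gap in the argument, just a transcription inconsistency; if you track the two factors of $\tfrac12$ correctly they cancel once (between \cref{prop:energy-violations} and the definition of $g$), leaving exactly the $|V|$ rescaling from \cref{lemma:frechet} and the $\tfrac12$ from \cref{def:laplacian-general} to account for.
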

\begin{proof}
    Recall that the function $g$ in \eqref{eq:def-of-g} has partial derivatives $\partial_1 g(x, y) = (x-y)^+$ and
    $\partial_2 g(x, y) = -(x-y)^+$. Now, by \cref{prop:energy-violations}, for each $u \in V$, the
    partial derivative $\partial_u \dir^-(f)$ (in the notation of \cref{lemma:frechet}) is
    \begin{align*}
        \partial_u \dir^-(f)
        &= \frac{1}{|V|} \sum_{v \in V} \left[
            w(u,v) \partial_1 g(f(u), f(v)) + w(v,u) \partial_2 g(f(v), f(u))
        \right] \\
        &= \frac{1}{|V|} \sum_{v \in V} \left[
            w(u,v) \left(f(u)-f(v)\right)^+ - w(v,u) \left(f(v)-f(u)\right)^+
        \right] \,.
    \end{align*}
    Thus, by \cref{lemma:frechet}, the gradient $\grad \dir^-(f)$ (which exists by
    \cref{lemma:energy-c1}) is given by
    \[
        (\grad \dir^-(f))(u) = |V| \cdot \partial_u \dir^-(f)
        = \sum_{v \in V} \left[
            w(u,v) \left(f(u)-f(v)\right)^+ - w(v,u) \left(f(v)-f(u)\right)^+
        \right]
    \]
    for each $u \in V$.
    On the other hand, by \cref{def:laplacian-general},
    \[
        (\lap^- f)(u)
        = \frac{1}{2} \sum_{v \in V} \left[
            -w(u, v) \left(f(u)-f(v)\right)^+ + w(v, u) \left(f(v)-f(u)\right)^+
        \right]
    \]
    for each $u \in V$, so $(\lap^- f)(u) = -\frac{1}{2} (\grad \dir^-(f))(u)$ as desired.
\end{proof}

\subsection{Directed heat flow}
\label{subsection:heat-flow}

As previewed in the \Cref{subsection:laplacian}, the directed Laplacian operator can be thought of as the
rate of mass transfer along $f$-antimonotone edges of $G$ in a dynamical process. Let us make this
notion precise.

Given any $f \in L^2(V)$, we define the \emph{directed heat flow} on $G$ with \emph{initial state}
$f$ as the dynamical system given by the initial value problem (IVP)

\begin{equation}
    \label{eq:ivp}
    \bm{f'}(t) = \lap^- \bm{f}(t) \quad \text{for all } t \ge 0 \,, \qquad \bm{f}(0) = f \,.
\end{equation}
Since the operator $\lap^-$ is Lipschitz by \cref{lemma:laplacian-lipschitz}, a standard existence
and uniqueness theorem for ordinary differential equations (ODEs) implies that this IVP has a unique
solution $\bm{f} : [0, +\infty) \to L^2(V)$; see \eg \cite[Corollary~2.6]{Tes12}. This can also be
shown using the theory of maximal monotone operators, as done by \cite{IMTY22} for the heat flow on
hypergraphs, and by \cite{Yos19} in a study that generalizes both the directed graph setting of
\cite{Yos16} and the hypergraph setting of \cite{IMTY22}.

Moreover, the directed heat flow enjoys the following semigroup structure. Define the operator
family $\left(P_t\right)_{t \ge 0}$, with $P_t : L^2(V) \to L^2(V)$ for each $t \ge 0$, as follows:
for each $f \in L^2(V)$, let $\bm{f} : [0, +\infty) \to L^2(V)$ be the solution to the IVP
\eqref{eq:ivp}, and let
\[
    P_t f \define \bm{f}(t) \,.
\]
Then it immediately follows that $P_t$ satisfies the properties of a semigroup, namely
\begin{enumerate}
    \item $P_0$ is the identity operator.
    \item $P_s P_t = P_{s+t}$ for all $s, t \ge 0$.
    \item $\lim_{t \to 0} P_t f = f$ for all $f \in L^2(V)$ (this follows from the differentiability
        of the solution $\bm{f}(t)$).
\end{enumerate}
We call $P_t$ the \emph{directed heat semigroup} operator. Note that $P_t$ is a nonlinear operator.

\begin{observation}[Monotone functions are stationary solutions]
    \label{obs:stationary}
    If $f \in L^2(V)$ is monotone, then $\lap^- f = 0$ and hence $P_t f = f$ for all $t \ge
    0$.
\end{observation}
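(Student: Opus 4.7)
The plan is to show the two claims in sequence, both of which follow almost immediately from the definitions. First I would verify that $\lap^- f = 0$ by going back to \Cref{def:laplacian-general}, which expresses $\lap^- f$ as a sum over pairs $(u,v) \in V \times V$ of the vectors $w(u,v)(f(u)-f(v))^+(e_v - e_u)$. The key observation is that for each such pair, either $w(u,v) = 0$, in which case the term vanishes trivially; or $w(u,v) > 0$, which by the convention stated at the start of \Cref{subsection:laplacian} means $(u,v)$ is an edge of $G$, so the monotonicity of $f$ gives $f(u) \le f(v)$ and hence $(f(u)-f(v))^+ = 0$. Either way the summand is zero, so $\lap^- f = 0$. (Equivalently, one could appeal to \Cref{prop:energy-violations} to see that $\dir^-(f) = 0$ and then to \Cref{lem:gradient} to conclude $\lap^- f = 0$, but the direct argument is cleaner.)

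Next I would deduce $P_t f = f$ from the uniqueness of solutions to the IVP \eqref{eq:ivp}. The constant curve $\bm{f}(t) \equiv f$ satisfies $\bm{f}(0) = f$ and $\bm{f}'(t) = 0 = \lap^- f = \lap^- \bm{f}(t)$ for every $t \ge 0$, so it is a solution of \eqref{eq:ivp}. By the existence-and-uniqueness theorem for ODEs (applicable here because $\lap^-$ is Lipschitz continuous on $L^2(V)$ by \Cref{lemma:laplacian-lipschitz}), this is the \emph{unique} solution, and therefore $P_t f = \bm{f}(t) = f$ for all $t \ge 0$, as desired.

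There is no real obstacle here: the statement is essentially a consistency check that the definition of the directed Laplacian correctly identifies monotone functions as fixed points of the dynamics, and the proof is a two-line unpacking of \Cref{def:laplacian-general} together with a standard ODE uniqueness argument.
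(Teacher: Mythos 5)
Your proof is correct and is exactly the argument the paper leaves implicit by labelling this an \emph{observation}: monotonicity kills every summand $(f(u)-f(v))^+$ in \cref{def:laplacian-general}, and then uniqueness of solutions to the IVP \eqref{eq:ivp} (guaranteed by the Lipschitz property in \cref{lemma:laplacian-lipschitz}) forces the constant curve to be the heat-flow solution.
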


Since monotone functions are stationary solutions to the directed heat flow (\cref{obs:stationary}),
while any non-monotone $f$ has non-zero Laplacian $\lap^{-}f$ (see \cref{prop:energy-violations}), it
is natural to expect that $P_t f$ always converges to a monotone function as $t \to \infty$. This is
indeed the case, because the directed heat flow is a \emph{gradient
system} for the \emph{convex} energy functional $\dir^-$.
\begin{proposition}[Directed heat flow is gradient system]
    \label{prop:gradient-system}
    For all $f \in L^2(V)$ and $t \ge 0$, we have
    \[
        \odv*{P_t f}{t} = -\frac{1}{2} \grad \dir^-(P_t f)\,.
    \]
\end{proposition}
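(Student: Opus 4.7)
The plan is to observe that this proposition follows essentially immediately from two results already in hand: the definition of the directed heat semigroup via the initial value problem \eqref{eq:ivp}, and \Cref{lem:gradient}, which identifies the Laplacian with (half of) the negative gradient of $\dir^-$.

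More concretely, I would first recall that by the construction of the operator family $(P_t)_{t\ge 0}$, for any fixed $f \in L^2(V)$ the curve $\bm{f}(t) \define P_t f$ is precisely the unique solution of \eqref{eq:ivp}. In particular, $\bm{f}$ is differentiable in $t$ and satisfies $\bm{f}'(t) = \lap^- \bm{f}(t)$ for every $t \ge 0$, which is exactly the statement $\odv*{P_t f}{t} = \lap^-(P_t f)$. I would then apply \Cref{lem:gradient} pointwise at the state $P_t f \in L^2(V)$ to rewrite $\lap^-(P_t f) = -\frac{1}{2}\grad \dir^-(P_t f)$, obtaining the claimed identity.

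There is no real obstacle here: both the existence/uniqueness of the solution to \eqref{eq:ivp} (and hence the meaningfulness of $\odv*{P_t f}{t}$) and the gradient identity $\lap^- = -\tfrac12 \grad \dir^-$ were established just above, the former via Lipschitz continuity of $\lap^-$ (\Cref{lemma:laplacian-lipschitz}) invoked with a standard ODE existence theorem, and the latter by direct computation of partial derivatives in \Cref{lem:gradient}. The only thing worth emphasizing in the write-up is the conceptual content: this shows that the directed heat flow is literally gradient descent on the convex functional $\dir^-$ with respect to the $L^2(V)$ inner product, which sets up the subsequent energy-decay argument and the eventual convergence of $P_t f$ to a monotone minimizer as $t \to \infty$.
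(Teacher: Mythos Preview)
Your proposal is correct and follows exactly the same approach as the paper: invoke the definition of $P_t$ via the IVP \eqref{eq:ivp} to obtain $\odv*{P_t f}{t} = \lap^-(P_t f)$, and then apply \Cref{lem:gradient} to rewrite this as $-\tfrac{1}{2}\grad\dir^-(P_t f)$.
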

\begin{proof}
By the definition of $P_{t}$ we have $\odv*{P_{t} f}{t}=\lap^{-}P_{t}f$. The claim then follows from \Cref{lem:gradient}.
\end{proof}
\begin{corollary}[Convergence to monotone equilibrium]
    \label{cor:convergence}
    For every $f \in L^2(V)$, there exists a (unique) monotone $f^* \in L^2(V)$ such that $P_t f \to
    f^*$ as $t \to \infty$.
\end{corollary}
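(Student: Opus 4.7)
The plan is to exploit the gradient-system structure from \cref{prop:gradient-system} together with the convexity of $\dir^-$ given by \cref{lemma:energy-c1}. First I would establish the standard \emph{energy decay identity} by differentiating along the flow:
\[
\odv{\dir^-(P_t f)}{t} = \inp{\grad \dir^-(P_t f)}{\odv*{P_t f}{t}} = -\frac{1}{2}\|\grad \dir^-(P_t f)\|^2 \le 0,
\]
so $\dir^-(P_t f)$ is non-increasing in $t$ and converges to some limit $E^* \ge 0$.

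Second, I would prove a convexity-based Fej\'er-type non-expansiveness against minimizers of $\dir^-$. For any monotone $g \in L^2(V)$ we have $\dir^-(g) = 0 = \min \dir^-$, so the gradient inequality for the $C^1$ convex $\dir^-$ gives $\inp{\grad \dir^-(P_t f)}{g - P_t f} \le -\dir^-(P_t f) \le 0$, whence
\[
\odv{\|P_t f - g\|^2}{t} = 2\inp{\odv*{P_t f}{t}}{P_t f - g} = \inp{\grad \dir^-(P_t f)}{g - P_t f} \le 0.
\]
Thus $t \mapsto \|P_t f - g\|$ is non-increasing for every monotone $g$. Taking $g \equiv 0$ shows that the trajectory $\{P_t f : t \ge 0\}$ is bounded in $L^2(V)$, so by finite-dimensional compactness there exist $t_n \to \infty$ and $f^* \in L^2(V)$ with $P_{t_n} f \to f^*$; continuity of $\dir^-$ then yields $\dir^-(f^*) = E^*$.

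The main step is to show that $f^*$ is monotone, \ie that $E^* = 0$. Suppose for contradiction $E^* > 0$. Because $\dir^-$ is a $C^1$ convex function whose minimum value $0$ is attained on every monotone function, every critical point is automatically a global minimizer; hence $\grad \dir^-(f^*) \ne 0$, or else $\dir^-(f^*)$ would be zero. Consequently, running the flow from $f^*$ strictly decreases the energy for small times, so there exists $s > 0$ with $\dir^-(P_s f^*) < E^*$. By \cref{lemma:laplacian-lipschitz} and Gr\"onwall's inequality, the map $P_s : L^2(V) \to L^2(V)$ depends continuously on its initial datum, so $P_{t_n + s} f = P_s(P_{t_n} f) \to P_s f^*$; continuity of $\dir^-$ then forces $\dir^-(P_{t_n + s} f) \to \dir^-(P_s f^*) < E^*$, contradicting $\dir^-(P_t f) \searrow E^*$.

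Finally, since $f^*$ is monotone, the non-expansiveness established earlier implies $\|P_t f - f^*\|$ is non-increasing in $t$; combined with $\|P_{t_n} f - f^*\| \to 0$, this forces $\lim_{t \to \infty} \|P_t f - f^*\| = 0$, and uniqueness of the limit is immediate. I expect the chief obstacle to be the limit-point argument showing $f^*$ is monotone, as it must marry the ODE input (continuous dependence on initial data, via Lipschitzness of $\lap^-$) with the convexity input (critical points of a convex function are global minimizers); once these are in place, the remaining ``convergence-of-the-whole-trajectory'' step reduces to the Fej\'er-monotonicity already used to obtain boundedness.
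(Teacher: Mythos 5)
Your proposal is correct, and it gives a complete, self-contained proof of a fact that the paper simply cites: the paper's own proof of \cref{cor:convergence} is a single sentence invoking the standard theorem that the gradient system of a $C^1$ convex function attaining its minimum converges to a minimizer (\cite[Chapter~3, Theorem~2]{AC84}), then concluding from \cref{prop:energy-violations} that minimizers are monotone. Your argument unwinds that black box: you derive energy monotonicity from \cref{prop:gradient-system}, establish Fej\'er-type non-expansiveness $\odv*{\|P_t f - g\|^2}{t} \le 0$ for every monotone $g$ via the first-order convexity inequality, use $g \equiv 0$ to get boundedness and hence a subsequential limit $f^*$, prove $E^* = 0$ (so $f^*$ is monotone) by combining the fact that critical points of a convex $C^1$ function are global minimizers with continuous dependence on initial data (Gr\"onwall, using \cref{lemma:laplacian-lipschitz}), and close the loop by applying Fej\'er monotonicity with $g = f^*$ to upgrade subsequential convergence to full convergence. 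Each step is sound. The trade-off is transparency versus brevity: the paper's citation is shorter, while your argument is essentially the proof of the cited theorem specialized to this finite-dimensional setting, which makes it more accessible to a reader who does not want to consult the reference, and it pinpoints exactly which structural facts (convexity, $C^1$, Lipschitz generator, existence of a minimizer) are being used.
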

\begin{proof}
    It is a standard fact that since $\dir^-$ is a convex differentiable function which attains its
    minimum, its gradient system (which is given by $P_t$) converges to a minimizer $f^*$; see \eg
    \cite[Chapter~3, Theorem~2]{AC84}. By \cref{prop:energy-violations}, such a minimizer is
    monotone.
\end{proof}

In light of \cref{cor:convergence}, we may define the following limit operator.
\begin{definition}[Monotone equilibrium]
    We define the operator $P_\infty : L^2(V) \to L^2(V)$ by $P_\infty f \define \lim_{t \to \infty}
    P_t f$ for each $f \in L^2(V)$, and call $P_\infty f$ the \emph{monotone equilibrium} of $f$.
\end{definition}

\begin{remark}
    Since $P_\infty f$ is monotone, it is constant in each strongly connected component of $G$.
\end{remark}

\subsection{Dynamical spectral gap}
\label{subsection:spectral-gap}

In this subsection, we associate with the directed Laplacian operator $\lap^- = \lap_{G}^{-}$ a
quantity $\lambda^- = \lambda^-(G)$, the \emph{dynamical spectral gap} of $G$, as a natural directed
generalization of the classical (undirected) case. In particular, $\lambda^-(G)$ characterizes the
rate of energy decay in the directed heat flow as $P_t f$ converges to its monotone equilibrium, and
this also implies a directed Poincaré inequality linking the distance between the initial and
equilibrium states to the energy of the initial state (\ie its violations of monotonicity).

In the classical theory, the spectral gap $\lambda(G)$ associated with the classical Laplacian
operator $\lap_{G}$ is
\begin{equation}
    \label{eq:classical-spectral-gap}
    \lambda(G) = \inf_{f \ne \vec{0} : \;\Ex{f}=0} \frac{-\inp{f}{\lap_{G} f}}{\|f\|_{2}^{2}} \,,
\end{equation}
which is the best constant for which the Poincaré inequality
\[
    \Var{f} \le \frac{1}{\lambda(G)} \cE(f)
\]
holds, where the classical Dirichlet energy is $\cE(f) = -\inp{f}{\lap_{G}f}$. As mentioned at the
beginning of this section, the spectral gap
also characterizes the rate of
exponential decay of the variance toward its minimum value under the heat flow. That is, $\lambda(G)$ is the best constant for
which
\begin{equation}
    \label{eq:classical-global-energy-decay}
    \odv*{\Var{\widetilde{P_t} f}}{t} \le -2\lambda(G) \Var{\widetilde{P_t} f}
\end{equation}
holds for all $f \in L^2(V)$ and $t \ge 0$, where $\widetilde{P_{t}}$ is the undirected heat flow. It also similarly characterizes the exponential decay of
the Dirichlet energy, \ie $\lambda(G)$ is the best constant for which
\begin{equation}
    \label{eq:classical-local-energy-decay}
    \odv*{\cE(\widetilde{P_t} f)}{t} \le -2\lambda(G) \cE(\widetilde{P_t} f)
\end{equation}
holds for all $f \in L^2(V)$ and $t \ge 0$. We refer the reader to \cite{Han14} for a comprehensive
reference.

We wish to define a natural notion of spectral gap for the directed Laplacian $\lap^-$, towards the
goal of obtaining a directed Poincaré inequality. One attempt is to take the directed analogue of
the ratio in \eqref{eq:classical-spectral-gap}, namely
\begin{equation}
    \label{eq:directed-attempt}
    \frac{\dir^-(f)}{\|f\|_{2}^{2}}
    = \frac{-\inp{f}{\lap^- f}}{\|f\|_{2}^{2}} \,.
\end{equation}
(This would essentially be the unnormalized version of the spectral gap defined by \cite{Yos16}
for the normalized nonlinear Laplacian.) However, this quantity faces the issue that any
monotone function $f$ has $\dir^-(f) = 0$, which makes the ratio zero\footnote{While for strongly
    connected directed graphs only constant functions are monotone, which renders the objection
void, this is not the case for general directed graphs.}\!\!. Even if we restricted our
attention to non-monotone $f$, in general $\|f\|_{2}^2$ could still be made arbitrarily large while
keeping $\dir^-(f) > 0$ bounded, by putting large values of $f(u)$ that do not introduce violations
of monotonicity. Therefore \eqref{eq:directed-attempt} does not seem like an informative quantity
for our purposes.

Another way to see the conceptual issue is that, unlike in the classical case, in the directed
setting we cannot hope to have variance decay as in \eqref{eq:classical-global-energy-decay}, since
the monotone equilibrium $P_\infty f$ may have strictly positive variance. However, as our next
attempt will reveal, we \emph{can} establish the decay of Dirichlet energy as in
\eqref{eq:classical-local-energy-decay}, which will suffice to give a directed Poincaré inequality
for $G$.

We start by observing that, in the classical case, the ratio
\begin{equation}
    \label{eq:undirected-better}
    \inf_{f : \cE(f) > 0} \frac{\|\lap f\|_{2}^2}{\cE(f)}
\end{equation}
equals the smallest \emph{non-zero} eigenvalue of the classical Laplacian $-\lap_{G}$, so for a
\emph{connected} graph it also equals the spectral gap $\lambda(G)$. Now, its directed analogue
\begin{equation}
    \label{eq:directed-better}
    \inf_{f : \dir^-(f) > 0} \frac{\|\lap^- f\|_{2}^2}{\dir^-(f)}
\end{equation}
seems like a reasonable candidate -- intuitively, note that not only does the constraint $\dir^-(f)
> 0$ rule out monotone functions $f$, but also, $f$-monotone edges $(u,v)$ with large $f(v)-f(u)$
contribute nothing to the denominator, suggesting that \eqref{eq:directed-better} does not have the
same shortcoming as \eqref{eq:directed-attempt}\footnote{
    For example, let $G$ consist of two disjoint edges $(u, v)$ and $(u', v')$, and consider the
    sequence $(f_i)_{i \in \bN}$ with $f_i(u) = 0$, $f_i(v) = i$, $f(u') = 1$, and $f(v') = 0$. Then
    $\dir^-(f_i)/\|f_i\|_{2}^2 \to 0$ as $i \to \infty$, while $\|\lap^-
    f_i\|_{2}^2/\dir^-(f_i)$ remains constant.
}\!\!. Another way to see the intuition behind \eqref{eq:directed-better} is that, while its
classical counterpart \eqref{eq:undirected-better} only differs from
\eqref{eq:classical-spectral-gap} by ignoring the zero eigenvalues coming from the single
pathological case of disconnected graphs, the directed \eqref{eq:directed-better} ignores many more
uninformative situations and captures the connectivity of $G$ in the appropriate, directed sense.

Therefore, we define

\begin{definition}
    \label{def:directed-spectral-gap}
    The \emph{dynamical spectral gap} of $G$ is the quantity $\lambda^-(G) \in [0, +\infty]$ given by
    \[
        \lambda^-(G) \define
        \inf \left\{
            \frac{\|\lap^- f\|_{2}^2}{\dir^-(f)} \;\; \Big| \;\;
                f \in L^2(V) \,,\,\, \dir^-(f) > 0
        \right\} \,.
    \]
\end{definition}

The next proposition shows that every graph has a non-zero dynamical spectral gap.

\begin{proposition}
    The dynamical spectral gap $\lambda^-(G)$ is strictly positive, and it is a real number unless
    $G$ consists of isolated vertices only.
\end{proposition}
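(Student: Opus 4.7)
The plan is to address the finiteness and strict positivity claims separately. For the finiteness claim, I would use \cref{prop:energy-violations}: if $G$ has no edges then $\dir^-(f) = 0$ for every $f \in L^2(V)$, so $\lambda^-(G)$ is the infimum over the empty set and equals $+\infty$. Conversely, if $w(u_0, v_0) > 0$ for some pair $(u_0,v_0)$, I would exhibit an explicit finite upper bound on $\lambda^-(G)$ by choosing $f \define e_{u_0}$, for which $\dir^-(f) > 0$ and $\|\lap^- f\|_{2}^{2} < \infty$.

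The main work is in showing strict positivity when $G$ has at least one edge. My idea is to partition $L^2(V)$ by the sign pattern of $f$ on directed edges: for each $E^- \subseteq V \times V$, I set $E^+(f) \define \{(u,v) : w(u,v) > 0,\, f(u) > f(v)\}$ and $C_{E^-} \define \{f \in L^2(V) : E^+(f) = E^-\}$. Each nonempty $C_{E^-}$ is a relatively open polyhedral cone, and there are only finitely many such subsets. On $C_{E^-}$, \cref{def:laplacian-general} shows that $\lap^- f$ coincides with a fixed linear map $M_{E^-} f$, while \cref{prop:energy-violations} shows that $\dir^-(f)$ coincides with a fixed positive semidefinite quadratic form $Q_{E^-}(f)$; so the ratio being minimized becomes a generalized Rayleigh quotient $\|M_{E^-} f\|_{2}^{2} / Q_{E^-}(f)$ on each cone.

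The crux will be to show that $M_{E^-}$ and $Q_{E^-}$ share the same kernel. Any $E^-$ realized as some $E^+(f)$ must be acyclic as a directed graph (a directed cycle would force $f(u) > f(v) > \cdots > f(u)$), so I may view $E^-$ as an undirected weighted graph $G'$; by the same identification as in \cref{obs:standard-laplacian}, $M_{E^-}$ is then (half of) the standard Laplacian of $G'$, whose kernel consists of functions constant on each connected component of $G'$\textemdash which is precisely the kernel of $Q_{E^-}$. Passing to the quotient by this common kernel, both forms become positive definite, and a compactness argument on the quotient's unit sphere yields a strictly positive lower bound $\lambda_{E^-} > 0$ on the ratio. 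The conclusion then follows from $\lambda^-(G) \ge \min_{E^-} \lambda_{E^-} > 0$, with the minimum taken over the finitely many nonempty realizable $E^-$. The main obstacle\textemdash that $\dir^-(f)$ might shrink much faster than $\|\lap^- f\|_{2}^{2}$ near the boundary of a cone\textemdash is exactly what the matching-kernels property will rule out.
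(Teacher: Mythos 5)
Your proof is correct and takes essentially the same approach as the paper's: both arguments observe that, for each $f$, the directed Rayleigh quotient $\|\lap^- f\|_2^2 / \dir^-(f)$ coincides with an ordinary Rayleigh quotient for the symmetric Laplacian of an auxiliary undirected graph determined by $f$, and then lower bound $\lambda^-(G)$ by the minimum of the corresponding (nonzero) spectral gaps over the finitely many auxiliary graphs that can arise. The only difference is notational: you index the pieces by the $f$-antimonotone edge set $E^-$, whereas the paper indexes them by a sorting permutation $\pi$ of the vertices, yielding the same auxiliary graph up to edges that contribute zero.
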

\begin{proof}
    If $G$ contains at least one edge $e = (u, v)$, then any function $f \in L^2(V)$ with $f(u) = 1$
    and $f(v) = 0$ has $\dir^-(f) > 0$, so $\lambda^-(G) < +\infty$.

    We now show that $\lambda^-(G) > 0$. The idea is that, for each function $f \in L^2(V)$, the
    action of $\lap^-$ on $f$ is the same as the action of the standard Laplacian on a subgraph of
    $G$ (made undirected) that preserves only the $f$-antimonotone edges (as noted in
    \cref{obs:standard-laplacian}). The number of such subgraphs is at most $|V|!$, \ie the number
    of permutations induced by $f$, so $\lambda^-(G)$ is lower bounded by the minimum non-zero
    eigenvalue out of finitely many graphs.

    Formally, let $n \define |V|$, fix any $f \in L^2(V)$ satisfying $\dir^-(f) > 0$, and let $\pi :
    V \to [n]$ be a bijection such that $f({\pi^{-1}(1)}) \le f({\pi^{-1}(2)}) \le \dotsm \le
    f({\pi^{-1}(n)})$. Let $G_\pi = (V, w_\pi)$ be the undirected, weighted graph defined as follows:
    for each $u, v \in V$,
    \[
        w_\pi(u, v) \define \begin{cases}
            \frac{1}{2} w(u, v) & \text{if } \pi(u) > \pi(v) \\
            \frac{1}{2} w(v, u) & \text{otherwise.}
        \end{cases}
    \]
    Let $\lap_{\pi}\define \lap_{G_\pi}$ denote the standard Laplacian operator for $G_\pi$. We claim that
    $\lap^- f = \lap_\pi f$. First, by definition of the standard Laplacian, for any $z \in V$ we have
    \[
        (\lap_\pi f)(z) = \sum_{v \in V} w_\pi(z, v) \left(f(v) - f(z)\right) \,.
    \]
    On the other hand, \cref{def:laplacian-general} and the definition of $w_\pi$ yield
    \begin{align*}
        (\lap^- f)(z)
        &= \frac{1}{2} \sum_{u,v \in V} w(u,v) \left(f(u)-f(v)\right)^+ (e_v(z)-e_u(z)) \\
        &= \frac{1}{2} \sum_{u : \pi(u)>\pi(z)} w(u,z) \left(f(u)-f(z)\right)
            - \frac{1}{2} \sum_{v : \pi(z)>\pi(v)} w(z,v) \left(f(z)-f(v)\right) \\
        &= \sum_{v : \pi(v)>\pi(z)} w_\pi(v,z) \left(f(v)-f(z)\right)
            + \sum_{v : \pi(z)>\pi(v)} w_\pi(v,z) \left(f(v)-f(z)\right) \\
        &= \sum_{v \in V} w_\pi(v,z) \left(f(v)-f(z)\right)
        = (\lap_\pi f)(z) \,,
    \end{align*}
    so indeed $\lap^- f = \lap_\pi f$. Let $\lambda_\pi$ denote the smallest non-zero eigenvalue of
    $-\lap_\pi$, which exists because $G_\pi$ contains at least one edge since $f$ is not monotone. Then
    \[
        \frac{\|\lap^- f\|_{2}^2}{\dir^-(f)}
        = \frac{\|\lap_\pi f\|_{2}^2}{-\inp{f}{\lap^- f}}
        = \frac{\|\lap _\pi f\|_{2}^2}{\inp{f}{-\lap_\pi f}}
        \ge \lambda_\pi \,.
    \]
    Let $\lambda^* \define \min_\pi \lambda_\pi$, where $\pi$ ranges over all bijections from $V$ to $[n]$
    for which $-\lap_\pi$ has a non-zero eigenvalue. Then $\lambda^*$ is strictly positive, and for each
    $g \in L^2(V)$ with $\dir^-(g) > 0$, we have
    \[
        \frac{\|\lap^- g\|_{2}^2}{\dir^-(g)} \ge \lambda^* \,.
    \]
    Thus $\lambda^-(G) > 0$ as claimed.
\end{proof}

Next, we observe that $\lambda^-(G)$ characterizes the rate of exponential decay of $\dir^-(P_t f)$.
The key fact is that, in a gradient system, the rate of energy decay is governed by the rate of
change of the state.

\begin{proposition}
    \label{prop:rate-of-change-energy}
    Let $f \in L^2(V)$. Then the function $t \mapsto \dir^-(P_t f)$ is differentiable and, for all
    $t \ge 0$, 
    \[
        \odv*{\dir^-(P_t f)}{t} = -2\left\| \odv*{P_t f}{t} \right\|_{2}^2 \,.
    \]
\end{proposition}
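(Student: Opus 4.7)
The plan is to apply the chain rule to the composition $t \mapsto \dir^-(P_t f)$, using the two key facts already established: (i)~$\dir^-$ is Fr\'echet $C^1$ with gradient satisfying $\lap^- g = -\tfrac{1}{2} \grad \dir^-(g)$ (\Cref{lemma:energy-c1,lem:gradient}); and (ii)~the curve $t \mapsto P_t f$ is differentiable with $\odv*{P_t f}{t} = \lap^- P_t f$ by the definition of the heat semigroup and the IVP \eqref{eq:ivp}.

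First, I would verify that the standard chain rule for Fr\'echet-differentiable functionals applies. Concretely, if $F : L^2(V) \to \bR$ is $C^1$ and $\bm{g} : [0, +\infty) \to L^2(V)$ is differentiable at $t$, then $t \mapsto F(\bm{g}(t))$ is differentiable at $t$ with derivative $DF(\bm{g}(t))(\bm{g}'(t)) = \inp{\grad F(\bm{g}(t))}{\bm{g}'(t)}$. This is a textbook consequence of the definition of Fr\'echet differentiability combined with a standard $\epsilon$-argument, and since $L^2(V)$ is finite-dimensional there are no subtleties about norms; I would invoke it without reproving it.

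Applying this with $F = \dir^-$ and $\bm{g}(t) = P_t f$ gives differentiability of $t \mapsto \dir^-(P_t f)$ and
\[
\odv*{\dir^-(P_t f)}{t} = \left\langle \grad \dir^-(P_t f), \odv*{P_t f}{t} \right\rangle.
\]
Next I would substitute the gradient identity of \Cref{lem:gradient}: since $\lap^- P_t f = -\tfrac{1}{2} \grad \dir^-(P_t f)$ and $\odv*{P_t f}{t} = \lap^- P_t f$, we have $\grad \dir^-(P_t f) = -2 \, \odv*{P_t f}{t}$. Plugging this in yields
\[
\odv*{\dir^-(P_t f)}{t} = \left\langle -2 \, \odv*{P_t f}{t}, \odv*{P_t f}{t} \right\rangle = -2 \left\| \odv*{P_t f}{t} \right\|_{2}^{2},
\]
which is the desired identity.

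The argument has essentially no obstacle; the ``work'' has already been front-loaded into \Cref{lemma:energy-c1} (establishing $C^1$-ness of $\dir^-$) and \Cref{lem:gradient} (identifying $\lap^-$ as $-\tfrac{1}{2}$ times the gradient of $\dir^-$). The only minor subtlety is making sure the differentiability at the endpoint $t = 0$ is handled correctly, which one can address by interpreting the derivative as a one-sided derivative there; this follows from the same chain-rule computation since $P_t f$ is right-differentiable at $t=0$ by the IVP.
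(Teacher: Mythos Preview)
Your proposal is correct and follows essentially the same approach as the paper: apply the chain rule for the Fr\'echet derivative to obtain $\odv*{\dir^-(P_t f)}{t} = \inp{\grad \dir^-(P_t f)}{\odv*{P_t f}{t}}$, then substitute $\grad \dir^-(P_t f) = -2\,\odv*{P_t f}{t}$ (which the paper packages as \cref{prop:gradient-system}, combining \cref{lem:gradient} with the IVP just as you do). Your additional remarks on justifying the chain rule and the one-sided derivative at $t=0$ are correct elaborations that the paper leaves implicit.
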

\begin{proof}
    By the chain rule (for the Fréchet derivative) and \cref{prop:gradient-system},
    \[
        \odv*{\dir^-(P_t f)}{t}
        = \inp{\grad \dir^-(P_t f)}{\odv*{P_t f}{t}}
        = \inp{-2\odv*{P_t f}{t}}{\odv*{P_t f}{t}}
        = -2\left\| \odv*{P_t f}{t} \right\|_{2}^2 \,. \qedhere
    \]
\end{proof}

\begin{corollary}
    \label{prop:exponential-decay}
    Let $f \in L^2(V)$. Then the function $t \mapsto \dir^-(P_t f)$ satisfies, for all $t \ge 0$, 
    \begin{equation}
        \label{eq:directed-spectral-gap-decay}
        \odv*{\dir^-(P_t f)}{t} \le -2\lambda^-(G)\cdot \dir^-(P_t f) \,.
    \end{equation}
    Furthermore, $\lambda^-(G)$ is the largest constant for which
    \eqref{eq:directed-spectral-gap-decay} holds for all $f, t$.
\end{corollary}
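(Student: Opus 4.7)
The plan is to derive the exponential decay inequality by combining \Cref{prop:rate-of-change-energy} (which expresses the rate of change of $\dir^-(P_t f)$ in terms of the rate of change of the state $P_t f$) with the defining property of $\lambda^-(G)$ (\Cref{def:directed-spectral-gap}). The key observation is that, because $P_t$ is defined by the IVP \eqref{eq:ivp}, we have $\odv*{P_t f}{t} = \lap^- P_t f$, so \Cref{prop:rate-of-change-energy} rewrites as
\[
    \odv*{\dir^-(P_t f)}{t} = -2 \| \lap^- P_t f \|_{2}^2 \,.
\]

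From here, I would split into two cases according to whether $\dir^-(P_t f) > 0$ or $\dir^-(P_t f) = 0$. In the first case, \Cref{def:directed-spectral-gap} applied to $P_t f$ yields $\|\lap^- P_t f\|_{2}^2 \ge \lambda^-(G) \cdot \dir^-(P_t f)$, which combined with the identity above immediately gives \eqref{eq:directed-spectral-gap-decay}. In the second case, $P_t f$ is monotone (by \Cref{prop:energy-violations}), so $\lap^- P_t f = 0$ and both sides of \eqref{eq:directed-spectral-gap-decay} vanish. This gives the inequality for all $f, t$.

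For the tightness claim, I would show that any constant $c$ for which \eqref{eq:directed-spectral-gap-decay} holds (with $c$ in place of $\lambda^-(G)$) for every $f$ and every $t \ge 0$ must satisfy $c \le \lambda^-(G)$. The plan is to specialize to $t = 0$: using $\odv*{\dir^-(P_t f)}{t}\big|_{t=0} = -2 \|\lap^- f\|_{2}^2$ and $\dir^-(P_0 f) = \dir^-(f)$, the hypothesized inequality reduces to $\|\lap^- f\|_{2}^2 \ge c\, \dir^-(f)$ for every $f \in L^2(V)$. Restricting to $f$ with $\dir^-(f) > 0$ and taking the infimum of $\|\lap^- f\|_{2}^2/\dir^-(f)$ gives $c \le \lambda^-(G)$ by \Cref{def:directed-spectral-gap}.

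I do not anticipate any serious obstacle here: the argument is essentially a one-line computation once \Cref{prop:rate-of-change-energy} and the IVP formulation are invoked. The only minor subtlety to be careful about is the case $\dir^-(P_t f) = 0$ (where the ratio in \Cref{def:directed-spectral-gap} is not defined), which is handled by noting that monotonicity of $P_t f$ forces $\lap^- P_t f = 0$ as well.
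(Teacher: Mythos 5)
Your proposal is correct and follows essentially the same route as the paper: combine \cref{prop:rate-of-change-energy} with \cref{def:directed-spectral-gap} for the forward direction, and for tightness note that the hypothesized inequality at $t=0$ forces $\|\lap^- f\|_2^2 \ge c\,\dir^-(f)$, so $c$ is at most the infimum. Your explicit treatment of the $\dir^-(P_t f)=0$ case (via monotonicity and \cref{obs:stationary}) is a small but welcome bit of care that the paper leaves implicit, and your tightness argument phrases the same idea directly in terms of the infimum rather than via an $\epsilon$-approximation as the paper does, which are equivalent.
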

\begin{proof}
    By \cref{prop:rate-of-change-energy},
    \[
        \odv*{\dir^-(P_t f)}{t}
        = -2\left\| \odv*{P_t f}{t} \right\|_{2}^2
        = -2\|\lap^- P_t f\|_{2}^2 \,.
    \]
    By \cref{def:directed-spectral-gap}, $\lambda^-(G) \le \|\lap^- P_t
    f\|_{2}^2/\dir^-(P_t f)$, whence \eqref{eq:directed-spectral-gap-decay} follows.
    Conversely, for every $\epsilon > 0$, by definition of infimum we can find $g \in L^2(V)$ such
    that $\|\lap^- g\|_{2}^2/\dir^-(g) < \lambda^-(G) + \epsilon$, in which case we
    have (recalling that $P_0 g = g$)
    \[
        \odv*{\dir^-(P_0 g)}{t}
        = -2\left\|\lap^- P_0 g\right\|_{2}^2
        > -2\left(\lambda^-(G)+\epsilon\right) \dir^-(P_0 g) \,,
    \]
    so the constant $\lambda^-(G)$ is indeed tight.
\end{proof}

We can now show that the dynamical spectral gap mediates an upper bound on the distance from the
initial state $f$ to its monotone equilibrium $P_\infty f$ in terms of the directed Dirichlet energy
of $f$, \ie a directed Poincaré inequality.

\begin{theorem}[Directed Poincaré inequality]
    \label{thm:directed-poincare-spectral}
    For all $f \in L^2(V)$, it holds that
    \[
        \|f - P_\infty f\|_{2}^2 \le \frac{1}{\lambda^-(G)} \, \dir^-(f) \,.
    \]
\end{theorem}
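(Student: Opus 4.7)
\medskip
\noindent\textbf{Proof proposal.}
The plan is to bound $\|f - P_\infty f\|_{2}$ by integrating the rate of change of $P_t f$ along the heat flow and then controlling that integral via the exponential energy decay. Concretely, since $\odv*{P_t f}{t} = \lap^- P_t f$ and $P_t f \to P_\infty f$ as $t \to \infty$ (by \Cref{cor:convergence}), the fundamental theorem of calculus gives
\[
    P_\infty f - f = \int_0^\infty \lap^- P_t f \odif t \,,
\]
so by the triangle inequality in $L^2(V)$ it suffices to show
\[
    \int_0^\infty \|\lap^- P_t f\|_{2} \odif t \le \frac{1}{\sqrt{\lambda^-(G)}} \sqrt{\dir^-(f)} \,.
\]

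The key step is a pointwise (in $t$) bound on $\|\lap^- P_t f\|_{2}$ using the energy. Setting $\psi(t) \define \sqrt{\dir^-(P_t f)}$, \Cref{prop:rate-of-change-energy} gives
\[
    \odv*{\psi(t)}{t}
    = \frac{1}{2\psi(t)} \odv*{\dir^-(P_t f)}{t}
    = -\frac{\|\lap^- P_t f\|_{2}^2}{\sqrt{\dir^-(P_t f)}}
    = -\|\lap^- P_t f\|_{2} \cdot \frac{\|\lap^- P_t f\|_{2}}{\sqrt{\dir^-(P_t f)}} \,.
\]
By \Cref{def:directed-spectral-gap}, $\|\lap^- P_t f\|_{2}/\sqrt{\dir^-(P_t f)} \ge \sqrt{\lambda^-(G)}$ whenever $\dir^-(P_t f) > 0$, so
\[
    \|\lap^- P_t f\|_{2} \le -\frac{1}{\sqrt{\lambda^-(G)}} \odv*{\psi(t)}{t} \,.
\]

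Integrating this inequality over $[0, \infty)$ telescopes: using that $\psi(0) = \sqrt{\dir^-(f)}$ and $\psi(t) \to 0$ as $t \to \infty$ (since $P_\infty f$ is monotone, hence $\dir^-(P_\infty f) = 0$ by \Cref{prop:energy-violations}, combined with the exponential decay from \Cref{prop:exponential-decay}), we obtain
\[
    \int_0^\infty \|\lap^- P_t f\|_{2} \odif t \le \frac{\psi(0)}{\sqrt{\lambda^-(G)}} = \frac{\sqrt{\dir^-(f)}}{\sqrt{\lambda^-(G)}} \,.
\]
Squaring $\|f - P_\infty f\|_{2} \le \int_0^\infty \|\lap^- P_t f\|_{2} \odif t$ then yields the desired inequality. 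Care is needed at points where $\psi(t) = 0$, but at any such $t$ we have $\dir^-(P_t f) = 0$ and hence $\lap^- P_t f = 0$ by \Cref{prop:energy-violations}, so the integrand vanishes and the intervals where $\psi > 0$ can be treated separately; the main (mild) technical obstacle is this continuity/integrability check, together with moving the norm inside the Bochner-type integral over $t$, both of which are routine given the Lipschitz continuity of $\lap^-$ (\Cref{lemma:laplacian-lipschitz}) and the exponential decay of $\dir^-(P_t f)$.
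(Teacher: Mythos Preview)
Your proposal is correct and follows essentially the same approach as the paper: bound $\|f-P_\infty f\|_2$ by $\int_0^\infty \|\lap^- P_t f\|_2\,\odif t$ and control the integrand via $\psi(t)=\sqrt{\dir^-(P_t f)}$. The only cosmetic difference is that the paper packages your $\psi$-computation into a separate calculus lemma (bounding $\int\sqrt{-F'}$ when $F'\le -KF$) and invokes the exponential-decay corollary rather than the definition of $\lambda^-(G)$ directly, but the underlying argument is identical.
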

\begin{proof}
    Since $P_t f \to P_\infty t$, it suffices to show that $\|f - P_t f\|_{L^2(V)}^2$ satisfies the
    claimed upper bound for all $t \ge 0$. By the triangle inequality and
    \cref{prop:rate-of-change-energy},
    \[
        \|f - P_t f\|_{2}
        = \left\| \int_0^t \odv*{P_s f}{s} \odif s \right\|_{2}
        \le \int_0^t \left\| \odv*{P_s f}{s} \right\|_{2} \odif s
        = \int_0^t \sqrt{-\frac{1}{2} \cdot \odv*{\dir^-(P_s f)}{s}} \odif s \,.
    \]
    We now appeal to the exponential rate of decay of $\dir^-(P_t f)$ (\cref{prop:exponential-decay})
    via a straightforward calculus lemma stated below (\cref{lemma:exponential-decay-integral-sqrt})
    to conclude that
    \[
        \|f - P_t f\|_{2}
        \le \sqrt{\frac{1}{2}} \cdot \frac{2}{\sqrt{2\lambda^-(G)}} \sqrt{\dir^-(P_0 f)}
        = \frac{1}{\sqrt{\lambda^-(G)}} \sqrt{\dir^-(f)} \,. \qedhere
    \]
\end{proof}

We used the following lemma, whose proof we defer to \cref{appendix:lemmas}.

\begin{restatable}{lemma}{lemmaexponentialdecayintegralsqrt}
    \label{lemma:exponential-decay-integral-sqrt}
    Let $t > 0$ and suppose that $F : [0, t] \to [0, +\infty)$ is differentiable and satisfies
    $F'(s) \le -K F(s)$ for all $s \in [0,t]$, for some $K > 0$. Then it holds that
    \[
        \int_0^t \sqrt{-F'(s)} \odif s \le \frac{2}{\sqrt{K}} \sqrt{F(0)} \,.
    \]
\end{restatable}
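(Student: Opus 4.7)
The plan is to first leverage the differential inequality $F'(s) \le -K F(s)$ to obtain an exponential decay bound on $F$ itself, and then to apply the Cauchy--Schwarz inequality with a carefully chosen split of the integrand $\sqrt{-F'(s)}$ so that both resulting factors can be estimated cleanly in terms of $\sqrt{F(0)}$ and $K$.

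More concretely, the first step is to observe that the hypothesis $F'(s) \le -K F(s)$ together with $F \ge 0$ implies (by a standard Gr\"onwall argument) the pointwise bound $F(s) \le F(0) e^{-Ks}$ for all $s \in [0,t]$. In particular $F$ is nonincreasing, so $-F'(s) \ge 0$ throughout, and the integrand $\sqrt{-F'(s)}$ is well defined. The second step is the key trick: write
\[
\sqrt{-F'(s)} \;=\; \frac{\sqrt{-F'(s)}}{F(s)^{1/4}} \cdot F(s)^{1/4},
\]
and apply Cauchy--Schwarz to the two factors, yielding
\[
\int_0^t \sqrt{-F'(s)} \odif s \;\le\; \sqrt{\int_0^t \frac{-F'(s)}{\sqrt{F(s)}}\odif s} \cdot \sqrt{\int_0^t \sqrt{F(s)}\odif s}.
\]
The third step is to evaluate each factor. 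For the first, notice that $-F'(s)/\sqrt{F(s)}$ is (up to a factor of $2$) minus the derivative of $2\sqrt{F(s)}$, so the integral telescopes to $2\sqrt{F(0)} - 2\sqrt{F(t)} \le 2\sqrt{F(0)}$. For the second, apply the exponential decay $F(s) \le F(0) e^{-Ks}$ to bound $\sqrt{F(s)} \le \sqrt{F(0)} e^{-Ks/2}$, and integrate over $[0,\infty)$ to get at most $2\sqrt{F(0)}/K$. Multiplying the square roots yields the claimed bound $2\sqrt{F(0)}/\sqrt{K}$.

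The only technical subtlety is that the split $\sqrt{-F'(s)}/F(s)^{1/4}$ is singular where $F$ vanishes. Since $F$ is nonincreasing and nonnegative, if it vanishes then it does so on a terminal subinterval $[s_0,t]$, where $F' = 0$ as well, so that subinterval contributes nothing to $\int_0^t\sqrt{-F'(s)}\odif s$ and the argument can be carried out on $[0,s_0)$ only; alternatively one may apply the bound to $F + \epsilon$ and let $\epsilon \to 0^+$, which preserves both the hypothesis and the conclusion in the limit. This is the only place requiring any care, but it is routine; the heart of the proof is the Cauchy--Schwarz split, which is suggested precisely because it balances the square-integrated derivative against the square-integrated function so that the two natural estimates in this setting combine to give exactly the desired constant $2/\sqrt{K}$.
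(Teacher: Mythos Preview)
Your proposal is correct and lands on the exact constant $2/\sqrt{K}$, but it takes a genuinely different route from the paper. The paper never invokes Gr\"onwall or Cauchy--Schwarz; instead it bounds the integrand \emph{pointwise}. Writing $\sqrt{-F'(s)} = \dfrac{2\sqrt{F(s)}}{\sqrt{-F'(s)}}\cdot\Bigl(-\dfrac{d}{ds}\sqrt{F(s)}\Bigr)$ and using the hypothesis in the form $\sqrt{-F'(s)}\ge \sqrt{KF(s)}$ in the denominator gives $\sqrt{-F'(s)} \le -\dfrac{2}{\sqrt{K}}\,\dfrac{d}{ds}\sqrt{F(s)}$, which integrates directly to $\dfrac{2}{\sqrt{K}}\bigl(\sqrt{F(0)}-\sqrt{F(t)}\bigr)$. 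Your Cauchy--Schwarz split recovers the same bound by, in effect, distributing this pointwise inequality across two factors: the first integral $\int \frac{-F'}{\sqrt{F}}$ is exactly the telescoping part, while the second integral $\int \sqrt{F}$ absorbs the hypothesis via the Gr\"onwall bound $F(s)\le F(0)e^{-Ks}$. The paper's argument is shorter and uses the hypothesis only once (and only locally), whereas yours is a nice illustration of how a weighted Cauchy--Schwarz can substitute for a pointwise estimate; both handle the $F=0$ singularity in the same way, by restricting to the subinterval where $F>0$.
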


\section{The dynamical spectral gap of the directed hypercube}\label{sec:directed-gap-of-hypercube}

Let $H_n$ denote the unweighted directed hypercube in dimension $n$, \ie $H_n = \left(\zo^n,
w\right)$ where the weight function $w$ is as follows: for each $x, y \in \zo^n$, $w(x, y) = 1$ if
$\|x-y\|_1 = 1$ with $x \preceq y$, and $w(x, y) = 0$ otherwise. For simplicity of notation, in this
section we also let $V \define \zo^n$.

This section studies the spectral gap of $H_n$ endowed with directed Laplacian operator $\lap^- =
\lap^-_{H_n}$ and associated directed Dirichlet energy functional $\dir^-$. We show

\begin{restatable}[Dynamical spectral gap of the directed hypercube]{theorem}{thmdirectedpoincarehn}
    \label{thm:directed-poincare-hn}
    $H_n$ satisfies $\lambda^-(H_n) = 1$.
\end{restatable}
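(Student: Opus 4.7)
The plan is to establish both $\lambda^-(H_n) \leq 1$ and $\lambda^-(H_n) \geq 1$. The upper bound follows by exhibiting a test function $f$ creating exactly one antimonotone edge: take $f = \id_{\{x^*\}}$ where $x^* = (0, 1, \ldots, 1) \in \{0,1\}^n$, whose unique up-neighbor in $H_n$ is $\vec{1}$. A direct computation using \Cref{def:laplacian-general,prop:energy-violations} gives $\dir^-(f) = \|\lap^- f\|_2^2 = 2^{-(n+1)}$, yielding a ratio of $1$. The rest of the argument establishes the lower bound $\|\lap^- f\|_2^2 \geq \dir^-(f)$ for every $f \in L^2(V)$, which suffices by \Cref{def:directed-spectral-gap}.

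For the lower bound, I would decompose $\lap^- f$ by coordinate direction. For each $i \in [n]$ and $u \in \{0,1\}^n$, set
\[
a_i(u_{-i}) \define \bigl(f(u^{i \to 0}) - f(u^{i \to 1})\bigr)^+, \qquad b_i(u) \define (2u_i - 1) \, a_i(u_{-i}),
\]
so that $a_i(u_{-i})$ records the antimonotone deficit of the $i$-th cube edge at position $u_{-i}$. Unpacking \Cref{def:laplacian-general,prop:energy-violations} for the directed hypercube yields $(\lap^- f)(u) = \tfrac{1}{2}\sum_i b_i(u)$ and $\dir^-(f) = \tfrac{1}{4}\Exu{u}{\sum_i b_i(u)^2}$. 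Squaring and taking expectation in the first identity gives
\[
\|\lap^- f\|_2^2 = \dir^-(f) + \tfrac{1}{4}\sum_{i \ne j} \Exu{u}{b_i(u) b_j(u)},
\]
so it suffices to show that each cross term $\Exu{u}{b_i(u) b_j(u)}$ is non-negative.

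The heart of the proof is a $2$-face analysis. Fixing $i \ne j$ and coordinates $x = u_{-\{i,j\}}$, summing over $(u_i, u_j) \in \{0,1\}^2$ reduces the contribution of this $2$-face to $(B-A)(D-C)$, where, writing $\alpha_{u_i u_j} = f(x, u_i, u_j)$ for the four function values on the face, we set $A = (\alpha_{00}-\alpha_{10})^+$, $B = (\alpha_{01}-\alpha_{11})^+$, $C = (\alpha_{00}-\alpha_{01})^+$, $D = (\alpha_{10}-\alpha_{11})^+$. The algebraic miracle is that
\[
(\alpha_{01} - \alpha_{11}) - (\alpha_{00} - \alpha_{10}) = (\alpha_{10} - \alpha_{11}) - (\alpha_{00} - \alpha_{01}) = \alpha_{10} + \alpha_{01} - \alpha_{00} - \alpha_{11} =: \Delta,
\]
so by monotonicity of $(\cdot)^+$, $\Delta \geq 0$ implies $B \geq A$ and $D \geq C$, while $\Delta \leq 0$ implies $B \leq A$ and $D \leq C$. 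In either case $(B-A)(D-C) \geq 0$, and summing over $x$ and over pairs $(i,j)$ completes the proof. I expect the main difficulty to lie precisely in isolating this $2$-face inequality: the nonlinearity of $\lap^-$ through $(\cdot)^+$ precludes a standard spectral argument (indeed, the subgraphs of antimonotone edges can have spectral gap smaller than $1$ when viewed in isolation), and the identity funneling both sign comparisons through the single quantity $\Delta$ is what cleanly absorbs this nonlinearity.
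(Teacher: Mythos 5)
Your proposal is correct and follows essentially the same strategy as the paper: decompose $\lap^-$ into coordinate operators $\lap^{(i)}$, note that the diagonal terms $\|\lap^{(i)} f\|_2^2$ reproduce $\dir^{(i)}(f)$ exactly, and reduce the lower bound to non-negativity of the cross-terms via a two-face analysis (your upper-bound test function, the indicator of $(0,1,\dots,1)$, differs from the paper's anti-dictator $g(x)=-x_1$, but both attain ratio $1$). One genuine improvement worth flagging: where the paper disposes of the two-face inequality $\bigl[(a-b)^+ - (c-d)^+\bigr]\bigl[(a-c)^+ - (b-d)^+\bigr]\ge 0$ by sign casework in an appendix lemma, your observation that both factors are differences of $(\cdot)^+$ applied to arguments shifted by the \emph{same} quantity $\Delta = \alpha_{10}+\alpha_{01}-\alpha_{00}-\alpha_{11}$ — so that monotonicity of $(\cdot)^+$ forces both factors to share the sign of $\Delta$ — gives a cleaner, one-line proof of the same inequality.
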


Before proving the theorem, we first observe that $\lap^-$ and $\dir^-$ enjoy a useful
coordinate-wise decomposition: we write
\[
    \lap^- = \sum_{i=1}^n \lap^{(i)} \,,
\]
where each $\lap^{(i)} : L^2(V) \to L^2(V)$ is given by
\[
    (\lap^{(i)} f)(x)
    \define \frac{1}{2} \left( f(x^{\oplus i}) - f(x) \right) \ind{f(x^{i \to 0}) > f(x^{i \to 1})}
\]
for each $f \in L^2(V)$ and $x \in \zo^n$. It is straightforward to check that this decomposition
agrees with \cref{def:laplacian-general}. Similarly, from \cref{prop:energy-violations} we also
obtain the decomposition
\[
    \dir^- = \sum_{i=1}^n \dir^{(i)} \,,
\]
where each $\dir^{(i)} : L^2(V) \to \bR$ is given by
\[
    \dir^{(i)}(f) \define \frac{1}{4}\cdot \Exu{x \in \zo^n}{
        \left((f(x^{i \to 1}) - f(x^{i \to 0}))^-\right)^2}
\]
for each $f \in L^2(V)$. (The extra factor of $1/2$ compared to \cref{prop:energy-violations}
appears because the summation above counts each edge of $H_n$ twice.)

We now prove \cref{thm:directed-poincare-hn}. The upper bound is easy and attained by anti-dictator
functions, so the main point is to show the lower bound $\lambda^-(H_n) \ge 1$. To this end, we fix
any function $f$, consider the fraction in \cref{def:directed-spectral-gap}, and expand the
operators $\lap^-$ and $\dir^-$ according to the coordinate-wise decompositions above. The main
conceptual ingredient is the positive correlation $\inp{\lap^{(i)} f}{\lap^{(j)} f} \ge 0$,
which is established by an argument reminiscent of the analysis of the ``edge tester'' for
monotonicity of functions on the hypercube \cite{GGLRS00}\footnote{\cite{GGLRS00} define a
    \emph{switch operator} $S_i$ which fixes all violations of monotonicity of a Boolean function
    $f$ along direction $i$, by switching the values of $f$ along violating edges. A key lemma in
    that paper shows that the application of $S_i$ can only make the number of violations of
    monotonicity along a different direction $j$ smaller -- informally, the work along direction $i$
    ``helps'' toward the work along direction $j$. In our $L^2$ setting, this is captured by the
positive correlation $\inp{\lap^{(i)} f}{\lap^{(j)} f} \ge 0$ between the contributions of
directions $i$ and $j$ to the action of $\lap$.}\!\!.

\begin{proof}[Proof of \cref{thm:directed-poincare-hn}]
    \textbf{Upper bound.}
    Note that for the anti-dictator function $g(x) = -x_1$, the coordinate-wise decompositions give
    that $(\lap^- g)(x) = \pm \frac{1}{2}$ for every $x \in \zo^n$, while the $g$-antimonotone edges
    (which contribute to $\dir^-$) are precisely those along the first coordinate. Hence
    \[
        \frac{\|\lap^- g\|_{2}^2}{\dir^-(g)}
        = \frac{\Ex{(\pm 1/2)^2}}{\frac{1}{4} \Ex{(-1)^2}}
        = 1 \,,
    \]
    so $\lambda^-(H_n) \le 1$.

    \textbf{Lower bound.}
    Let $f \in L^2(V)$. It suffices to show that $\|\lap^- f\|_{2}^2 \ge \dir^-(f)$. Note that
    \[
        \left\|\lap^- f\right\|_{2}^2
        = \inp{\sum_{i=1}^n \lap^{(i)} f}{\sum_{i=1}^n \lap^{(i)} f}
        = \sum_{i=1}^n \left\|\lap^{(i)}f\right\|_{2}^2
            + 2\sum_{i<j} \inp{\lap^{(i)}f}{\lap^{(j)}f} ,
    \]
    and recall that $\dir^-(f) = \sum_{i=1}^n \dir^{(i)}(f)$. For each $i \in [n]$, we have
    \begin{align*}
        \left\|\lap^{(i)} f\right\|_{2}^2
        &= \Exu{x \in \zo^n}{\left((\lap^{(i)} f)(x)\right)^2}
        = \Exu{x \in \zo^n}{\left(\frac{1}{2}
                \left( f(x^{\oplus i}) - f(x) \right) \ind{f(x^{i \to 0}) > f(x^{i \to 1})}
        \right)^2} \\
        &= \frac{1}{4} \Exu{x \in \zo^n}{\left((f(x^{i \to 1}) - f(x^{i \to 0})^-)\right)^2}
        = \dir^{(i)}(f) \,.
    \end{align*}
    Therefore, we will be done if we can show that, for each $i \ne j$, it holds that
    $\inp{\lap^{(i)} f}{\lap^{(j)} f}\ge 0$:
    \begin{claim}
        \label{claim:i-j}
        For every $i, j \in [n]$ with $i \ne j$, we have
        \[
            \inp{\lap^{(i)} f}{\lap^{(j)} f} \ge 0 \,.
        \]
    \end{claim}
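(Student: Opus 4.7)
The plan is to decompose the inner product $\inp{\lap^{(i)} f}{\lap^{(j)} f}$ into contributions from the $2^{n-2}$ two-dimensional sub-cubes obtained by fixing all coordinates of $x$ outside $\{i, j\}$, and to argue that each such contribution is non-negative on its own. The decomposition is natural because each $\lap^{(k)}$ only reads edges in direction $k$: once the coordinates outside $\{i,j\}$ are fixed, the values of $(\lap^{(i)} f)(x)$ and $(\lap^{(j)} f)(x)$ at the four vertices of the sub-cube depend only on the four values of $f$ on that sub-cube.

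Fix $y \in \zo^{[n]\setminus\{i,j\}}$ and write $a, b, c, d$ for the values of $f$ at the four vertices with $(x_i, x_j) = (0,0), (0,1), (1,0), (1,1)$ respectively. Introduce the four non-negative ``violation magnitudes'' $\alpha \define (a-c)^+$, $\beta \define (b-d)^+$ in direction $i$, and $\gamma \define (a-b)^+$, $\delta \define (c-d)^+$ in direction $j$. A direct evaluation using \cref{def:laplacian-general} shows that $\lap^{(i)} f$ takes the values $-\alpha/2, -\beta/2, \alpha/2, \beta/2$ at the four vertices (in the order above), and $\lap^{(j)} f$ takes the values $-\gamma/2, \gamma/2, -\delta/2, \delta/2$ respectively (this uses the handy observation that the two indicators in the definition of $\lap^{(i)}$ both collapse to the positive part, so the signs align with $\alpha, \beta$ automatically). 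Multiplying these vertex by vertex and summing, the contribution of this sub-cube to the inner product is
\[
\tfrac{1}{4}\left(\alpha\gamma - \beta\gamma - \alpha\delta + \beta\delta\right) = \tfrac{1}{4}(\alpha - \beta)(\gamma - \delta).
\]

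It therefore suffices to verify $(\alpha - \beta)(\gamma - \delta) \ge 0$ for every sub-cube. By the symmetry of the statement under swapping $(i,j)$, it is enough to check that $\alpha > \beta$ implies $\gamma \ge \delta$. From $\alpha > 0$ we get $a > c$. If $c \le d$, then $\delta = 0$ and the conclusion is trivial. Otherwise $c > d$, and a short subcase split on whether $b > d$ or $b \le d$ shows that the hypothesis $\alpha > \beta$ rearranges to $a - b > c - d > 0$; in particular $a > b$, so $\gamma = a - b > c - d = \delta$. Summing the non-negative sub-cube contributions across all $y$ yields the claim and hence $\lambda^-(H_n) \ge 1$.

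The only real obstacle is spotting the 2D identity that the sub-cube contribution factors as $\tfrac{1}{4}(\alpha - \beta)(\gamma - \delta)$; once this is in hand, the rest is bookkeeping. Conceptually, this identity is a real-valued $L^2$ analogue of the GGLRS ``edge-tester'' / switching argument, which says that the work of fixing a direction-$i$ violation cannot create new direction-$j$ violations -- here it manifests as the positive correlation of the coordinate-wise Laplacian components.
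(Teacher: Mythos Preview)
Your proof is correct and follows essentially the same approach as the paper: both reduce to the $2^{n-2}$ two-dimensional sub-cubes, compute the contribution of each sub-cube as a product of two differences of positive parts, and then verify that this product is non-negative by a short case analysis (which the paper packages separately as \cref{lemma:abcd}). The only differences are cosmetic---your shorthand $\alpha,\beta,\gamma,\delta$ for the positive parts, and a swapped labeling of the vertices $b$ and $c$ relative to the paper's convention.
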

    We prove the claim below, but first we use it to conclude the proof of the theorem. Putting the
    above together, we obtain
    \[
        \left\|\lap^- f\right\|_{2}^2
        = \sum_{i=1}^n \left\|\lap^{(i)}f\right\|_{2}^2 + 2\sum_{i<j} \inp{\lap^{(i)}f}{\lap^{(j)}f}
        \ge \sum_{i=1}^n \dir^{(i)}(f)
        = \dir^-(f) \,.
    \]
    Since $f$ was arbitrary, we conclude that $\lambda^-(H_n) \ge 1$.
\end{proof}

\begin{proof}[Proof of \cref{claim:i-j}]
    Suppose without loss of generality that $i=1$ and $j=2$. We first observe that it suffices to
    consider each ``square'' obtained by fixing all but the first two coordinates, since the inner
    product decomposes along these squares. Concretely, for each $y \in \zo^{n-2}$, let $g_y:
    \zo^2\rightarrow\bR$ be given by $g_y(x) \define f(x, y)$ for each $x \in \zo^2$, where we write $f(x,
    y)$ for the value of $f$ at the input obtained by concatenating $x$ and $y$. Then
    \begin{align*}
        \inp{\lap^{(1)} f}{\lap^{(2)} f}
        &= \frac{1}{2^n} \sum_{z \in \zo^n}
            \left((\lap^{(1)} f)(z)\right) \left((\lap^{(2)} f)(z)\right) \\
        &= \frac{1}{2^n} \sum_{x \in \zo^2} \sum_{y \in \zo^{n-2}}
            \Bigg[\frac{1}{2}\left( f(x^{\oplus 1}, y) - f(x,y) \right)
                \ind{f(x^{1 \to 0}, y) > f(x^{1 \to 1}, y)}
            \\
        & \qquad\qquad\qquad\qquad\qquad\cdot \frac{1}{2} \left( f(x^{\oplus 2}, y) - f(x,y) \right)
                \ind{f(x^{2 \to 0}, y) > f(x^{2 \to 1}, y)}\Bigg]\\
        &= \frac{1}{2^n}
            \sum_{y \in \zo^{n-2}} \sum_{x\in\{0,1\}^{2}}\lap^{(1)} g_y(x)\cdot \lap^{(2)} g_y(x)
            \,.
    \end{align*}
    We will show that for any $g:\{0,1\}^{2}\rightarrow\bR$, the sum $\sum_{x\in\zo^{2}}\lap^{(1)} g(x)\cdot\lap^{(2)} g(x)
    $ is nonnegative, which will complete the proof. Define
    \begin{align*}
        c &\define g(0, 1) \,, & d &\define g(1, 1) \,, \\
        a &\define g(0, 0) \,, & b &\define g(1, 0) \,.
    \end{align*}
    Then we have
    \begin{align*}
        &\quad \sum_{x\in\{0,1\}^{2}}\lap^{(1)} g(x)\cdot \lap^{(2)} g(x)\\
        &= \sum_{x \in \zo^2}
            \left( g(x^{\oplus 1}) - g(x) \right) \ind{g(x^{1 \to 0}) > g(x^{1 \to 1})}
            \left( g(x^{\oplus 2}) - g(x) \right) \ind{g(x^{2 \to 0}) > g(x^{2 \to 1})} \\
        &= (b-a)\ind{a>b}(c-a)\ind{a>c}
                + (a-b)\ind{a>b}(d-b)\ind{b>d} \\
            &\qquad \qquad
                + (d-c)\ind{c>d}(a-c)\ind{a>c}
                + (c-d)\ind{c>d}(b-d)\ind{b>d} \\
        &= (a-b)^+(a-c)^+
                - (a-b)^+(b-d)^+
                - (c-d)^+(a-c)^+
                + (c-d)^+(b-d)^+ \\
        &= \left[(a-b)^+ - (c-d)^+\right] \left[ (a-c)^+ - (b-d)^+ \right] \ge 0 \,,
    \end{align*}
    where the last inequality is proved in \cref{lemma:abcd}.
\end{proof}

Combining \cref{thm:directed-poincare-hn,thm:directed-poincare-spectral}, we conclude

\begin{corollary}[Directed Poincaré inequality for the hypercube; refinement of \cref{thm:directed}]
    \label{cor:directed-poincare}
    For all $f \in L^2(V)$, it holds that
    \[
        \dist^\mono_2(f)^2
        \le \|f - P_\infty f\|_2^2
        \le \frac{1}{\lambda^-(H_n)} \, \dir^-(f)
        = \dir^-(f) \,.
    \]
\end{corollary}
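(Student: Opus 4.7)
The corollary is essentially an assembly of three results already established in this section, so the plan is to verify that the three (in)equalities chain together correctly rather than to do any new work. I would read the claim from right to left.

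First, the equality $\tfrac{1}{\lambda^-(H_n)}\,\dir^-(f) = \dir^-(f)$ is \Cref{thm:directed-poincare-hn} applied verbatim, since that theorem establishes $\lambda^-(H_n) = 1$.

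Next, for the middle inequality $\|f - P_\infty f\|_2^2 \le \tfrac{1}{\lambda^-(H_n)}\,\dir^-(f)$, I would specialize \Cref{thm:directed-poincare-spectral} to $G = H_n$. No additional argument is required there, because that Poincaré bound was proved for every weighted directed graph; the graph-specific input (\ie the value of $\lambda^-$) is precisely the content of \Cref{thm:directed-poincare-hn}.

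Finally, for the leftmost inequality $\dist^\mono_2(f)^2 \le \|f - P_\infty f\|_2^2$, I would appeal to \Cref{cor:convergence}, which guarantees that $P_\infty f$ is a monotone function on the directed graph $H_n$. One small observation is needed: ``monotone on $H_n$''---\ie non-decreasing along every directed edge---coincides with ``monotone increasing on the hypercube poset $\preceq$,'' because the edges of $H_n$ are by construction exactly the pairs $(x, x^{\oplus i})$ with $x_i = 0$, and any chain $x \preceq y$ in $\preceq$ can be decomposed into such edges. Hence $P_\infty f$ is an admissible competitor in the infimum defining $\dist^\mono_2(f)$, and the inequality follows once one notes that our $\|\cdot\|_2$ is taken with respect to the uniform measure on $V = \zo^n$, which matches the measure used in the definition of $\dist^\mono_2$. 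The argument poses no real obstacle; the only step of any substance is the identification between graph-monotone and poset-monotone functions, and this is immediate from how $H_n$ was defined at the start of the section.
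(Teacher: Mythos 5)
Your proposal is correct and matches the paper's own (implicit) proof, which simply invokes \cref{thm:directed-poincare-spectral} and \cref{thm:directed-poincare-hn}. The only addition is your explicit remark that monotonicity with respect to the directed edges of $H_n$ coincides with monotonicity in the hypercube poset so that $P_\infty f$ is an admissible competitor in the infimum defining $\dist^\mono_2(f)$; this is a small but worthwhile point that the paper leaves unstated.
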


\section{Necessity of approximate FKG}\label{sec:reverse}

The goal of this section is to prove \Cref{thm:spectral-gap-to-FKG}, a partial converse of \Cref{thm:FKG-to-spectral-gap}, which roughly states that for a monotone set $A\subseteq\zo^{n}$ to exhibit good spectral expansion, the approximate FKG ratio $\delta(A)$ must not be too close to $-1$. Analogous to the proof of \Cref{thm:FKG-to-spectral-gap}, the first step is to establish a reverse version of the directed Poincaré inequality, which we do in \Cref{subsec:reverse-directed-Poincare}. We then apply the argument from \Cref{sec:directed-to-undirected} in the reverse direction, as carried out in \Cref{subsec:sec-3-reverse}.

\subsection{Reverse directed Poincaré inequality}\label{subsec:reverse-directed-Poincare}
We give a reverse directed Poincaré inequality which establishes the tightness of
\cref{thm:directed} up to a factor of $n$. In fact, for a general directed graph $G$, the reverse
directed Poincaré inequality we show is mediated by the maximum total weighted degree of $G$.

Let $d_w(G)$ denote the maximum total weighted degree of any vertex in $G$:
\[
    d_w(G) \define \max_{u \in V} \left\{ \sum_{v \in V} (w(u,v) + w(v,u)) \right\} \,.
\]
We then show the following.

\begin{theorem}[Reverse directed Poincaré inequality]
    For all $f \in L^2(V)$ and all monotone $g \in L^2(V)$, we have
    \[
        \|f-g\|_{2}^2 \ge \frac{1}{d_w(G)} \, \dir^-(f) \,.
    \]
\end{theorem}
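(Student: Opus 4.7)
The plan is to bound $\dir^-(f)$ directly by $d_w(G) \cdot \|f-g\|_2^2$ via a short chain of pointwise inequalities, exploiting the monotonicity of $g$ to localize the differences along $f$-antimonotone edges. The proof will be elementary; the only subtle point is choosing the right splitting inequality so that the constants fall out exactly.

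First I would set $h \define f - g$, so that $\|f-g\|_2^2 = \|h\|_2^2$. The key observation is the pointwise bound
\[
    \bigl((f(u)-f(v))^+\bigr)^2 \le \bigl((h(u)-h(v))^+\bigr)^2
\]
at every directed edge $(u,v)$ with $w(u,v) > 0$. Indeed, the monotonicity of $g$ gives $g(u) \le g(v)$, so $h(u) - h(v) = (f(u)-f(v)) - (g(u)-g(v)) \ge f(u) - f(v)$; if $f(u) > f(v)$ this yields $(f(u)-f(v))^+ \le (h(u)-h(v))^+$, while if $f(u) \le f(v)$ the inequality is trivial. Together with \Cref{prop:energy-violations}, this gives
\[
    \dir^-(f) \le \frac{1}{2|V|} \sum_{u,v \in V} w(u,v) \bigl((h(u)-h(v))^+\bigr)^2.
\]

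Next I would apply the elementary inequality $(a-b)^+ \le a^+ + b^-$ where $b^- \define (-b)^+$, verified by a short case analysis on the signs of $a,b$. Combined with $(x+y)^2 \le 2(x^2+y^2)$, this gives $\bigl((h(u)-h(v))^+\bigr)^2 \le 2\bigl((h(u)^+)^2 + (h(v)^-)^2\bigr)$. Plugging in and splitting the double sum,
\[
    \dir^-(f) \le \frac{1}{|V|} \sum_{u \in V} (h(u)^+)^2 \Bigl(\sum_{v \in V} w(u,v)\Bigr) + \frac{1}{|V|} \sum_{v \in V} (h(v)^-)^2 \Bigl(\sum_{u \in V} w(u,v)\Bigr).
\]
Since the weighted out-degree $\sum_v w(u,v)$ and in-degree $\sum_u w(u,v)$ at every vertex are each at most $d_w(G)$ (their sum at any common vertex being bounded by $d_w(G)$), and since $(h(u)^+)^2 + (h(u)^-)^2 = h(u)^2$, the right-hand side is at most
\[
    \frac{d_w(G)}{|V|} \sum_{u \in V} h(u)^2 = d_w(G) \|h\|_2^2 = d_w(G) \|f-g\|_2^2,
\]
which rearranges to the claim.

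I do not anticipate a serious obstacle; the entire argument is a short chain of pointwise manipulations. The main point worth highlighting is to use the refinement $(a-b)^+ \le a^+ + b^-$ rather than the loose triangle bound $|a-b| \le |a|+|b|$: this ensures that the positive part at $u$ contributes only to the out-degree sum and the negative part at $v$ only to the in-degree sum, so that the factor of $2$ picked up from $(x+y)^2 \le 2(x^2+y^2)$ is cancelled exactly by the $1/2$ in the definition of $\dir^-$, producing the constant $1/d_w(G)$ without a spurious factor.
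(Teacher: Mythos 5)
Your proof is correct and takes essentially the same route as the paper's. The paper sets $\Delta \define |f-g|$, uses the triangle inequality $\Delta(u)+\Delta(v)\ge f(u)-f(v)$ for $f$-antimonotone edges, applies $(x+y)^2\le 2(x^2+y^2)$, and then combines the out- and in-degree sums into the total degree $\le d_w(G)$. You instead set $h\define f-g$, bound $(f(u)-f(v))^+\le (h(u)-h(v))^+$ via monotonicity of $g$, refine with $(a-b)^+\le a^+ + b^-$, and recover $h(u)^2=(h(u)^+)^2+(h(u)^-)^2$ at the end. Both are correct and yield the same constant.

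One remark in your closing paragraph is mistaken, though: the refinement $(a-b)^+\le a^+ + b^-$ is \emph{not} what prevents a spurious factor of $2$. The looser bound $|h(u)-h(v)|\le |h(u)|+|h(v)|$ works equally well, because after applying $(x+y)^2\le 2(x^2+y^2)$ and swapping summation indices, the out-degree and in-degree sums recombine into $\sum_v \bigl(w(u,v)+w(v,u)\bigr)\le d_w(G)$ at each vertex $u$ --- which is precisely what the paper does with $\Delta(u)^2$. The factor-of-$2$ cancellation against the $\tfrac12$ in $\dir^-$ happens regardless of whether you split into positive/negative parts. Your sign-splitting is a valid alternative accounting, but it buys nothing over the simpler $\Delta = |f-g|$ version.
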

\begin{proof}
    Let $\Delta \in L^2(V)$ be given by $\Delta(u) \define \abs*{f(u) - g(u)}$ for each $u \in V$,
    so that $\|f-g\|_{2}^2 = \|\Delta\|_{2}^2$. For each $f$-antimonotone edge $(u,v)$ of
    $G$, we have $f(u) > f(v)$ while $g(u) \le g(v)$, which implies via the triangle inequality that
    \begin{equation}
        \label{eq:Delta}
        \Delta(u) + \Delta(v) \ge f(u) - f(v) \,.
    \end{equation}
    Using \cref{prop:energy-violations} and summing \eqref{eq:Delta} up over all $f$-antimonotone
    edges, we conclude that
    \begin{align*}
        \dir^-(f)
        &= \frac{1}{2} \Exu{u \in V}{
            \sum_{v \in V} w(u,v) \left(\left(f(u)-f(v)\right)^+\right)^2} \\
        &= \frac{1}{2} \Exu{u \in V}{
            \sum_{v \in V : f(u) > f(v)} w(u,v) \left(f(u)-f(v)\right)^2} \\
        &\le \frac{1}{2} \Exu{u \in V}{
            \sum_{v \in V : f(u) > f(v)} w(u,v) \left(\Delta(u)+\Delta(v)\right)^2} \\
        &\le \Exu{u \in V}{
            \sum_{v \in V : f(u) > f(v)} w(u,v) \left(\Delta(u)^2 + \Delta(v)^2\right)} \\
        &\le \Exu{u \in V}{\Delta(u)^2 \left(\sum_{v \in V}(w(u,v) + w(v,u))\right)} \\
        &\le d_w\cdot \|\Delta\|_{2}^2 \,. \qedhere
    \end{align*}
\end{proof}


The directed unweighted hypercube $H_n$ has $d_w(H_n) = n$. Hence the theorem above implies

\begin{corollary}\label{cor:reverse-directed-Poincare-cube}
    Fix graph $H_n$, the directed unweighted hypercube on vertex set $\zo^n$. Then for each $f \in
    L^2(\zo^n)$ we have
    \[
        \dist^\mono_2(f)^2 \ge \frac{1}{n} \, \cE^-(f) \,.
    \]
\end{corollary}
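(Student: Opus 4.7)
The plan is to specialize the general reverse directed Poincaré inequality just proved to the case of the directed unweighted hypercube $H_n$, so the proof reduces to two routine verifications: computing $d_w(H_n)$, and identifying the general energy functional $\dir^-(f)$ of \Cref{sec:spectral-directed} with the paper's $\cE^-(f)$ from \Cref{def:upward-boundary}. After these identifications, the desired bound will follow by taking an infimum over monotone functions on the left-hand side.

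First I would check that $d_w(H_n) = n$. Each vertex $x \in \zo^n$ has exactly $|\{i : x_i = 0\}|$ outgoing edges of weight $1$ (one for each coordinate where $x$ can be flipped upward) and exactly $|\{i : x_i = 1\}|$ incoming edges of weight $1$, so its total weighted degree is exactly $n$; taking the maximum over $x$ gives $d_w(H_n) = n$.

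Next I would show $\dir^-(f) = \cE^-(f)$. Starting from $\dir^-(f) = \tfrac{1}{2} \Exu{x \in \zo^n}{\sum_{y} w(x,y) \bigl((f(x)-f(y))^+\bigr)^2}$ (\Cref{prop:energy-violations}) and plugging in the edge weights of $H_n$, the inner sum is over coordinates $i$ with $x_i = 0$, and for such $i$ one has $(f(x) - f(x^{\oplus i}))^+ = -\min\{0, f(x^{i\to 1}) - f(x^{i\to 0})\}$, whose square matches the summand of $\cE^-(f)$. The key observation is that this squared term depends only on the coordinates of $x$ other than $i$, so averaging over $x$ restricted to $x_i = 0$ gives half of the average over all $x \in \zo^n$. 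Combining this factor of $1/2$ with the $1/2$ in front of $\dir^-$ yields the matching factor of $1/4$ in front of $\cE^-$, so $\dir^-(f) = \cE^-(f)$ holds.

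With these two identifications, the reverse directed Poincaré inequality specializes to the statement that $\|f - g\|_2^2 \ge \tfrac{1}{n} \cE^-(f)$ for every monotone $g : \zo^n \to \bR$. Taking the infimum over all such $g$ on the left-hand side (the right-hand side does not involve $g$) gives $\dist^\mono_2(f)^2 \ge \tfrac{1}{n} \cE^-(f)$, as desired. No step here is conceptually difficult: all the real work is done by the general reverse Poincaré inequality proved above, and the present corollary is a clean specialization once one pins down that the graph-theoretic $\dir^-$ on $H_n$ coincides with the analytic $\cE^-$ used throughout the paper.
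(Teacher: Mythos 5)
Your proposal is correct and follows the same route as the paper: apply the general reverse directed Poincaré inequality, compute $d_w(H_n)=n$, identify $\dir^-$ with $\cE^-$ on the hypercube (which the paper establishes implicitly via the coordinate-wise decomposition $\dir^- = \sum_i \dir^{(i)}$ in \cref{sec:directed-gap-of-hypercube}), and take the infimum over monotone $g$. The only difference is that you spell out the bookkeeping for $\dir^-(f)=\cE^-(f)$, which the paper leaves as a remark about the factor of $1/2$ from double-counting edges.
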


\subsection{Proof of \Cref{thm:spectral-gap-to-FKG}}\label{subsec:sec-3-reverse}

Having proved the reverse directed Poincar\'{e} inequality, we may now apply the argument from \Cref{sec:directed-to-undirected} to prove \Cref{thm:spectral-gap-to-FKG}.

First, recall that we defined an extension operator $T:L^{2}(A)\rightarrow L^{2}(\zo^{n})$ in \Cref{def:domain-extension}. We will need the following easy proposition which is complementary to \Cref{prop:dist-to-mono-under-extension}.

\begin{proposition}
Let $A\subseteq \zo^{n}$ be a monotone set with at least 2 elements. For every function $f:A\rightarrow\bR$ and any monotone increasing function $g:A\rightarrow \bR$, we have
$$\mu(A)^{-1/2}\cdot\dist_{2}^{\mono}(T[f])\leq \left\|f-g\right\|_{2}$$ where the $L^{2}$-norm is the norm in the inner product space $L^{2}(A)$.
\end{proposition}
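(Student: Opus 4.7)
The plan is to exhibit an explicit monotone extension $\tilde g$ of (a truncation of) $g$ to the whole hypercube, and then conclude via $\dist_2^{\mono}(T[f]) \le \|T[f] - \tilde g\|_{L^2(\zo^n)}$, which holds by definition of $\dist_2^{\mono}$. Concretely, letting $m := \min_{y \in A} f(y)$ so that $T[f]$ equals $m$ on $\zo^n \setminus A$, I will set
\[
\tilde g(x) := \begin{cases} \max\{g(x), m\} & x \in A, \\ m & x \notin A. \end{cases}
\]

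First I would verify monotonicity of $\tilde g$ on all of $\zo^n$, which follows from three easy observations: the pointwise maximum with a constant preserves monotonicity of $g$ inside $A$; the monotonicity of $A$ rules out the case $x \in A$, $y \notin A$ with $x \preceq y$; and for comparable $x \preceq y$ with $x \notin A$ and $y \in A$ we have $\tilde g(x) = m \le \max\{g(y), m\} = \tilde g(y)$, while on $\zo^n \setminus A$ the function is constant. Next, since $T[f]$ and $\tilde g$ agree on $\zo^n \setminus A$, we get
\[
\|T[f] - \tilde g\|_{L^2(\zo^n)}^2 = \mu(A)\cdot \Exu{x \in A}{\bigl(f(x) - \max\{g(x),m\}\bigr)^2},
\]
and the desired bound follows from the pointwise inequality $|f(x) - \max\{g(x), m\}| \le |f(x) - g(x)|$ valid on $A$: when $g(x) \ge m$ both sides are equal, and when $g(x) < m \le f(x)$ (using $m = \min_A f$), truncating $g$ upward to $m$ only shrinks the distance to $f(x)$.

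The only subtlety — and hence the main (minor) obstacle — is the choice of extension across the boundary of $A$. The naive attempt of taking $\tilde g = g$ on $A$ and $\tilde g = m$ on the complement can fail to be monotone if $g$ dips below $m$ somewhere in $A$, since then a point $x \notin A$ comparable to such a $y \in A$ would have $\tilde g(x) = m > g(y) = \tilde g(y)$. Truncating $g$ from below at $m$ precisely repairs this defect, and since this truncation only moves $\tilde g$ closer to $T[f]$ on $A$ (rather than farther away) it costs nothing in the $L^2$ estimate, yielding the proposition.
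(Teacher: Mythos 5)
Your proof is correct and is essentially identical to the paper's own argument: both construct the same extension $\tilde g = \max\{g, \min_A f\}$ on $A$ (and $\min_A f$ off $A$), verify its monotonicity, observe that $\tilde g$ and $T[f]$ coincide off $A$, and then use the pointwise fact that the upward truncation of $g$ only moves it closer to $T[f]$ on $A$. No discrepancies.
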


\begin{proof}
Consider a function $\widetilde{g}:\zo^{n}\rightarrow\bR$ defined by
$$\widetilde{g}(x)=\begin{cases}
\max\{g(x),\min_{y\in A}f(y)\},&\text{if }x\in A,\\
\min_{y\in A}f(y),&\text{if }x\not\in A.
\end{cases}$$
By \Cref{def:domain-extension} we know that $T[f](x)=\widetilde{g}(x)$ for $x\in \zo^{n}\setminus A$. Furthermore, since $g:A\rightarrow\bR$ is increasing, it is easy to see that $\tilde{g}:\zo^{n}\rightarrow\bR$ is also increasing. Therefore we have
\begin{align*}
\dist_{2}^{\mono}(T[f])&\leq \sqrt{\Exu{x\in \zo^{n}}{\big(T[f](x)-\widetilde{g}(x)\big)^{2}}}=\sqrt{\mu(A)\cdot \Exu{x\in A}{\big(T[f](x)-\widetilde{g}(x)\big)^{2}}}\\
&\leq\sqrt{\mu(A)\cdot \Exu{x\in A}{\big(f(x)-g(x)\big)^{2}}}=\mu(A)^{1/2}\cdot\left\|f-g\right\|_{2}.\qedhere
\end{align*}
\end{proof}
By \Cref{def:tau}, we immediately have the following corollary.

\begin{corollary}\label{cor:dist-to-mono-under-extension-reverse}
Let $A\subseteq \zo^{n}$ be a monotone set with at least 2 elements. For every function $f:A\rightarrow\bR$ and any monotone increasing function $g:A\rightarrow \bR$, we have
$$\mu(A)^{-1/2}\cdot\dist_{2}^{\mono}(T[f])\leq \tau(f,g).$$
\end{corollary}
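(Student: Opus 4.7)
The plan is to derive this corollary directly from the preceding proposition (the reverse-direction bound $\mu(A)^{-1/2}\cdot\dist_{2}^{\mono}(T[f])\leq \|f-g\|_{2}$) by exploiting the invariance of the class of monotone increasing functions under non-negative affine transformations. Concretely, the key observation is that if $g:A\to\bR$ is monotone increasing, then for every $a\in\bR_{\ge 0}$ and $b\in\bR$, the function $ag+b:A\to\bR$ is also monotone increasing.

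Given this observation, the strategy is to apply the preceding proposition not just to $g$ itself, but to every function of the form $ag+b$ with $a\ge 0$ and $b\in\bR$. Each application yields
\[
\mu(A)^{-1/2}\cdot\dist_{2}^{\mono}(T[f])\leq \left\|f-(ag+b)\right\|_{2},
\]
where the left-hand side is independent of $a$ and $b$. Taking the infimum of the right-hand side over all $a\in\bR_{\ge 0}$ and $b\in\bR$ therefore preserves the inequality, and by \Cref{def:tau} this infimum is precisely $\tau(f,g)$. This gives the claimed bound.

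There is essentially no obstacle here: the argument is purely definitional once one notices the closure of monotone increasing functions under the transformation $g\mapsto ag+b$ for $a\ge 0$. The only thing worth highlighting explicitly in the write-up is why the constraint $a\ge 0$ (rather than $a\in\bR$) in \Cref{def:tau} is important, namely that negative scaling would flip monotonicity and invalidate the use of the previous proposition. No new technical machinery beyond the preceding proposition is required.
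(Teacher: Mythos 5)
Your proposal is correct and is precisely the argument the paper intends: the paper simply states that the corollary follows immediately from the preceding proposition together with \cref{def:tau}, and the content of that implication is exactly what you spell out — since $ag+b$ remains monotone increasing for $a\ge 0$, $b\in\bR$, one applies the proposition to each $ag+b$ and takes the infimum. No gap, and no divergence from the paper's approach.
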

We are now ready to prove \Cref{thm:spectral-gap-to-FKG}.

\begin{proof}[Proof of \Cref{thm:spectral-gap-to-FKG}]
We divide into two cases.

\textbf{Case 1:} $
\delta(A)=0$. We pick an arbitrary monotone increasing function $g:A\rightarrow\bR$, and we claim that $n\cdot \Vars{A}{g}\geq \cE_{A}(g)$, certifying \Cref{thm:spectral-gap-to-FKG}. In fact, we have
\begin{align}
\quad\cE_{A}(g)&=\mu(A)^{-1}\cdot\cE^{-}(T[g])+\mu(A)^{-1}\cdot\cE^{-}(T[-g])&\text{(by \Cref{prop:Undirected-A-to-directed-full})}\nonumber\\
&\leq \frac{n}{\mu(A)}\cdot\dist_{2}^{\mono}(T[g])^{2}+\frac{n}{\mu(A)}\cdot\dist_{2}^{\mono}(T[-g])^{2}&\text{(by \Cref{cor:reverse-directed-Poincare-cube})}\nonumber\\
&\leq n\cdot\tau(g,g)^2+n\cdot \tau(-g,g)^{2} &\text{(by \Cref{cor:dist-to-mono-under-extension-reverse})}\nonumber\\
&=0+n\cdot \Vars{A}{g}=n\cdot\Vars{A}{g} &\text{(by \Cref{prop:tau-to-variance}).}\label{eq:reverse-everything}
\end{align}

\textbf{Case 2:} $\delta(A)<0$. Since the set of all monotone increasing functions $g:A\rightarrow\bR$ such that $\Vars{A}{g}=1$ is compact in the Euclidean space $\bR^{A}$, there exist functions $g,h$ in this set such that $\Covs{A}{g}{h}$ attains its minimum value, subject to $g$ and $h$ being monotone increasing and having variance 1. By \Cref{def:FKG-ratio}, it is easily seen that this minimum value equals $\delta(A)$.

We then consider the function $g-h$. Since $\Covs{A}{g}{h}=\delta(A)<0$, it is easy to see that $g-h$ is a non-constant function. We claim that the function $g-h$ certifies \Cref{thm:spectral-gap-to-FKG}, i.e.  
\begin{equation}\label{eq:reverse-goal}
(1+\delta(A))\cdot n\cdot \Vars{A}{g-h}\geq \cE_{A}(g-h).
\end{equation}
In fact, we have
\begin{align*}
&\quad\cE_{A}(g-h)=\mu(A)^{-1}\cdot\cE^{-}(T[g-h])+\mu(A)^{-1}\cdot\cE^{-}(T[h-g])&\text{(by \Cref{prop:Undirected-A-to-directed-full})}\\
&\leq \frac{n}{\mu(A)}\cdot\dist_{2}^{\mono}(T[g-h])^{2}+\frac{n}{\mu(A)}\cdot\dist_{2}^{\mono}(T[h-g])^{2}&\text{(by \Cref{cor:reverse-directed-Poincare-cube})}\\
&\leq n\cdot \tau(g-h,g)^{2}+n\cdot \tau(h-g,h)^{2} &\text{(by \Cref{cor:dist-to-mono-under-extension-reverse})}\\
&=n\cdot \left(2-\max\{0,\rho(g-h,g)\}^{2}-\max\{0,\rho(h-g,h)\}^{2}\right)\cdot \Vars{A}{g-h}&\text{(by \Cref{prop:tau-to-variance}).}
\end{align*}
We then calculate $\rho(g-h,g)$ by \Cref{def:rho}. Since $\Vars{A}{g}=\Vars{A}{h}=1$ and $\Covs{A}{g}{h}=\delta(A)$, we have
$$\rho(g-h,h)=\frac{\Covs{A}{g-h}{g}}{\sqrt{\Vars{A}{g-h}}}=\frac{1-\delta(A)}{\sqrt{2-2\delta(A)}}=\sqrt{\frac{1-\delta(A)}{2}}\geq 0.$$
Similarly, we have $\rho(h-g)=\sqrt{(1-\delta(A))/2}\geq 0$. Plugging the values of $\rho(g-h,g)$ and $\rho(h-g,h)$ into \eqref{eq:reverse-everything} yields the desired inequality \eqref{eq:reverse-goal}.
\end{proof}

\iftoggle{anonymous}{}{%
\section*{Acknowledgments}

We thank Lap Chi Lau for many helpful insights and discussions on spectral theory for directed
graphs, and Esty Kelman for early discussions in the development of this project. Renato Ferreira
Pinto Jr.\ is supported by an NSERC Canada Graduate Scholarship.
}

\printbibliography

\appendix

\section{Technical lemmas}
\label{appendix:lemmas}

\lemmaexponentialdecayintegralsqrt*
\begin{proof}
    We first observe that we may assume that $F(s) > 0$ for all $s \in (0, t)$. Indeed, if this is
    not the case, then let $s^* \define \inf\{s \in (0,t) : F(s) = 0\}$. By continuity of $F$, we
    obtain that $F(s^*) = 0$; then, since $F \ge 0$ and $F' \le 0$, we conclude that $F(s) = F'(s) =
    0$ for all $s \in [s^*, t]$. Thus it suffices to prove the claim for the interval $[0, s^*]$,
    and indeed $F(s) > 0$ for all $s \in (0, s^*)$. Thus assume without loss of generality that
    $F(s) > 0$ for all $s \in (0, t)$.

    By the chain rule, the function $s \mapsto \sqrt{F(s)}$ is differentiable with
    $\odv*{\sqrt{F(s)}}{s} = \frac{F'(s)}{2\sqrt{F(s)}} \le 0$ for all $s \in (0,t)$, and thus
    \begin{align*}
        \sqrt{-F'(s)}
        &= -\frac{2\sqrt{F(s)}}{\sqrt{-F'(s)}} \frac{F'(s)}{2\sqrt{F(s)}}
        = \frac{2\sqrt{F(s)}}{\sqrt{-F'(s)}} \left(-\odv*{\sqrt{F(s)}}{s}\right) \\
        &\le \frac{2\sqrt{F(s)}}{\sqrt{K F(s)}} \left(-\odv*{\sqrt{F(s)}}{s}\right)
        = -\frac{2}{\sqrt{K}} \odv*{\sqrt{F(s)}}{s} \,.
    \end{align*}
    Therefore
    \[
        \int_0^t \sqrt{-F'(s)} \odif s
        \le -\frac{2}{\sqrt{K}} \int_0^t \left(\odv*{\sqrt{F(s)}}{s}\right) \odif s
        = \frac{2}{\sqrt{K}} \left(\sqrt{F(0)} - \sqrt{F(t)}\right)
        \le \frac{2}{\sqrt{K}} \sqrt{F(0)} \,. \qedhere
    \]
\end{proof}

\begin{lemma}
    \label{lemma:abcd}
    For any $a, b, c, d \in \bR$, we have
    \[
        \left[(a-b)^+ - (c-d)^+\right] \left[ (a-c)^+ - (b-d)^+ \right]\geq 0\,.
    \]
\end{lemma}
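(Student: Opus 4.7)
The plan is to exploit the algebraic identity $(a-b) - (c-d) = (a-c) - (b-d)$, which rearranges the same four quantities differently but gives the same difference. Set $\alpha \define a-b$, $\beta \define c-d$, $\gamma \define a-c$, $\delta \define b-d$. Then a direct expansion shows $\alpha - \beta = a-b-c+d = \gamma - \delta$, so the two pairs $(\alpha,\beta)$ and $(\gamma,\delta)$ have the same difference, and in particular $\alpha \ge \beta$ iff $\gamma \ge \delta$ (and $\alpha \le \beta$ iff $\gamma \le \delta$).

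Next, the key observation is that the map $x \mapsto x^+$ is monotone non-decreasing. Consequently, for any two reals $u, v$, the quantity $u^+ - v^+$ has the same sign as $u - v$ in the weak sense that $(u^+ - v^+)$ and $(u-v)$ cannot have opposite strict signs: if $u \ge v$ then $u^+ \ge v^+$, and if $u \le v$ then $u^+ \le v^+$. Applying this observation to the pair $(\alpha,\beta)$ shows that $\alpha^+ - \beta^+$ has the same weak sign as $\alpha - \beta$, and similarly $\gamma^+ - \delta^+$ has the same weak sign as $\gamma - \delta$. Since $\alpha - \beta = \gamma - \delta$, these two weak signs agree, so $(\alpha^+ - \beta^+)(\gamma^+ - \delta^+) \ge 0$, which is precisely the desired inequality.

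There is no real obstacle here — the lemma is an elementary consequence of the monotonicity of the ReLU function combined with the identity $(a-b)-(c-d)=(a-c)-(b-d)$. The only step that requires any care is articulating why ``$u^+ - v^+$ and $u - v$ have the same weak sign''; I would make this precise by splitting into the cases $u \ge v$ and $u \le v$ and invoking monotonicity, or by noting more directly that $u^+ - v^+ \in [0, u-v]$ when $u \ge v$ (and symmetrically in the other case). A brief case check could also be used as a fallback, but the structural argument above avoids any enumeration of sign patterns.
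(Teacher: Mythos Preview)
Your proof is correct and, in fact, cleaner than the paper's. Both arguments ultimately rest on the identity $(a-b)-(c-d)=(a-c)-(b-d)$ together with the monotonicity of $x\mapsto x^+$, but you isolate these two ingredients explicitly and combine them in one stroke: since $\alpha-\beta=\gamma-\delta$ and the ReLU is non-decreasing, the two factors $\alpha^+-\beta^+$ and $\gamma^+-\delta^+$ inherit the same (weak) sign, hence their product is non-negative. The paper instead proceeds by a case analysis, reducing by symmetry to showing $X>0\implies Y\ge0$ and then splitting further on whether $(b-d)^+=0$; the identity $(a-b)-(c-d)=(a-c)-(b-d)$ appears only at the very end of the chain of inequalities. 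Your approach avoids all sub-cases and makes the structural reason for the inequality transparent; the paper's approach is more hands-on but arrives at the same conclusion with slightly more bookkeeping.
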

\begin{proof}
    Let $X \define (a-b)^+ - (c-d)^+$ and $Y \define (a-c)^+ - (b-d)^+$, so that our goal is to show
    that $XY \ge 0$, or equivalently that $X > 0 \implies Y \ge 0$ and $Y > 0 \implies X \ge 0$. By
    switching the roles of $b$ and $c$, it suffices to prove the first implication, \ie that $X > 0
    \implies Y \ge 0$.

    Suppose $X > 0$. Then $a - b > 0$, since otherwise we would have $(a-b)^+ = 0$ and hence $X \le
    0$. Therefore $a-b = (a-b)^+ > (c-d)^+ \ge c-d$. Moreover, if $(b-d)^+ = 0$ then $Y \ge 0$ and
    we are done, so we may assume that $b-d = (b-d)^+ > 0$. We conclude that
    \[
        Y
        = (a-c)^+ - (b-d)^+
        \ge (a-c) - (b-d)
        = (a-b) - (c-d)
        \ge 0 \,,
    \]
    where the first inequality holds since $(a-c)^+ \ge a-c$ while $(b-d)^+ = b-d$, and the second
    inequality holds since $a-b \ge c-d$.
\end{proof}

\end{document}